\definecolor{my-blue}{rgb}{0.0,0.0,0.6}
\definecolor{my-red}{rgb}{0.5,0.0,0.0}
\definecolor{my-green}{rgb}{0.0,0.5,0.0}
\definecolor{nicos-red}{rgb}{0.75,0.0,0.0}
\definecolor{light-gray}{gray}{0.6}
\definecolor{really-light-gray}{gray}{0.8}
\definecolor{sussexg}{rgb}{0.0,0.5,0.5}
\definecolor{sussexp}{rgb}{0.5,0.0,0.5}
\tikzset{cross/.style={cross out, draw=black, minimum size=2*(#1-\pgflinewidth), inner sep=0pt, outer sep=0pt},
cross/.default={2pt}}
\newtheorem{theorem}{\textcolor{my-red}{\sc Theorem}}[section]
\newtheorem{lemma}[theorem]{\textcolor{my-red}{\sc { Lemma}}}
\newtheorem{proposition}[theorem]{\textcolor{my-red}{{\sc Proposition}}}
\newtheorem{corollary}[theorem]{\textcolor{my-red}{{\sc Corollary}}}
\newtheorem{definition}[theorem]{\textcolor{my-red}{{\it Definition}}}
\numberwithin{equation}{section}
\theoremstyle{remark}
\newtheorem{remark}[theorem]{Remark}
\newcommand{\be}{\begin{equation}}
\newcommand{\ee}{\end{equation}}
\providecommand{\P}[1]{\langle#1\rangle}
\newcommand{\fl}[1]{\left\lfloor{#1}\right\rfloor}
\def\bE{\mathbb{E}}
\def\bN{\mathbb{N}}
\def\bP{\mathbb{P}}
\def\bR{\mathbb{R}}
\def\bZ{\mathbb{Z}}
\def\cW{\mathcal{W}}
\def\cN{\mathcal{N}}
\def\cE{\mathcal{E}}
\def\cS{\mathcal{S}}
\def\cB{\mathcal{B}}
\def\e{\varepsilon}
\def\om{\omega}
 \def\Z{\bZ}
\def\R{\bR}
\def\N{\bN}
\def\E{\bE}
\def\P{\bP} 
\def\sS{\mathscr{S}}
\newcommand{\dis}{\overset{\text{\textit{$\mathcal{D}$}}}{=}}
\DeclareMathOperator{\Var}{Var}   
\DeclareMathOperator{\Cov}{Cov}   
\begin{document}
\date{\today}
\title[Variance for discrete Hammersley] 
{Order of the variance in the discrete Hammersley process with boundaries}

\author[F.~Ciech]{Federico Ciech}
\address{Federico Ciech\\ University of Sussex\\ Department of  Mathematics \\ Falmer Campus\\ Brighton BN1 9QH\\ UK.}
\email{F.Ciech@sussex.ac.uk}
\urladdr{http://www.sussex.ac.uk/profiles/395447} 

\author[N.~Georgiou]{Nicos Georgiou}
\address{Nicos Georgiou\\ University of Sussex\\ Department of  Mathematics \\ Falmer Campus\\ Brighton BN1 9QH\\ UK.}
\email{N.Georgiou@sussex.ac.uk}
\urladdr{http://www.sussex.ac.uk/profiles/329373} 

\thanks{N. Georgiou was partially supported by the EPSRC First Grant EP/P021409/1: The flat edge in last passage percolation.}

\keywords{Last passage time, corner growth model, oriented percolation, last passage percolation Hammersley process, longest increasing subsequence, KPZ universality class, solvable models, flat edge.}
\subjclass[2000]{60K35}
\begin{abstract}
	We discuss the order of the variance on a lattice analogue of the Hammersley process with 
	boundaries, for which the environment on each site has independent, Bernoulli distributed values. 		The last passage time is the maximum number of Bernoulli points that can be collected on a 
	piecewise linear path, where each segment has strictly positive but finite slope.
	
	We show that along characteristic directions the order of the variance of the last passage time is of
	order $N^{2/3}$ in the model with boundary. These characteristic directions are restricted in 
	a cone starting at the origin, and along any direction outside the cone, 
	the order of the variance changes to $O(N)$ in the boundary model and to $O(1)$ 
	for the non-boundary model. This behaviour is the result of the two flat edges of the shape function. 
	
\end{abstract}

\maketitle

\section{Introduction} 

\subsection{Brief description of the model and framework} This paper studies fluctuations of a corner growth model that can be viewed as a discrete analogue of the Hammersley process \cite{hammersley1972few} or an independent analogue of the longest common subsequence (LCS) problem, introduced in \cite{chvatal1975longest}. 

The model under consideration was introduced in \cite{seppalainen1997increasing}. It is a directed corner growth model on the positive quadrant $\Z_+^2$. Each site $v$ of $\Z^2_+$ is assigned a random weight $\om_v$. The collection 
$\{\om_v\}_{v \in \Z_+^2}$ is the random environment and it is i.i.d.\ under the environment measure $\P$,  with Bernoulli marginals 
\[
\P\{ \om_v = 1\} = p,  \quad \P\{ \om_v= 0\} = 1 -p.
\] 
Throughout the article we exclude the values $p = 0$ or $ p =1$. One way to view the environment, is to treat site $v$ as \emph{present} when $\om_v = 1$ and as \emph{deleted} when $\om_v = 0$. With this interpretation, the longest strictly increasing Bernoulli path up to $(m,n)$ is a sequence of present sites 
\[
L_{m,n} = \{ v_1 = (i_1, j_1),  v_2 = (i_2, j_2),  \ldots,  v_M =(i_M, j_M)\}
\] 
so that $0 < i_1 < i_2 < \ldots < i_M \le m$ and $ 0 < j_1 < j_2 < \ldots < j_M \le n$ and so that if 
$\{w_1, w_2, \ldots, w_K\}$ is a different strictly increasing sequence of present sites, then it must be 
the case that $K \le M$. 

In this paper we cast the random variable $L_{m,n}$ as a last passage time, in the framework of \cite{georgiou2016variational}. With the previous description, a step of a potential optimal path up to $(m,n)$ can take one of $O(m,n)$ values - any site is accessible as long as it has strictly larger coordinates from the previous site. However, any integer vector of positive coordinates can be written as a linear combination of $e_1, e_2$ and $e_1+e_2$ steps. Our set of admissible steps is then restricted to $\mathcal{R} = \{ e_1, e_2, e_1+e_2\}$ and an admissible path from $(0,0)$ to $(m,n)$ is an ordered sequence of sites 
\[
\pi_{0, (m,n)} = \{0 = v_0, v_1, v_2, \ldots, v_M = (m,n)\},
\]
so that $v_{k+1} - v_k \in \mathcal R$. The collection of all these paths is denoted by $\Pi_{0, (m,n)}$.
In order to obtain the same variable $L_{m,n}$ over this set of paths as the one from only strictly increasing steps, we need to specify a measurable potential function $V(\om, z):\R^{\Z^2_+}\times\mathcal R\to\R$ defined
\be\label{eq:potential}
V(\om, z) = \om_{e_1 + e_2}1\!\!1\{ z = e_1+e_2\}.
\ee
This way, the path $\pi$ will collect the Bernoulli weight at site $v$ if and only there exists a $k$ such that 
$v_{k+1} = v$ and  $v_{k} = v - e_1 - e_2$. No gain can be made through a horizontal or vertical step. Using this potential function $V$ we define the last passage time as 
\be\label{eq:lptV}
 G^V_{0,(m,n)} = \max_{\pi_{0,(m,n)} \in \Pi_{0,(m,n)}} \bigg\{ \sum_{ v_i \in \pi} V(T_{v_i}\om, v_{i+1}-v_i) \bigg\}.
 \ee
Above we used $T_{v_i}$ as the environment shift by $v_i$ in $\Z^2_+$. Now one can see that $G^V_{0,(m,n)} = L_{m,n}$.   

The law of large numbers for $G^V_{0,(m,n)}$ was first obtained in \cite{seppalainen1997increasing} by first obtaining invariant distributions for an embedded totally asymmetric particle system. It is precisely this methodology that invites the characterization `discrete Hammersley process' as the particle system can be viewed as a discretized version of the Aldous-Diaconis process \cite{aldous1995hammersley} which find the law of large numbers limit for the number of Poisson(1) points that can be collected from a strictly increasing path in $\R^2_+$. 

The original problem is mentioned as Ulam's problem in the literature and it was about the limiting law of large numbers for the longest increasing subsequence of a random permutation of the first $n$ numbers, denoted by $I_n$. Already in \cite{Erd-Sze-35} it was shown that $I_n \ge \sqrt{n}$ and an elementary proof via a pigeonhole argument can be found in \cite{hammersley1972few}. This gave the correct scaling and it was proven in \cite{Log-She-77, Ver-Ker-77}  that limiting constant is 2. Then the combinatorial arguments of these papers where changed to softer probabilistic arguments in \cite{aldous1995hammersley, Sep-96, Gro-01} where the full law of large numbers was obtained for sequence of increasing Poisson points.  


For the discrete Hammersley the law of large numbers for the point-to-point shape function $g^{(p)}_{pp}(s,t)$ was computed in \cite{seppalainen1997increasing} to be
\be\label{eq:}
g^{(p)}_{pp}(s,t) = \lim_{n\to \infty} \frac{ G^V_{0,(\fl{ns},\fl{nt})}}{n}
=\begin{cases}
s, \qquad &t\geq \frac{s}{p},\\
\frac{1}{1-p}\big(2\sqrt{pst}-p(t+s)\big), \qquad &ps\leq t<\frac{s}{p},\\
t, \qquad& t\leq ps.
\end{cases}
\ee
This is a concave, symmetric, 1-homogeneous differentiable function which is continuous up to the boundaries of $\R^2_+$ and it was the first completely explicit shape function for which strict concavity is not valid. In fact, the formula indicates two flat edges, for $t > s/p$ or $t < ps$.  

The argument used in  \cite{seppalainen1997increasing} to obtain the formula in directions of the flat edge can also be used in an identical way to obtain the law of large numbers in the same direction for the much more correlated LCS model \cite{chvatal1975longest}. Comparisons between the discrete Hammersley and the LCS are tantilizing. The Bernoulli environment $\eta = \{\eta_{i,j}\}$ for the LCS model is uniquely determined by two infinite random strings 
${\bf x} = (x_1, x_2, \ldots)$ and  ${\bf y} = (y_1, y_2, \ldots)$ where each digit is uniformly chosen from a $k$-ary alphabet (i.e.\ $x_i, y_j \in\{1, 2, \ldots, k \}$). Then the environment $\eta_{i,j} = \mathbbm1\{ x_i = y_j\}$ and it takes the value $1$ with probability $p = 1/k$. 
The random variable $\mathcal L_{n,n}^{(k)}$ represents the longest increasing sequence of Bernoulli points in this environment, which corresponds to the longest common subsequence between the two words, of size $n$. The limit $c_k = \lim_{n \to \infty} n^{-1} \mathcal L_{n,n}^{(k)} $ is called in the literature as the the Chvatal-Sankoff constant, and it was already observed in \cite{seppalainen1997increasing} that  $g^{(1/k)}_{pp}(1,1)$ of the discrete Hammersley lies between the known computational upper and lower bounds for $c_k$. 
 
A formal connection between the  discrete Hammersley, LCS and Hammersley models arises in the small $p$ (large alphabet size $k$) limit.  Sankoff and Mainville conjectured in \cite{sankoff1983common} that 
\[\lim_{k\to\infty}\frac{c_k}{\sqrt{k}}=2.\]
For the discrete Hammersley model this is an immediate computation in \eqref{eq:} for $p= 1/k$ when we change $c_k$ with $g^{(1/k)}_{pp}(1,1)$. For the LCS, this was proven in \cite{kiwi2005expected}. The value 2 is the limiting law of large numbers value for the longest increasing sequence of Poisson points in $\R^2_+$. 

\subsection{Solvable models of lattice last passage percolation and KPZ exponents}

Identifying the explicit shape function is the first step in computing fluctuations and scaling limits for last passage time quantities. When precise calculations can be performed and explicit scaling laws can be computed the model is classified as an explicitly solvable model of last passage percolation. There is only a handful of these models, and each one requires individual treatment.
 
In \cite{Bai-Dei-Joh-99} it is proven that the fluctuations around the mean of the LIS of $n$ numbers are of order $n^{1/6}$ and the scaling limit is a Tracy-Widom distribution using a determinantal approach. The fluctuation exponent $1/3$ is often used to associate a model to the Kardar-Parisi-Zhang (KPZ) class (see \cite{corwin2012kardar} for a review), and determinental/combinatorial approaches were developed for a variety of solvable growth models in order to compute among other things explicit weak limits and formulas for Laplace transforms of last passage times and polymer partition functions. Lattice examples include the corner growth model with i.i.d.\ geometric weights, (admissible steps $e_1, e_2$) \cite{Joh-00}, the log-gamma polymer \cite{Bor-Cor-Rem-13, Cor-etal-14}, introduced in \cite{Sep-12}, the Brownian polymer \cite{Mor-OCo-07, Sep-Val-10}, the strict-weak lattice polymer \cite{corwin2015strict, OCo-Ort-14b}, the random walk in a beta-distributed random potential, where the zero-temperature limit is the Bernoulli-Exponential first passage percolation \cite{Bar-Cor-15-}. The result of \cite{Joh-00} was also used to derive explicit formulas for the discrete Hammersley {\cite{priezzhev2008exact} with no boundaries via a particle system coupling.


It is expected that under some minimal moment conditions the order of fluctuations of $1/3$ of the last passage time or the polymer partition function is environment-independent. A general theory that is a step towards universality can be found at the law of large numbers level \cite{georgiou2016variational, rassoul2014quenched, rassoul2013quenched, rassoul2017variational} where a series of variational formulas for the limiting free energy density of polymer models and shape functions for last passage percolation where proven. A variational formula for the time constant in first passage percolation was proven in  \cite{Kri-15-}. For two-dimensional last passage models with $e_1, e_2$ admissible steps the analysis and results can be sharpened; early universal results on the shape near the edge were obtained in \cite{Martin2004limiting, bodineau2005universality}. A general approach and a range of results including solutions to the variational formulas and existence of directional geodesics using invariant boundary models were developed via the use of cocycles in \cite{Geo-Ras-Sep-15b-} and \cite{Geo-Ras-Sep-15a-}. Similar techniques are utilized in the present article, since we prove the existence of an invariant boundary model for the discrete Hammersley.
 
A more probabilistic approach to estimate the order of the variance (but not the explicit scaling limit), was developed in \cite{Cat-Gro-06} and \cite{groeneboom2002hydrodynamical} where by adding Poisson distributed `sinks' and `sources' on the axes, they could create invariant versions of the model. For the discrete Hammersley, an invariant model with sinks and sources has been described in \cite{basdevant2015discrete} and it was used to re-derive the law of large numbers for $G^V_{0,(m,n)}$. In the present article we show another way to use boundaries on the axes and create invariant boundary models. Our approach is similar to those in \cite{balazs2006cube, Sep-12, Sep-Val-10} where a Burke type property is first proven for the model with boundary and then exploited to obtain the order of fluctuations. 

\subsection{The flat edge in lattice percolation models} 

The discrete Hammersley is a model for which the shape function $g_{pp}(s,t)$
exhibits two flat edges, for any value of $p$. Flat edge in percolation is not uncommon. A flat edge for the contact process was observed in \cite{durrett1981shape} and \cite{durrett1983supercritical}. A simple explicitly solvable first passage (oriented) bond percolation model introduced in \cite{Sep-98-aop-2} allows for an exact derivation of the limiting shape function and it also exhibits a flat edge. In this model the random weight was collected only via a horizontal step, while vertical steps had a deterministic cost. For the i.i.d.\ oriented bond percolation where each lattice edge admits a random Bernoulli weight, a flat edge result for the shape was proved in \cite{durrett1984oriented} when the probability of success $p$ is larger than some critical value and percolation occurs. This was later extended in \cite{Mar-02} where further properties were derived. In \cite{Auf-Dam-13} differentiability at the shape at the edge of flat edge was proven. 

These properties for oriented bond percolation can be transported to oriented site percolation and further extended to corner growth models when the environment distribution has a percolating maximum. For a general treatment to this effect, for non-exactly solvable models, see Section 3.2 in \cite{Geo-Ras-Sep-15a-}. For directed percolation in a correlated environment, a shape result with flat edges can be found in \cite{Emr-16}. 

Local laws of large numbers of the passage time near the flat edge of the discrete Hammersley model can be found in \cite{georgiou2010soft}. This work was later extended in \cite{georgiou2016optimality}, where limiting Tracy-Widom laws were obtained in special cases, using also the edge results of \cite{bodineau2005universality}. These `edge results' are for the last passage time in directions that are below the critical line $(n, n/p)$ and into the concave region of $g_{pp}$ by a mesoscopic term of $n^a$, $0< a < 1$. When $a > 1/2$ the order of the fluctuations is between $O(n^{1/3})$ and $O(1)$. In the present article we further prove that in directions above the critical line (in the flat edge of $g_{pp}$) the variance of the passage time is bounded above by a constant that tends to 0 (see Section \ref{sec:noboundary}).

\subsection{Structure of the paper}
The paper is organised as follows:
 In Section \ref{sec:model} we state our main results after describing the boundary model. 
 In Section \ref{sec:burke} we prove Burke's property for the invariant boundary model 
  and compute the solution to the variational formula that gives the law of large numbers for the shape function of the model without boundaries. 
  The main theorem of this paper is the order of the variance of the model with boundaries in characteristic directions. 
  The upper bound for the order can be found in Section \ref{sec:varub}. The lower bound is proven in Section \ref{sec:varlb}.
  For the order of the variance in off-characteristic directions see Section \ref{sec:offchar} and for the results for the model with no boundaries, including the order of the variance in directions in the flat edge see Section \ref{sec:noboundary}.
 Finally, in Section \ref{sec:paths} we prove the path fluctuations in the characteristic direction, again in the model with boundaries.

\subsection{Common notation}
Throughout the paper, $\N$ denotes the natural numbers, and $\Z_+$ the non-negative integers. When we write inequality between two vectors $v = (k, \ell) \le  w = (m,n)$ we mean $k \le m$ and $\ell \le n$. We reserve the symbol $G$ for last passage times. We omit from the notation the superscript $V$ that was used to denote the dependence of potential function in \eqref{eq:lptV}, since for the sequence we fix $V$ as in \eqref{eq:potential}, unless otherwise mentioned. The symbol $\pi$ is reserved for a generic admissible path.

\section{The model and its invariant version} 
\label{sec:model}
%
 
 \subsection{The invariant boundary model} The boundary model has altered distributions of weights on the two axes. The new environment there will depend on a parameter $u \in (0,1)$ that will be under our control. Each $u$ defines different boundary distributions. At the origin we set  $\om_0 = 0$.
For weights on the horizontal axis, for any $k \in \N$ we set
$
\om_{ke_1} \sim \text{Bernoulli}( u ), 
$
with independent marginals 
\be \label{eq:xaxis}
\P\{ \om_{ke_1} = 1\} = u= 1 - \P\{ \om_{ke_1} = 0\}.
\ee
On the vertical axis, for any $k \in \N$, we set 
$\om_{ke_2} \sim \text{Bernoulli}\Big( \frac{p(1-u)}{u + p(1-u)} \Big)$ with independent marginals 
\be\label{eq:yaxis} 
\P\{ \om_{ke_2} = 1 \} 
=  \frac{p(1-u)}{u + p(1-u)} =  1 - \P\{ \om_{ke_2} = 0\}.
\ee
The environment in the bulk $\{ \om_w\}_{w \in \N^2}$ remains unchanged with i.i.d.\ Ber($p$) marginal distributions. 
Denote this environment by $\om^{(u)}$ to emphasise the different distributions on the axes that depend on $u$. 

In summary, for any $i \ge 1, j \ge 1$, the $\om^{(u)}$ marginals are independent under a background environment measure $\P$ with marginals
\be \label{eq:omu}
\om^{(u)}_{i,j} \sim
\begin{cases}
\text{Ber}(p), &\text{ if } (i,j) \in \N^2,\\
\text{Ber}(u), &\text{ if }  i \in \N, j =0,\\
\text{Ber}\Big(\frac{p(1-u)}{u + p(1-u)} \Big), &\text{ if }  i =0, j \in \N, \\
0, &\text{ if }  i =0, j =0.
\end{cases}
\ee

In this environment we slightly alter the way a path can collect weight on the boundaries. Consider any path $\pi$ from $0$. If the path moves horizontally before entering the bulk,  then it collects the Bernoulli($u$) weights until it takes the first vertical step, and after that, it collects weight according to the potential function  \eqref{eq:potential}. If $\pi$ moves vertically from $0$ then it \textbf{also} collects the Bernoulli weights on the vertical axis, and after it enters the bulk, it collects according to \eqref{eq:potential}. 

Fix a parameter $u \in (0, 1)$. Denote the last passage time from 0 to $w$ in environment $\om^{(u)}$ by $G^{(u)}_{0, w}$. The variational equality, using the above description, is 
\begin{align}
G^{(u)}_{0, w} &= \max_{1\le k \le w\cdot e_1} \Big\{ \sum_{i=1}^k \om_{i e_1} + G_{ke_1+e_2, w} \Big\} \nonumber\\
&\phantom{xxxxxxxxxxxxx}\bigvee \max_{1\le k \le w\cdot e_2} \Big\{ \sum_{j=1}^k \om_{je_2}  + \om_{e_1+ke_2}+G_{e_1+ke_2, w} \Big\}. \label{eq:varform}
\end{align}

Our two first statements give the explicit formula for the shape function. 

  \begin{theorem}\label{thm:LLNG}[Law of large numbers for $G^{(u)}_{\fl{Ns}, \fl{Nt}}$]
	For fixed parameter $0 < u \le 1$ and $(s,t) \in \R^2_+$ we have 
	\be
	\lim_{N\to \infty} \frac{G^{(u)}_{\fl{Ns}, \fl{Nt}}}{N} = su + t\, \frac{p(1-u)}{ u + p(1-u) } , \quad  \P -a.s.
	\ee 
\end{theorem}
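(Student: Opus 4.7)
The plan is to prove matching lower and upper bounds; the lower bound comes out of the variational formula \eqref{eq:varform} together with the LLN for the bulk shape function $g^{(p)}_{pp}$, while the upper bound relies on the Burke-type invariance established in Section \ref{sec:burke}.

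\textbf{Lower bound.} Apply \eqref{eq:varform} with $k=\fl{N\alpha}$ for fixed $\alpha\in(0,s)$ to obtain
\[
G^{(u)}_{0,\fl{Ns}e_1+\fl{Nt}e_2}\ \geq\ \sum_{i=1}^{\fl{N\alpha}}\om_{ie_1}\ +\ G_{\fl{N\alpha}e_1+e_2,\,\fl{Ns}e_1+\fl{Nt}e_2}.
\]
The first sum, a partial sum of i.i.d.\ Bernoulli$(u)$ variables, has normalized a.s.\ limit $u\alpha$ by the SLLN. The second term is a bulk last passage time without boundaries and, after translation to the origin, the shape theorem recalled in the introduction gives $N^{-1}G_{\ldots}\to g^{(p)}_{pp}(s-\alpha,t)$ a.s.\ along any countable dense set of $\alpha$. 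A symmetric argument using the vertical branch of \eqref{eq:varform} yields the analogue with parameter $v=\tfrac{p(1-u)}{u+p(1-u)}$. Differentiating the explicit formula for $g^{(p)}_{pp}$ in its strictly concave region, the interior critical point (whenever it lies in $(0,s)$) produces the value $su+tv$; in the remaining directions the vertical branch covers the gap, giving $\liminf_N N^{-1}G^{(u)}_{\fl{Ns},\fl{Nt}}\geq su+tv$.

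\textbf{Upper bound.} The Burke-type property to be proved in Section \ref{sec:burke} will show that in the $u$-boundary model the horizontal increments of $G^{(u)}$ along any horizontal line remain i.i.d.\ Bernoulli$(u)$, and the vertical increments along any vertical line remain i.i.d.\ Bernoulli$(v)$. Telescoping $G^{(u)}_{m,n}$ along the left edge and then the top edge then produces the exact expectation
\[
\bE\bigl[G^{(u)}_{m,n}\bigr]\ =\ mu+nv.
\]
Since $G^{(u)}_{m,n}$ is a function of i.i.d.\ bounded Bernoullis with the property that flipping any single weight changes the value by at most $1$, the Efron--Stein inequality gives $\Var(G^{(u)}_{m,n})\leq C(m+n)$. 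Chebyshev plus Borel--Cantelli along a polynomial subsequence $N_k=k^2$, combined with monotonicity of $G^{(u)}$ in its endpoint to fill in the gaps, upgrades this to the almost-sure bound $\limsup_N N^{-1}G^{(u)}_{\fl{Ns},\fl{Nt}}\leq su+tv$, matching the lower one.

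\textbf{Main obstacle.} The real work is concentrated in the Burke property itself: one must verify that the joint distribution of a single unit cell (left weight, bottom weight, interior weight) is preserved under the local flip that reads off the top and right increments, and this is precisely what pins down the specific Bernoulli parameters $u$ and $\tfrac{p(1-u)}{u+p(1-u)}$ on the two axes. Once this is in hand every other step above is routine. A secondary point of care lies in the off-characteristic directions, where the interior critical point of the variational sup falls outside $[0,s]$; there one must check that the vertical branch of \eqref{eq:varform} achieves the matching value $su+tv$, so that the two branches together cover all of $\R^2_+$.
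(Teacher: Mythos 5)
Your core mechanism is the paper's: telescope $G^{(u)}_{m,n}=\cW+\cN$ up the west edge and across the north edge, and use the Burke property (Corollary \ref{cor:downrightpath}) to identify the west increments as i.i.d.\ Bernoulli$\big(\tfrac{p(1-u)}{u+p(1-u)}\big)$ and the north increments as i.i.d.\ Bernoulli$(u)$; the paper then handles the fact that the north summands change with $N$ by a large deviation estimate plus Borel--Cantelli, whereas you propose a variance bound, Chebyshev along $N_k=k^2$, and monotonicity in the endpoint. That alternative is legitimate in outline, but two steps need repair as written.

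First, the Efron--Stein step does not give what you claim: with one independent coordinate per lattice weight and bounded-difference constant $1$, Efron--Stein yields only $\Var(G^{(u)}_{m,n})\le C\,mn=O(N^2)$, which makes the subsequent Chebyshev/Borel--Cantelli step vacuous (the deviation probability at scale $\e N$ is then only $O(1)$). To get $O(m+n)$ you must either group the coordinates into blocks, namely each bulk column together with its south-boundary weight (any admissible path collects at most one weight per such column, so resampling a whole column changes $G^{(u)}$ by at most $1$) plus the $n$ individual west-boundary weights, or, simpler and already at hand from the very Burke property you invoke, note that $\Var(G^{(u)}_{m,n})=\Var(\cW+\cN)\le 2\Var(\cW)+2\Var(\cN)\le (m+n)/2$ because each of $\cW,\cN$ is a sum of independent Bernoulli increments. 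Second, in your lower bound the claimed a.s.\ convergence of $N^{-1}G_{\fl{N\alpha}e_1+e_2,(\fl{Ns},\fl{Nt})}$ does not follow from the shape theorem quoted in the introduction, which concerns passage times from a fixed origin: here the starting point moves with $N$, so a priori one only gets convergence in probability. (This is exactly the point where the paper, in the proof of Theorem \ref{thm:LLNp}, passes to a subsequence -- legitimate there because the a.s.\ limit of the left-hand side is already supplied by Theorem \ref{thm:LLNG}, which is what you are trying to prove, so you cannot borrow that device.) This half of your argument is, however, dispensable: once the variance bound is repaired, your Chebyshev--Borel--Cantelli--monotonicity sandwich is two-sided, since $\E[G^{(u)}_{\fl{Ns},\fl{Nt}}]=\fl{Ns}u+\fl{Nt}\tfrac{p(1-u)}{u+p(1-u)}$ is exact, and it yields the full a.s.\ limit on its own; the variational-formula lower bound (which also quietly inverts the Legendre-duality logic of Theorem \ref{thm:LLNp}, harmless only because $g_{pp}$ is known from prior work) can simply be dropped.
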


\begin{theorem}[\cite{seppalainen1997increasing}, \cite{basdevant2015discrete}]
\label{thm:LLNp}
Fix $p$ in $(0,1)$ and $(s,t) \in \R^2_+$. Then we have the explicit law of large numbers limit
\begin{align}
\lim_{N \to \infty} \frac{G_{\fl{Ns}, \fl{Nt}}}{N} &= \inf_{0< u \le 1}\{ s \E(\om^{(u)}_{1,0}) + t \E(\om^{(u)}_{0,1})\} \notag\\
&=\begin{cases}
s, \qquad &t\geq \frac{s}{p}\\
\frac{1}{1-p}\big(2\sqrt{pst}-p(t+s)\big), \qquad &ps\leq t<\frac{s}{p}\\
t, \qquad& t\leq ps.
\end{cases}\label{eq:busopt}
\end{align}
\end{theorem}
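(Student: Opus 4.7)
The strategy is to obtain the LLN from Theorem \ref{thm:LLNG} via a variational argument. First I would establish that $g(s,t) := \lim_N G_{\lfloor Ns\rfloor,\lfloor Nt\rfloor}/N$ exists as a concave, 1-homogeneous function of $(s,t)$: this is a direct application of Kingman's subadditive ergodic theorem, since the potential $V$ is bounded, the bulk environment is i.i.d., and the superadditivity $G_{0,v+w} \ge G_{0,v} + G_{v,v+w}$ holds. The substance of the theorem is then the identity $g(s,t) = \inf_{0<u\le 1}\{su + t\tilde u\}$, with $\tilde u := p(1-u)/(u+p(1-u))$, together with the explicit computation of the infimum.

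For the upper bound $g(s,t) \le su + t\tilde u$ for every $u$, I would use the variational formula \eqref{eq:varform} with $k=1$ to get $G^{(u)}_{0,(m,n)} \ge \om^{(u)}_{e_1} + G_{e_1+e_2,(m,n)}$. A path-prepending comparison shows $G_{0,(m,n)} \le G_{e_1+e_2,(m,n)} + 1$ deterministically: any no-boundary path starting diagonally at $(1,1)$ can be detoured as $0 \to e_1 \to (1,1)$, which is a boundary-model path whose score differs by at most one Bernoulli weight. Dividing by $N$ and applying Theorem \ref{thm:LLNG} on the left yields $g(s,t) \le su + t\tilde u$ for every $u \in (0,1]$, hence $g(s,t) \le \inf_u\{su + t\tilde u\}$.

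The matching lower bound is the main difficulty. For $(s,t)$ in the strictly concave region $ps<t<s/p$, my plan is to pick $u^* \in (0,1)$ as the unique minimizer of the convex function $u \mapsto su + t\tilde u$, so that by construction $(s,t)$ is the characteristic direction of the $u^*$-boundary model. I would then show that in this boundary model the exit point from the axis is asymptotically at the origin, i.e.\ the argmax $k^*$ in \eqref{eq:varform} satisfies $k^*/N \to 0$ in probability. Using monotonicity of $G_{v,(m,n)}$ in the starting point $v$, the variational formula gives the reverse comparison $G^{(u^*)}_{0,(m,n)} \le \sum_{i=1}^{k^*}\om^{(u^*)}_{ie_1} + G_{0,(m,n)}$; combined with $k^*/N \to 0$ and Theorem \ref{thm:LLNG}, this produces $su^* + t\tilde u^* \le g(s,t)$, closing the loop. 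For the flat-edge directions $t \ge s/p$ or $t \le ps$, the minimizer lies at the boundary $u=1$ or $u\to 0^+$, and a direct argument (the path must be essentially horizontal or vertical in the large-$N$ limit) yields $g(s,t) \ge s$ or $g(s,t) \ge t$ respectively.

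The main obstacle is the exit-point concentration at the characteristic parameter $u^*$, i.e.\ proving $k^*/N \to 0$ in probability. This is the heart of the Seppäläinen-school approach of \cite{Sep-12, balazs2006cube}, and would rely on the Burke property developed in Section \ref{sec:burke} together with the strict convexity of $u \mapsto su + t\tilde u$ around $u^*$. Once the variational identity is established, the explicit piecewise formula in \eqref{eq:busopt} is a calculus exercise: the derivative of $su + t\tilde u$ in $u$ is $s - tp/(u+p(1-u))^2$, so the interior critical point in $ps<t<s/p$ is $u^* = (\sqrt{tp/s}-p)/(1-p)$, giving the middle branch $(2\sqrt{stp}-p(s+t))/(1-p)$; in the flat regimes the infimum is attained at the endpoints with values $s$ and $t$.
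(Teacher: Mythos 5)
Your plan is a genuinely different route from the paper's, and the difference matters. The paper never needs exit-point concentration: it scales the variational identity \eqref{eq:varform} into the macroscopic formula \eqref{eq:maxpath}, namely $g^{(u)}_{pp}(1,1)=\sup_{0\le z\le 1}\{z\E(I^{(u)})+g_{pp}(1-z,1)\}\vee\sup_{0\le z\le 1}\{z\E(J^{(u)})+g_{pp}(1,1-z)\}$ (proved by a coarse-graining upper bound and a fixed-$z$ lower bound), and then, writing $\psi(x)=g_{pp}(x,1)$, reads this as the statement $\psi^*(u)=-\frac{p(1-u)}{u+p(1-u)}$ and invokes concave biduality $\psi=\psi^{**}$ to get $g_{pp}(x,1)=\inf_{0<u\le1}\{x\E(I^{(u)})+\E(J^{(u)})\}$ in \emph{all} directions at once, flat edges included; only Theorem \ref{thm:LLNG}, homogeneity and concavity of $g_{pp}$ enter. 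Your upper bound ($g\le su+t\tilde u$ for every $u$, with $\tilde u=p(1-u)/(u+p(1-u))$, via dominating the no-boundary passage time by the boundary one) is correct and cheap, and your calculus for the minimizer $u^*=(\sqrt{tp/s}-p)/(1-p)$ agrees with the paper's.

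The gap is the lower bound. In your scheme the entire content of the theorem sits in the claim that the exit point satisfies $k^*/N\to 0$ in probability at the characteristic parameter $u^*$, and you only gesture at it ("Burke property plus strict convexity"). This is not a routine technicality: at LLN scale the natural route to it is either through the macroscopic variational formula itself (which is essentially the paper's argument, so you would be going in a circle) or through quantitative exit-point estimates of the type this paper develops only later, in Section \ref{sec:varub} (Proposition \ref{yextibound}, Corollary \ref{cor:CC}), using the full variance/comparison machinery — far heavier than the LLN requires. A soft, non-circular version does exist (couple the $u^*$- and $u'$-boundary models for $u'$ near $u^*$, use the exact boundary means supplied by Lemma \ref{burke}/Corollary \ref{cor:downrightpath}; at the characteristic direction the first-order terms in $\E G^{(u')}-\E G^{(u^*)}$ cancel, so a macroscopic exit point would yield a gain linear in $|u'-u^*|$ against a quadratic budget), but your proposal does not actually supply this argument, so as written the lower bound is unproven. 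Separately, your flat-edge step is not an argument: "the path must be essentially horizontal or vertical" does not produce the needed chain of $\sim Ns$ marked sites when $t\ge s/p$. It is easily repaired — once the middle branch is known, monotonicity gives $g(s,t)\ge g(s,s/p)=s$ while trivially $g(s,t)\le\min(s,t)=s$, and symmetrically for $t\le ps$ — but you should say this (or follow the block construction of \cite{seppalainen1997increasing}) rather than appeal to the shape of the path; the paper's duality route gets these directions for free.
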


The main theorems of this article verify with probabilistic techniques the variance of $G^{(u)}$ along deterministic directions. For a given boundary parameter $u$, there will exist a unique direction $(m_u, n_u)$ along which the last passage time at point $N(m_u, n_u)$ time will have variance of order $O(N^{2/3})$ for large $N$. That is what we call the characteristic direction. The form of the characteristic direction will become apparent from the variance formula in Proposition \ref{lem:exit}; it is precisely the direction for which the higher order variance terms cancel out. As it turns out, the characteristic direction ends up being 
\begin{equation}\label{eq:bound}
(m_{u}(N), n_{u}(N))=\left(N, \Big\lfloor \frac{N}{p}\big(p + (1-p)u\big)^2\Big\rfloor\right).
\end{equation}
Throughout the paper we will often compare last passage times over two different boundaries that have different characteristic directions. For this reason we explicitly denote the parameter in the subscript.

Note that as $N \to \infty$, the scaled direction converges to the macroscopic characteristic direction
\be\label{eq:chardirmacro}
N^{-1}(m_{u}(N), n_{u}(N)) \to \Big( 1, \frac{\big(p+(1-p)u\big)^2}{p}\Big),
\ee
which gives that for large enough $N$ the endpoint $(m_{u}(N), n_{u}(N))$ is always between the two critical lines $y = \frac{x}{p}$ and $y=px$ that separate the flat edges from the strictly concave part of $g_{pp}$.This defines the macroscopic set of characteristic directions
\[
\frak J_p = \Big\{ \Big(1,  \frac{\big(p+(1-p)u\big)^2}{p}\Big): u \in (0,1) \Big\}. 
\]
Note that any $(s,t) \in\R^2_+$ for which $(1,ts^{-1} ) \in \frak J_p$, the shape function $g_{pp}$ has a strictly positive curvature at $(s,t)$.

\begin{theorem}\label{thm:varub} Fix a parameter $u \in (0,1)$ and let $(m_u, n_u)$ the characteristic direction corresponding to $u$ as in \eqref{eq:chardirmacro} and large scale approximation,  $(m_u(N), n_u(N))$ as in \eqref{eq:bound}. Then there exists constants $C_1$ and $C_2$ that depend on $p$ and $u$ so that 
\be
C_1 N^{2/3} \le \Var( G^{(u)}_{m_u(N), n_u(N)}) \le C_2N^{2/3}.
\ee 
\end{theorem}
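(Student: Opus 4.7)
I would follow the Cator--Groeneboom/Bal\'azs--Cator--Sepp\"al\"ainen scheme, adapted to the discrete Hammersley. The first ingredient is a \emph{variance identity} derived from the Burke property (Section \ref{sec:burke}). Let $Z_1$ and $Z_2$ denote the exit points of the geodesic from the horizontal and vertical axes in the stationary environment $\om^{(u)}$. A telescoping-sum argument applied to the variational equation \eqref{eq:varform}, together with the independence of increments along down-right staircases provided by Burke, should yield an identity of the schematic form
\begin{equation*}
\Var\bigl(G^{(u)}_{m,n}\bigr) \;=\; \sigma_h^2(u)\bigl(m - 2\,\E[Z_1]\bigr) \;+\; \sigma_v^2(u)\bigl(2\,\E[Z_2] - n\bigr),
\end{equation*}
where $\sigma_h^2(u)=u(1-u)$ and $\sigma_v^2(u)=\frac{pu(1-u)}{(u+p(1-u))^2}$ are the variances of the boundary weights. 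The characteristic direction \eqref{eq:bound} is chosen precisely so that $\sigma_h^2(u)\,m_u(N) - \sigma_v^2(u)\,n_u(N) = O(1)$; combined with the (a priori) obvious $\E[Z_i]\le m_u(N),n_u(N)$, the identity then gives $\Var(G^{(u)}_{m_u(N),n_u(N)}) \asymp \E[Z_1]\asymp \E[Z_2]$. This reduces the theorem to proving the two-sided bound $cN^{2/3}\le \E[Z_1(m_u(N),n_u(N))] \le CN^{2/3}$.

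\textbf{Upper bound.} Couple the stationary environment $\om^{(u)}$ with a perturbed one $\om^{(u')}$ at $u' = u - \delta N^{-1/3}$, using common bulk weights and an ordering on the axes. Monotonicity of the variational formula \eqref{eq:varform} in the horizontal boundary weights gives a pointwise inequality bounding $G^{(u)}-G^{(u')}$ from below by a term linear in $Z_1^{(u)}$. Taking expectations and using Theorem \ref{thm:LLNG} to compute each mean explicitly (both are differentiable functions of $u$) yields $\E[Z_1]\le C\,\delta^{-1}N^{2/3}$ for $\delta$ of order unity. An analogous perturbation with $u_+ = u + \delta N^{-1/3}$ gives the matching bound for $\E[Z_2]$, so $\Var(G^{(u)}_{m_u(N),n_u(N)}) \le C_2 N^{2/3}$.

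\textbf{Lower bound.} Here I would argue by contradiction: if $\E[Z_1]= o(N^{2/3})$, then the variance identity forces $\Var(G^{(u)}_{m_u(N),n_u(N)}) = o(N^{2/3})$. Pairing this with a second coupling to $\om^{(u')}$ at $u'=u - \delta N^{-1/3}$, Theorem \ref{thm:LLNG} predicts a mean gap of order $\delta N^{2/3}$ between the stationary LPTs (through the derivative of the shape in $u$). Chebyshev applied to $G^{(u)} - G^{(u')}$, combined with the coupling bound $G^{(u')}\le G^{(u)} + (\text{boundary correction})$, would then force a concentration stronger than what subadditive decoupling allows, producing the contradiction and hence $\E[Z_1]\ge c N^{2/3}$. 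The identity then gives $\Var \ge C_1 N^{2/3}$.

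\textbf{Main obstacle.} The most delicate step is the precise derivation of the variance identity and the matching of its cancellation condition with \eqref{eq:bound}. The admissible step set $\mathcal R = \{e_1,e_2,e_1+e_2\}$ and the asymmetric rule for collecting weights on the two axes described after \eqref{eq:omu} create bookkeeping subtleties when summing increments along staircases, subtleties absent in the exponential corner growth model of \cite{balazs2006cube}. One must verify that the independence of increments along staircase paths produced by Burke survives these asymmetries, so that the $O(N)$ terms cancel exactly at the direction \eqref{eq:bound}. A secondary technical point is that the perturbation scale $\delta N^{-1/3}$ must keep both perturbed parameters $u\pm \delta N^{-1/3}$ uniformly in $(0,1)$, which forces extra care when $u$ is close to $0$ or $1$ and hence when the macroscopic characteristic direction approaches the boundary of $\frak J_p$.
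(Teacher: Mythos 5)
Your high-level scheme (Burke property, a variance formula tied to exit points, boundary perturbations at scale $N^{-1/3}$, cancellation at the characteristic direction \eqref{eq:bound}) is the right one, but two of your load-bearing steps do not hold as stated. First, the exact identity $\Var(G^{(u)}_{m,n})=\sigma_h^2(m-2\E Z_1)+\sigma_v^2(2\E Z_2-n)$ is not available in this model: what one gets from the cocycle decomposition is $\Var(G^{(u)}_{m,n})=n\sigma_v^2-m\sigma_h^2+2\Cov(\cN,\cS)$ (Proposition \ref{lem:exit}), and because the weights are Bernoulli the covariance has no closed form in terms of the exit point. Relating it to exit-point quantities requires the $\e\to0$ perturbation of the boundary law together with control of the event that the maximizer switches under the perturbation (the $\e^{3/2}$ and $\e^{4/3}$ error terms in \eqref{eq:exn-n1}--\eqref{eq:exn-m}), and the quantity that actually appears is $\E\big[\sum_{i\le\xi_{e_1}}(1-\om_{i,0})\big]$ (see \eqref{eq:b-om}), not $\E[\xi_{e_1}]$ itself. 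Relatedly, your upper-bound step ``take expectations of a pointwise inequality linear in $Z_1$'' ignores that $Z_1$ is a function of the very boundary weights being perturbed (one needs the independent-coin construction \eqref{eq:om-B} to convert the cross term into $\frac{\e}{1-u}\E[\#\{i\le\xi_{e_1}:\om_{i,0}=0\}]$), and to bound the \emph{horizontal} exit you must raise $u$, not lower it. With those repairs a mean-comparison at parameter distance $N^{-1/3}$ can indeed give the $O(N^{2/3})$ upper bound, which is a somewhat lighter route than the paper's Chebyshev tail bound and bootstrap (Proposition \ref{yextibound}, Lemmas \ref{lem:covcomp}--\ref{lem:varcomp}); note, however, that the paper needs the tail estimate $\P\{\xi_{e_2}>rN^{2/3}\}\le Cr^{-3}$ (Corollary \ref{cor:CC}) later anyway.

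The genuine gap is the lower bound. Your contradiction scheme rests on the claim that Theorem \ref{thm:LLNG} gives a mean gap of order $\delta N^{2/3}$ between the stationary passage times at parameters $u$ and $u-\delta N^{-1/3}$; in the $u$-characteristic direction the first-order term cancels by the very definition of \eqref{eq:bound}, so the gap is only of order $\delta N^{-1/3}+\delta^2 N^{1/3}$, and ``a concentration stronger than what subadditive decoupling allows'' is not an argument one can run with it. What is actually needed, and what the paper supplies, is a proof that the exit point is at least $\delta N^{2/3}$ with probability close to one (Lemma \ref{lem:asym}): this uses Lemma \ref{lem:probban} (a Doob maximal-inequality bound for $\sup_z\{\sS^{(u)}_z-\mathcal V^{(\lambda)}_z\}$ against a perturbed boundary), the competition interface and the reversed process to dominate $v^{(\lambda)}(n)$ by an exit point so that Corollary \ref{cor:CC} applies, and a Brownian scaling limit of the boundary random-walk difference to show the supremum is of order $N^{1/3}$ with high probability. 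That lower bound on the exit point is then converted into $\Var\ge C_1N^{2/3}$ through the identity \eqref{eq:b-om} and Proposition \ref{lem:exit}. None of these ingredients appear in your sketch, so as written the lower bound is unproved.
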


In the off-characteristic direction, the process $G^{(u)}_{m_u(N), n_u(N)}$  satisfies a central limit theorem, and therefore the variance is of order $N$. This is due to the boundary effect, as we show that maximal paths spend a macroscopic amount of steps along a boundary, and enter the bulk at a point which creates a characteristic rectangle  with the projected exit point. 

\begin{theorem}\label{thrm:voffchar}
Fix a $c\in \R$. Fix a parameter $u \in (0,1)$ and let $(m_u, n_u)$ the characteristic direction corresponding to $u$ as in \eqref{eq:bound}. Then for $\alpha \in (2/3, 1]$,
\begin{align*}
&\lim_{N\to\infty}\frac{G^{(u)}_{m_u(N), n_u(N)+\fl{c N^\alpha}}-\E[G^{(u)}_{m_u(N),  n_u(N)+\fl{c N^\alpha}}]}{N^{\alpha/2}}\\
&\hspace{5cm}\stackrel{\mathcal D}{\longrightarrow} Z \sim \mathcal N(0,\Var(\om^{(u)}_{1,0}) \mathbbm{1}\{c<0\}+\Var(\om^{(u)}_{0,1}) \mathbbm{1}\{c>0\}).
\end{align*}
\end{theorem}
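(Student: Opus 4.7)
The plan hinges on exploiting the variational decomposition \eqref{eq:varform} of $G^{(u)}_{w_N}$, where $w_N:=(m_u(N),n_u(N)+\lfloor cN^\alpha\rfloor)$, as a maximum over the exit points from the two boundaries. By the intrinsic symmetry of the model (exchanging the two axes simultaneously with replacing $u$ by its dual parameter), the two signs of $c$ yield structurally identical arguments, so I treat $c>0$. Geometrically, $w_N$ then lies slightly above the characteristic ray of the $u$-boundary, and the maximizer is expected to leave the vertical axis at a macroscopic height while collecting only negligibly many horizontal-boundary weights.

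The crux is an exit-point concentration estimate. Let $Z_N$ denote the exit point of the maximizing path from the vertical axis. I would prove that $Z_N$ is concentrated at scale $N^\alpha$: there exists an explicit $\kappa=\kappa(u,p,c)>0$ such that for every $\eta>0$,
\[
\bP\bigl((1-\eta)\kappa N^\alpha \le Z_N \le (1+\eta)\kappa N^\alpha\bigr)\longrightarrow 1.
\]
The heuristic for the scale is that there is a unique boundary parameter $u'(N)=u+O(N^{\alpha-1})$ whose characteristic direction passes through $w_N$; the mismatch $u'(N)-u$ is of order $N^{\alpha-1}$, and the first-order effect accumulated along $N$ bulk columns has size $N\cdot N^{\alpha-1}=N^\alpha$, which must be absorbed on the vertical boundary. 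Technically, the estimate comes from the variance identity of Proposition \ref{lem:exit} combined with the boundary-shift comparison used in Sections \ref{sec:varub}--\ref{sec:varlb}: the upper tail on $Z_N$ is obtained by dominating $G^{(u)}_{w_N}$ in terms of $G^{(u'(N)+\eta N^{\alpha-1})}_{w_N}$ and applying the characteristic-direction variance bound for the shifted model; the lower tail follows symmetrically from a coupling with $u'(N)-\eta N^{\alpha-1}$.

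With exit-point concentration in hand, decompose on the event $\{Z_N=k\}$, for $k\approx \kappa N^\alpha$, via \eqref{eq:varform}:
\[
G^{(u)}_{w_N}=S^v_k+\om_{e_1+ke_2}+G_{e_1+ke_2,\,w_N},
\]
where $S^v_k=\sum_{j=1}^k\om^{(u)}_{je_2}$ is a sum of i.i.d.\ Bernoulli$(p(1-u)/(u+p(1-u)))$ weights, independent of the bulk weights entering the second summand. The scaled boundary sum $(S^v_{Z_N}-\bE[S^v_{Z_N}])/N^{\alpha/2}$ converges in distribution to a centered Gaussian with variance proportional to $\Var(\om^{(u)}_{0,1})$ by a standard CLT-with-random-index argument, using Skorokhod embedding together with the concentration of $Z_N$. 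The bulk term $G_{e_1+ke_2,w_N}$ starts at macroscopic distance from $w_N$ in a direction that is asymptotically characteristic for the shifted boundary model; by Theorem \ref{thm:varub} applied to the shifted model (or its no-boundary companion in Section \ref{sec:noboundary}), its centered fluctuations are at most of order $N^{1/3}$, which is $o(N^{\alpha/2})$ precisely because $\alpha>2/3$. Consequently only the boundary Bernoulli sum survives in the scaling limit, yielding the desired limit law.

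The main obstacle is the sharp exit-point concentration at the mesoscopic scale $N^\alpha$. Because the boundary shift required by the comparison argument vanishes as $N\to\infty$, the variance bounds of Theorem \ref{thm:varub} must be tracked with explicit dependence on this shrinking shift rather than invoked as a black box; this is the same technical point that appears in off-characteristic CLTs for the log-gamma polymer and for TASEP with step-Bernoulli initial condition. A secondary but genuine annoyance is the random-index CLT for $S^v_{Z_N}$: one cannot simply freeze $Z_N$ at its typical value, because the maximizer is correlated with the boundary weights through the event defining $Z_N$, and careful monotonicity and coupling arguments are needed to decouple the limiting fluctuations of $S^v_{Z_N}$ from the bulk contribution.
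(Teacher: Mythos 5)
Your skeleton (exit at scale $N^\alpha$ from the favoured boundary, bulk part of size $O(N^{1/3})=o(N^{\alpha/2})$) captures the right heuristic, but the central step is not justified and, as stated, is false. You claim that $(S^v_{Z_N}-\E[S^v_{Z_N}])/N^{\alpha/2}$ converges to a centered Gaussian with variance proportional to $\Var(\om^{(u)}_{0,1})$ by an Anscombe/Skorokhod random-index argument. Write $\mu=\E(\om^{(u)}_{0,1})$ and decompose $S^v_{Z_N}-\E[S^v_{Z_N}]=(S^v_{Z_N}-\mu Z_N)+\mu(Z_N-\E[Z_N])+(\text{a deterministic term})$. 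Anscombe controls only the first piece; the second piece fluctuates on the scale of the exit-point fluctuations. The best concentration one can hope to extract from the paper's comparison technology is $Z_N=\kappa N^\alpha+O_P(N^{2/3})$ (with the concentration you actually state, $Z_N\in[(1\pm\eta)\kappa N^\alpha]$, the term can even be of order $\eta\kappa N^\alpha$), and since $\alpha\le 1$ gives $N^{\alpha/2}\le N^{1/2}\ll N^{2/3}$, the index term $\mu(Z_N-\E[Z_N])$ dominates the target scale $N^{\alpha/2}$ for every admissible $\alpha$. So the boundary sum alone does not have the claimed limit; what is true is that this $O(N^{2/3})$ contribution cancels against the $Z_N$-dependence of the bulk term $G_{(1,Z_N),w_N}$, because $k\mapsto\E[S^v_k]+(\text{LLN value of }G_{(1,k),w_N})$ is flat to first order near its maximizer with curvature of order $1/N$. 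Establishing that cancellation is precisely the hard KPZ-type point; it is not addressed by freezing $Z_N$, by a random-index CLT, or by the unspecified ``monotonicity and coupling''. In addition, the mesoscopic concentration of $Z_N$ at scale $N^\alpha$ is nowhere in the paper (Proposition \ref{yextibound} treats the characteristic direction only), so it would have to be proved separately with explicit dependence on the shrinking shift $u'-u\asymp N^{\alpha-1}$; plausible, but a substantial new estimate.

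The paper's proof avoids the exit point altogether by using stationarity as an exact identity. For $c>0$, telescoping the increments \eqref{eq:gradients} along the column $i=m_u(N)$ gives $G^{(u)}_{m_u(N),\,n_u(N)+\fl{cN^\alpha}}=G^{(u)}_{m_u(N),\,n_u(N)}+\sum_{j=n_u(N)+1}^{n_u(N)+\fl{cN^\alpha}}J_{m_u(N),j}$, and by Burke's property (Lemma \ref{burke}, Corollary \ref{cor:downrightpath}) the summands are i.i.d.\ with the law of $\om^{(u)}_{0,1}$; for $c<0$ one telescopes horizontal increments $I$ along the top row to a point in the characteristic direction. After centering and dividing by $N^{\alpha/2}$, the characteristic-direction term is $o_P(1)$ because its variance is $O(N^{2/3})$ by Theorem \ref{thm:varub} and $\alpha>2/3$; Chebyshev, Slutsky and the classical CLT for the i.i.d.\ Bernoulli sum of $\asymp|c|N^\alpha$ terms then give the limit (this computation, like yours, naturally produces the factor $|c|$ in the limiting variance). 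If you wish to salvage your route, you would effectively have to rerun the Section \ref{sec:varub} machinery for the shifted parameter and then compare the maximum over exit points with a fixed mesoscopic exit; the increment identity above yields the theorem in a few lines instead.
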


\begin{remark} Set $\frak J_p$ contains only the directions $(1, t)$ for which 
$p < t < 1/p$. Any other directions with $t < p$ or $t > p^{-1}$ -that also correspond to the flat edge of the non-boundary model- and for an arbitrary $u \in (0,1)$, are necessarily off-characteristic directions and along those, the last passage time satisfies a central limit theorem.  \qed
\end{remark}

We also have partial results for the model without boundaries. The approach does not allow to access the variance of the non-boundary model directly, but we have 

\begin{theorem}\label{thm:nbsl}
Fix $x,y\in(0,\infty)$ so that $p < y/x < p^{-1}$. Then, there exist finite constants $N_0$ and $C=C(x, y, p)$, such that, for $b\geq C$,  $N\geq N_0$ and any $0<\alpha<1$,
\begin{equation}\label{eq:ppp}
\P\{|G_{(1,1),(\lfloor Nx\rfloor,\lfloor Ny\rfloor)}-Ng_{pp}(x,y)|\geq bN^{1/3}\}\leq Cb^{-3\alpha/2}.
\end{equation}

In particular, for $N > N_0$, and $1\leq r<3\alpha/2$  we get the moment bound 
\begin{equation}
\E\bigg[\bigg|\frac{G_{(1,1),(\lfloor Nx\rfloor,\lfloor Ny\rfloor)}-Ng_{pp}(x,y)}{N^{1/3}}\bigg|^r\bigg]\leq C(x,y,p,r)<\infty.
\end{equation}
\end{theorem}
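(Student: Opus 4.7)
The plan is to deduce the non-boundary tail estimate by coupling with the invariant boundary model at an appropriate parameter $u$ and transferring the variance control of Theorem~\ref{thm:varub}. I would first choose $u = u(x,y,p) \in (0,1)$ to be the unique solution of $(p+(1-p)u)^2/p = y/x$, which is solvable exactly because $p < y/x < p^{-1}$. With this choice, the macroscopic characteristic direction \eqref{eq:chardirmacro} through $(x,y)$ is the characteristic direction of the $u$-boundary model, and Theorem~\ref{thm:LLNG} combined with the variational identity in Theorem~\ref{thm:LLNp} gives $|\E G^{(u)}_{0,(\lfloor Nx\rfloor,\lfloor Ny\rfloor)} - Ng_{pp}(x,y)| \leq C$.

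For the upper tail, any admissible path from $(1,1)$ to $(\lfloor Nx\rfloor,\lfloor Ny\rfloor)$ can be prepended by the diagonal step $0 \mapsto (1,1)$ to produce an admissible path in the $u$-boundary model, so $G_{(1,1),(\lfloor Nx\rfloor,\lfloor Ny\rfloor)} \leq G^{(u)}_{0,(\lfloor Nx\rfloor,\lfloor Ny\rfloor)}$. Absorbing the $O(1)$ mean discrepancy into $b$ and applying Chebyshev with $\Var G^{(u)} \leq C N^{2/3}$ from Theorem~\ref{thm:varub} yields $\P(G - Ng_{pp} \geq bN^{1/3}) \leq C b^{-2}$, which is stronger than $C b^{-3\alpha/2}$ on $b \geq 1$ for every $\alpha < 4/3$.

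The lower tail is the main obstacle because the inequality $G \leq G^{(u)}$ goes the wrong way; here I would follow the route of \cite{balazs2006cube, Sep-12}, perturbing to $u_\mu = u - c\mu N^{-1/3}$ with $\mu = \mu(b)$ to be tuned, so that $(\lfloor Nx\rfloor,\lfloor Ny\rfloor)$ becomes off-characteristic for the $u_\mu$-model in the direction that forces maximizing paths to exit on one specific boundary. The variational decomposition \eqref{eq:varform} combined with the monotonicity $G_{v,w} \leq G_{(1,1),w}$ for $v \geq (1,1)$ gives $G_{(1,1),(\lfloor Nx\rfloor,\lfloor Ny\rfloor)} \geq G^{(u_\mu)}_{0,(\lfloor Nx\rfloor,\lfloor Ny\rfloor)} - S_\xi$, where $S_\xi$ is the sum of boundary weights along the exit stub. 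A Cator--Groeneboom style exit bound of the form $\P(\xi^{(u_\mu)} > t) \leq C\mu^{\kappa} N^\rho /t^3$, extracted from the same machinery underlying Proposition~\ref{lem:exit}, together with the second-order expansion at the characteristic minimizer that gives $\E G^{(u_\mu)} - Ng_{pp}(x,y) \asymp \mu^2 N^{1/3}$, and Bernstein concentration of the boundary sum, are then combined by balancing the three error sources: choosing $\mu$ as an appropriate power of $b$ produces the claimed $C b^{-3\alpha/2}$ bound.

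Finally, the moment bound follows from $\E|X|^r = r\int_0^\infty b^{r-1}\P(|X|\geq b)\,db$ applied to $X = (G_{(1,1),(\lfloor Nx\rfloor,\lfloor Ny\rfloor)} - Ng_{pp}(x,y))/N^{1/3}$, splitting the integral at $b = C$ and using Theorem~\ref{thm:varub} on the bounded part and the tail bound beyond, so that the integral converges whenever $r < 3\alpha/2$. The principal technical difficulty is the lower tail: producing the cubic exit-point estimate for the perturbed boundary with explicit, sharp dependence on $\mu$ and aligning it with the second-order mean correction so that the optimization in $\mu$ yields the exponent $3\alpha/2$ rather than a worse one.
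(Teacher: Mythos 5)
Your setup (choosing $u$ with $(p+(1-p)u)^2/p=y/x$ so that $(\lfloor Nx\rfloor,\lfloor Ny\rfloor)$ is characteristic for the $u$-boundary model), your upper-tail step ($G_{(1,1),w}\le G^{(u)}_{0,w}$ plus Chebyshev with $\Var(G^{(u)})\le CN^{2/3}$ from Theorem \ref{thm:varub}), and your tail-integration for the moment bound all agree with the paper, which splits $|G_{(1,1),w}-Ng_{pp}(x,y)|$ into $|G_{(1,1),w}-G^{(u)}_{0,w}|$ plus $|G^{(u)}_{0,w}-Ng_{pp}(x,y)|$ and treats the second term exactly as you do. The genuine gap is in the lower tail, i.e.\ in bounding $\P\{G^{(u)}_{0,w}-G_{(1,1),w}\ge bN^{1/3}\}$. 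Your central inequality $G_{(1,1),w}\ge G^{(u_\mu)}_{0,w}-S_\xi$ is essentially the identity $G^{(u_\mu)}_{0,w}-S_\xi=G_{(\xi\vee 1,1),w}\le G_{(1,1),w}$, and the plan to control $G^{(u_\mu)}$, $\xi$ and $S_\xi$ \emph{separately} (mean gap $\asymp\mu^2N^{1/3}$, cubic exit bound, Bernstein for the boundary sum) cannot be balanced to anything at scale $N^{1/3}$: for any perturbation of size $\mu N^{-1/3}$ the exit point is typically of order $N^{2/3}$, not $N^{1/3}$ (compare Proposition \ref{yextibound} and Lemma \ref{lem:asym}), so $S_\xi\asymp u\,\xi\asymp N^{2/3}$ dwarfs both $bN^{1/3}$ and the mean gap; the event $\{S_\xi\ge \mu^2N^{1/3}+bN^{1/3}\}$ is typical rather than rare, and a bound of the form $\P\{\xi^{(u_\mu)}>t\}\le C\mu^{\kappa}N^{\rho}t^{-3}$ evaluated at $t\asymp(\mu^2+b)N^{1/3}$ blows up with $N$ for any fixed power $\mu=\mu(b)$. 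The quantity that is actually small is not $S_\xi$ but the difference $S_\xi-(G_{(1,1),w}-G_{(\xi\vee1,1),w})$, and your decomposition discards precisely this cancellation by decoupling $G^{(u_\mu)}$ from $S_\xi$.

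The missing ingredient is the comparison of bulk increments with stationary increments of a perturbed boundary model. The paper writes the bad event inside $\{\xi^{(u)}_{e_1}\le aN^{2/3}\}$ as $\{\sup_{1\le z\le aN^{2/3}}(\sS^{(u)}_z-(G_{(1,1),w}-G_{(z\vee1,1),w}))\ge bN^{1/3}\}$ and then dominates $G_{(1,1),w}-G_{(z,1),w}\ge \mathcal V^{(\lambda)}_{z-1}$, a sum of i.i.d.\ Ber$(\lambda)$ increments with $\lambda=u-rN^{-1/3}$; this comparison is Lemma \ref{lem:2w}/Lemma \ref{bosw} and holds only on an event about the competition interface, whose failure probability is controlled via the reversed process and the exit-point bound (Lemma \ref{revintcon}, Corollary \ref{cor:CC}). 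After that, $\sS^{(u)}_z-\mathcal V^{(\lambda)}_{z-1}$ is a random walk with drift at most $raN^{1/3}$, Doob's inequality applies, and the choices $a=r=b^{\alpha/2}$ (with $d=2$) match the exit-point term $\P\{\xi^{(u)}_{e_1}>b^{\alpha/2}N^{2/3}\}\le Cb^{-3\alpha/2}$; this is the content of Lemmas \ref{lem:probban} and \ref{lem:bulk}. Without this increment comparison, or an equivalent device restoring the cancellation between the boundary sum and the matching bulk increment, the optimization over $\mu$ you describe has nothing to balance and cannot produce the exponent $3\alpha/2$.
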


The bounds in the previous theorem work in directions where the shape function is strictly concave. In directions of flat edge we have

\begin{theorem}\label{thm:flatvar}
Fix $x,y\in(0,\infty)$ so that $p > y/x$ or $ y/x > p^{-1}$. Then, there exist finite constants $c = c(x,y,p)$ and $C=C(x, y, p)$, such that
\begin{equation}\label{eq:ppp2}
\Var(G_{(1,1),(\lfloor Nx\rfloor,\lfloor Ny\rfloor)}) \le C N^2 e^{-cN}\to 0 \quad (N \to \infty).
\end{equation}
\end{theorem}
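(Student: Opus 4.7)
The plan is to pin $G_{(1,1),(\fl{Nx},\fl{Ny})}$ between a deterministic upper bound and that same upper bound minus an event of exponentially small probability, so that the fluctuations are overwhelmed by the exponential factor. By the symmetry $g_{pp}(s,t)=g_{pp}(t,s)$ and invariance of the i.i.d.\ Bernoulli environment under coordinate swap, I may assume $y/x<p$. Writing $m=\fl{Nx}$ and $n=\fl{Ny}$, any admissible path from $(1,1)$ to $(m,n)$ collects weights only through diagonal entries, whose row indices are distinct elements of $\{2,\ldots,n\}$; hence deterministically $G_{(1,1),(m,n)}\le n-1$.

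Next I would produce a greedy path that achieves this maximum with high probability. Set $I_0=1$ and, for $\ell=1,\ldots,n-1$,
\[
I_\ell:=\min\{\,i>I_{\ell-1}:\omega_{i,\ell+1}=1\,\},
\]
with $I_\ell=\infty$ if no such $i$ exists. On $\{I_{n-1}\le m\}$, the path that for each $\ell$ takes $I_\ell-I_{\ell-1}-1$ horizontal steps followed by one diagonal step into $(I_\ell,\ell+1)$, and finally $m-I_{n-1}$ horizontal steps to $(m,n)$, is admissible and collects $n-1$ Bernoulli points. Therefore $\{G_{(1,1),(m,n)}<n-1\}\subseteq\{I_{n-1}>m\}$. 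The key probabilistic input is that the increments $\Delta_\ell:=I_\ell-I_{\ell-1}$ are i.i.d.\ Geometric$(p)$: $\Delta_\ell$ is the waiting time for the first $1$ in the Bernoulli$(p)$ sequence $(\omega_{I_{\ell-1}+j,\ell+1})_{j\ge 1}$, and since rows of the environment are mutually independent, $\Delta_\ell$ is Geometric$(p)$ and independent of $\sigma(\Delta_1,\ldots,\Delta_{\ell-1})$. The assumption $y/x<p$ yields $\E[\Delta_1]=1/p<x/y$, so Cram\'er's theorem for sums of i.i.d.\ variables with moment generating function finite in a neighbourhood of zero gives $c=c(x,y,p)>0$ with
\[
\P(I_{n-1}>m)=\P\Big(\sum_{\ell=1}^{n-1}\Delta_\ell>m-1\Big)\le e^{-cN}
\]
for all sufficiently large $N$.

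Combining these inputs with the elementary identity $\Var(G)\le\E[(n-1-G)^2]\le(n-1)^2\,\P(G<n-1)$ delivers $\Var(G_{(1,1),(m,n)})\le (Ny)^2 e^{-cN}$, which is the advertised decay (after absorbing $y^2$ into the constant $C$). The proof is conceptually direct; the only point that requires some care is verifying that the greedy increments are genuinely i.i.d.\ Geometric$(p)$, which hinges on the independence of the i.i.d.\ Bernoulli environment across different rows and the fact that $\Delta_\ell$ depends only on weights in row $\ell+1$.
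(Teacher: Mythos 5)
Your proposal is correct and follows essentially the same route as the paper: after the symmetry reduction to $y/x<p$, you build a greedy path whose horizontal increments between successive diagonal pickups are i.i.d.\ Geometric with mean $1/p$, use a large deviation bound to show the path reaches the full height $n-1$ except on an event of probability $e^{-cN}$, and convert this into the $N^2e^{-cN}$ variance bound by an elementary second-moment estimate. The only differences are cosmetic (your bound $\Var(G)\le\E[(n-1-G)^2]\le (n-1)^2\P\{G<n-1\}$ versus the paper's manipulation via $\E(G\mathbbm{1}_{\mathcal A})$), and your explicit verification that the greedy increments are i.i.d.\ Geometric is a welcome added detail.
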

For finer asymptotics on the variance and also weak limits, particularly close to the critical lines $y = px$ and $y = p^{-1}x$ we direct the reader to \cite{georgiou2010soft, georgiou2016optimality}.
 
We have already alluded to the maximal paths. Maximal paths are admissible paths that attain the last passage time. In the literature they can also be found as random geodesics. For this particular model, the maximal path is not unique - this is because of the discrete nature of the environment distribution, so we need to enforce an a priori
condition that makes our choice unique when we refer to it.  Unless otherwise specified, the maximal path we select is the \emph{right-most} one (it is also the \emph{down-most} maximal path). 

\begin{definition} An admissible maximal path from $0$ to $(m,n)$
\[\hat \pi_{0, (m,n)} = \{ \{ (0,0) = \hat \pi_0, \hat \pi_1, \ldots, \hat \pi_K = (m,n)  \}\]
 is the right-most (or down-most) maximal path if and only if it is maximal and if $ \hat \pi _i = (v_i,w_i) \in \hat \pi_{0, (m,n)}$ then the sites $( k,\ell), v_i < k < m, 0 \le \ell < w_i$  cannot belong on any maximal path from 0 to $(m,n)$.
\end{definition}
In words, no site underneath the right-most maximal path can belong on a different maximal path. An algorithm to construct the right-most path iteratively is given in \eqref{maxpathhatpi}.

For this right-most path $\hat \pi$ we define $\xi^{(u)}$ its exit point from the axes in the environment $\om^{(u)}$. We indicate with $\xi^{(u)}_{e_1}$ the exit point from the $x$-axis and $\xi^{(u)}_{e_2}$ the exit point from the $y$-axis. If $\xi^{(u)}_{e_1}>0$ the maximal path $\hat\pi$ chooses to go through the $x-$axis and $\xi^{(u)}_{e_2}=0$ and vice versa. If $\xi^{(u)}_{e_1}=\xi^{(u)}_{e_2}=0$ it means the maximal path directly enters into the bulk with a diagonal step. When we do not need to distinguish from which axes we exit, we just denote the generic exit point by $\xi^{(u)}$. 

The exit point $\xi^{(u)}_{e_1}$ represents the exit of the maximal path from level 0. To study the fluctuations of this path around its enforced direction,  
define 
\begin{equation}\label{v1}
v_0(j)=\min\{i\in\{0,\dots,m\}:\exists k\text{ such that } \hat \pi_k=(i,j)\},
\end{equation}
and 
\begin{equation} \label{v2}
v_1(j)=\max\{i\in\{0,\dots,m\}:\exists k\text{ such that } \hat \pi_k=(i,j)\}.
\end{equation}
These represent, respectively, the entry and exit point from a fixed horizontal level $j$ of a path $\hat \pi$. Since our paths can take diagonal steps, it may be that $v_0(j) = v_1(j)$ for some $j$.  

Now, we can state the theorem which shows that $N^{2/3}$ is the correct order of the magnitude of the path fluctuations. We show that the path stays in an $\ell^1$  ball of radius $CN^{2/3}$ with high probability, and simultaneously, avoid balls of radius $\delta N^{2/3}$ again with high probability for $\delta$ small enough.  

\begin{theorem} \label{thm:pathflucts}
Consider the last passage time in environment $\om^{(u)}$ and let $\hat\pi_{0,m_u(N),n_u(N)}$ be the right-most maximal path from the origin up to  $(m_u(N),n_u(N))$ as in \eqref{eq:bound}.    
Fix a  $0\leq\tau<1$. Then, there exist constants $C_1,C_2<\infty$ such that for $N\geq 1$, $b\geq C_1$ 
\begin{equation}\label{eq:path1}
\P\{v_0(\lfloor\tau n_u(N)\rfloor)<\tau m_u(N)-bN^{2/3}\text{ or }v_1(\lfloor\tau n_u(N)\rfloor)>\tau m_u(N)+bN^{2/3}\}\leq C_2b^{-3}.
\end{equation}
The same bound holds for vertical displacements.\\
Moreover, for a fixed $\tau \in (0,1)$ and given $\e>0$, there exists $\delta>0$ such that
\begin{equation}\label{eq:path2}
\lim_{N\to\infty}\P\{\exists k \text{ such that } |\hat \pi_k-(\tau m_u(N),\tau n_u(N))|\leq \delta N^{2/3}\}\leq \e.
\end{equation}
\end{theorem}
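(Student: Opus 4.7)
The plan is to follow the blueprint of Bal\'azs--Cator--Sepp\"al\"ainen, as adapted in Sepp\"al\"ainen's log-gamma polymer paper: exit-point tail bounds get converted into transversal fluctuation estimates through comparisons with translated auxiliary boundary models. Two earlier inputs are essential: the exit-point second-moment bound $\E[(\xi^{(u)})^2] \le C N^{4/3}$ in the characteristic direction (an intermediate step inside the proof of Theorem \ref{thm:varub} via the variance identity of Proposition \ref{lem:exit}), and the matching lower bound $\E[\xi^{(u)}] \ge c N^{2/3}$ extracted from the variance lower bound.

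For the upper bound \eqref{eq:path1}, I would fix the level $j = \lfloor \tau n_u(N)\rfloor$ and, for each threshold $b$, construct an auxiliary boundary model with origin at $(\lceil \tau m_u(N) + bN^{2/3}/2\rceil, j)$, endpoint $(m_u(N), n_u(N))$, and boundary parameter $u'$ chosen so that this endpoint sits in the characteristic direction of the new model in the sense of \eqref{eq:bound}. The existence of such a $u'$ follows from continuity of the map $u \mapsto (m_u, n_u)/N$ and from the fact that the shifted direction remains in $\frak J_p$ for $b \le c_0 N^{1/3}$; larger $b$ is handled trivially since then the claimed $C_2 b^{-3}$ bound is below the trivial probability estimates coming from $v_1 \le m_u(N)$. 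On the event $\{v_1(j) > \tau m_u(N) + bN^{2/3}\}$, the down-most property of $\hat\pi$ combined with monotonicity of passage times in the boundary parameter forces the auxiliary exit point $\xi^{(u')}_{e_1}$ to exceed $bN^{2/3}/2$; Markov's inequality applied to its second moment then yields the $C_2 b^{-3}$ bound. The complementary estimate for $v_0(j)<\tau m_u(N)-bN^{2/3}$ is symmetric, using a horizontally shifted auxiliary origin together with a different parameter $u''$ and the vertical-axis exit point.

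For the anti-concentration \eqref{eq:path2}, I would first restrict to the high-probability event furnished by \eqref{eq:path1} that $\hat\pi$ stays in a tube of $\ell^1$-width $CN^{2/3}$ around the segment from $0$ to $(m_u(N), n_u(N))$. If in addition $\hat\pi$ enters the $\ell^1$-ball of radius $\delta N^{2/3}$ around $(\tau m_u(N), \tau n_u(N))$, then its first entry point $w$ into that ball can play the role of origin for a translated boundary model with parameter $\tilde u = u + O(N^{-1/3})$ in whose characteristic direction the endpoint $(m_u(N), n_u(N))$ lies. By construction, the exit point of this translated model from one of its two axes must be at most $2\delta N^{2/3}$. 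Combined with the matching lower bound $\liminf_N \P\{\xi^{(\tilde u)} \ge \eta N^{2/3}\} \ge c(\eta) > 0$, which follows from the lower variance bound of Theorem \ref{thm:varub} inserted back into the variance--exit-point identity of Proposition \ref{lem:exit}, this forces the probability of such an entry to drop below any prescribed $\e$ once $\delta$ is chosen small enough.

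The main obstacle is the discrete Bernoulli environment together with the diagonal $e_1+e_2$ steps: both features preclude direct use of the continuous monotonicity arguments available in the log-gamma and exponential-LPP settings. The right-most/down-most convention restores the planarity needed in the auxiliary comparisons above, but the diagonal steps force careful bookkeeping whenever two competing maximal paths coincide at a single diagonal pass. A secondary technical point is obtaining the translated-model exit-point lower bound uniformly in the (macroscopically equal but microscopically varying) parameter $\tilde u$; this is handled by checking that the constants appearing in the characteristic-direction moment bounds depend continuously on $u$, which is visible from the explicit formulas coming out of Proposition \ref{lem:exit}.
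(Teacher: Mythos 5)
Your overall strategy (reduce transversal fluctuations of $\hat\pi$ to exit-point estimates for a stationary model rooted near $(\tau m_u(N),\tau n_u(N))$) is the same as the paper's, but two of your key steps do not work as stated. First, for \eqref{eq:path1} your quantitative mechanism is too weak: Markov/Chebyshev applied to $\E[(\xi^{(u')})^2]\le CN^{4/3}$ gives $\P\{\xi^{(u')}>bN^{2/3}/2\}\le Cb^{-2}$, not the claimed $C_2b^{-3}$; the cubic decay requires the tail bound of Corollary \ref{cor:CC} (Proposition \ref{yextibound}), which is strictly stronger than a second-moment bound. Moreover, your reduction itself is not justified: you shift the origin off the characteristic line and change the parameter to some $u'$, and then assert that on $\{v_1(j)>\tau m_u(N)+bN^{2/3}\}$ ``monotonicity in the boundary parameter'' forces the auxiliary exit point to be large. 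The clean way to make this reduction exact -- and what the paper does -- is to place the corner at $(\lfloor\tau m\rfloor,\lfloor\tau n\rfloor)$, keep the \emph{same} parameter $u$ (the sub-rectangle $(1-\tau)(m_u,n_u)$ is automatically characteristic for $u$), and observe via Burke's property (Lemma \ref{burke}, Corollary \ref{cor:downrightpath}) that the increments $I_{i,\ell}$, $J_{k,j}$ of the original process serve as stationary boundary weights for the sub-model, giving the exact distributional identity \eqref{eq:neweq} between $v_1(\ell)-k$ and $\xi_{e_1}^{(k,\ell)}$. Without that identification, the comparison between the original path's position at level $j$ and the exit point of a differently parametrized, shifted model is an unproved claim.

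Second, and more seriously, your argument for \eqref{eq:path2} has a logical gap: what is needed is an \emph{anti-concentration} statement, $\P\{0\le\xi\le\delta N^{2/3}\}\le\e$ for small $\delta$ uniformly in large $N$, i.e.\ Lemma \ref{lem:asym} (estimate \eqref{eq:Zpiures}). You propose to deduce it from a lower bound of the form $\liminf_N\P\{\xi^{(\tilde u)}\ge\eta N^{2/3}\}\ge c(\eta)>0$, obtained from the variance lower bound through Proposition \ref{lem:exit}; but such a bound only yields $\P\{\xi<\eta N^{2/3}\}\le 1-c$, which cannot be made smaller than an arbitrary $\e$ and therefore does not give \eqref{eq:path2}. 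In the paper this step is a separate and substantial argument (the proof of Lemma \ref{lem:asym}): a comparison of $\sS^{(u)}_x-\mathcal V^{(\lambda)}_x$ with a Brownian motion with drift on the scale $x\sim N^{2/3}$, combined with competition-interface/reversed-path exit bounds for perturbed parameters $\lambda=u\pm rN^{-1/3}$. Also, introducing a parameter $\tilde u=u+O(N^{-1/3})$ for the translated model is both unnecessary and harmful here: by Burke's property the increments along any down-right path through the bulk are stationary with the \emph{same} parameter $u$, and it is precisely this that lets the paper identify the entry of $\hat\pi$ into the $\delta N^{2/3}$-ball with the event $\{0\le\xi_{e_1}^{(k,\ell)}\le 4\delta N^{2/3}\text{ or }0\le\xi_{e_2}^{(k,\ell)}\le 4c\delta N^{2/3}\}$ for a corner $(k,\ell)$ placed southwest of the ball on the characteristic line.
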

\bigskip

\section{Burke's property and law of large numbers}
\label{sec:burke}
To simplify the notation in what follows, set $w = (i,j) \in \Z_+^2$ and define the last passage time gradients by 
\be\label{eq:gradients}
I^{(u)}_{i+1,j} = G^{(u)}_{0, (i+1,j)}  - G^{(u)}_{0, (i,j)}, \quad \text{and} \quad J^{(u)}_{i,j+1} =  G^{(u)}_{0, (i,j+1)}  - G^{(u)}_{0, (i,j)}.
\ee
When there is no confusion we will drop the superscript $(u)$ from the above. When $j = 0$ we have that $\{I^{(u)}_{i,0}\}_{i, \in \N}$ is a collection of i.i.d.\ Bernoulli($u$) random variables since $I^{(u)}_{i,0} = \om_{(i,0)}$. Similarly, for $i = 0$,   $\{J^{(u)}_{0,j}\}_{j \in \N}$ is a collection of i.i.d.\ $\text{Bernoulli}\Big(\frac{p(1-u)}{u + p(1-u)} \Big)$ random variables.

The gradients and the passage time satisfy recursive equations. This is the content of the next lemma. 
\begin{lemma}
Let $u \in (0,1)$  and $(i, j) \in \N^2$. Then the last passage time can be recursively computed as 
\begin{align}\label{eq:LPPrec}
G^{(u)}_{0, (i, j)} &= \max\big\{ G^{(u)}_{0, (i, j-1)}, \,\,G^{(u)}_{0, (i-1, j)},  \,\,G^{(u)}_{0, (i-1, j-1)} + \om_{i,j} \big\} 
\end{align}
Furthermore, the last passage time gradients satisfy the recursive equations 
\be\label{eq:4}
\begin{aligned}
I^{(u)}_{i,j}&=\max\{\omega_{i,j}, \, J^{(u)}_{i-1,j}, \, I^{(u)}_{i,j-1}\} - J^{(u)}_{i-1,j} \\
J^{(u)}_{i,j}&= \max\{\omega_{i,j}, \, J^{(u)}_{i-1,j}, \, I^{(u)}_{i,j-1}\} - I^{(u)}_{i,j-1}. 
\end{aligned}
\ee
\end{lemma}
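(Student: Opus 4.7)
The plan is to prove the two displays in turn: first the one-step dynamic programming recursion \eqref{eq:LPPrec} for $G^{(u)}_{0,(i,j)}$ by the standard argument of conditioning on the type of the last step, and then to derive \eqref{eq:4} by simple algebra, subtracting the appropriate passage time and re-expressing the resulting differences as gradients $I^{(u)}$ and $J^{(u)}$.

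For \eqref{eq:LPPrec}, I would fix $(i,j) \in \N^2$ and split the set of admissible paths $\Pi_{0,(i,j)}$ according to the penultimate vertex, which by the step set $\mathcal R = \{e_1, e_2, e_1+e_2\}$ must be one of $(i-1,j)$, $(i,j-1)$, or $(i-1,j-1)$. Because $V$ in \eqref{eq:potential} assigns zero weight to horizontal and vertical steps and weight $\omega_{v_{k+1}}$ to a diagonal step landing at $v_{k+1}$, the collected weight from the final step is $0$, $0$, and $\omega_{i,j}$ in the three cases respectively. Combining the three sub-maxima and noting that the portion of the path before the penultimate vertex is itself an arbitrary admissible path ending at that vertex, hence contributing exactly $G^{(u)}_{0,(i-1,j)}$, $G^{(u)}_{0,(i,j-1)}$, or $G^{(u)}_{0,(i-1,j-1)}$, yields the three-way maximum in \eqref{eq:LPPrec}. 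Notice that this argument is insensitive to whether the predecessor is on the axis or in the bulk: the boundary collection rule is already encoded inside the values $G^{(u)}_{0,(i-1,j)}$ and $G^{(u)}_{0,(i,j-1)}$, since $(i,j) \in \N^2$ means the last step occurs strictly inside the quadrant.

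For \eqref{eq:4}, starting from \eqref{eq:LPPrec} I would subtract $G^{(u)}_{0,(i-1,j)}$ from both sides and rewrite the three quantities inside the maximum: $G^{(u)}_{0,(i-1,j)} - G^{(u)}_{0,(i-1,j)} = 0$, and
\begin{align*}
G^{(u)}_{0,(i,j-1)} - G^{(u)}_{0,(i-1,j)} &= I^{(u)}_{i,j-1} - J^{(u)}_{i-1,j}, \\
G^{(u)}_{0,(i-1,j-1)} + \omega_{i,j} - G^{(u)}_{0,(i-1,j)} &= \omega_{i,j} - J^{(u)}_{i-1,j},
\end{align*}
using the telescoping identity $G^{(u)}_{0,(i,j-1)} - G^{(u)}_{0,(i-1,j)} = (G^{(u)}_{0,(i,j-1)} - G^{(u)}_{0,(i-1,j-1)}) - (G^{(u)}_{0,(i-1,j)} - G^{(u)}_{0,(i-1,j-1)})$. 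Pulling the common term $-J^{(u)}_{i-1,j}$ out of the maximum, which is legitimate because $\max\{a+c, b+c\} = \max\{a,b\}+c$, gives the first line of \eqref{eq:4}. The second line is obtained symmetrically by subtracting $G^{(u)}_{0,(i,j-1)}$ instead and pulling $-I^{(u)}_{i,j-1}$ out of the maximum.

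There is no real obstacle: the only point that requires a moment's thought is checking that the cell $(i,j) \in \N^2$ lies strictly inside the quadrant so that all three predecessors exist and the diagonal-step weight $\omega_{i,j}$ is truly a bulk Bernoulli$(p)$ variable; but this is guaranteed by the hypothesis $(i,j)\in\N^2$. The argument does not use any properties of the boundary distributions beyond the fact that $G^{(u)}_{0,\cdot}$ is well defined on the axes, so the same proof would work verbatim for the non-boundary model or for any alternative boundary rule, as long as the potential $V$ is the one in \eqref{eq:potential}.
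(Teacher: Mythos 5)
Your proposal is correct and follows essentially the same route as the paper: the paper dismisses \eqref{eq:LPPrec} as immediate from the description of the dynamics (your last-step decomposition simply spells this out) and then derives \eqref{eq:4} by exactly your algebra, subtracting the appropriate passage time, inserting $\pm G^{(u)}_{0,(i-1,j-1)}$ to telescope into the increments $I^{(u)}$, $J^{(u)}$, and pulling the common term out of the maximum (the paper writes out the $J$ case and leaves $I$ to the reader, while you do the reverse). No gaps.
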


\begin{proof}
Equation \eqref{eq:LPPrec} is immediate from the description of the dynamics in the boundary model and the fact that $(i,j)$ is in the bulk. 
We only prove the recursive equation \eqref{eq:4} for the $J$ and the other one is done similarly and left to the reader. Compute 
\begin{align*}
J^{(u)}_{i,j}&=G^{(u)}_{0,(i,j)}-G^{(u)}_{0, (i,j-1)}\\
&=\max\big\{ G^{(u)}_{0, (i, j-1)}, \,\,G^{(u)}_{0, (i-1, j)},  \,\,G^{(u)}_{0, (i-1, j-1)} + \om_{i,j} \big\}  - G^{(u)}_{0,(i,j-1)} \quad \text{ by \eqref{eq:LPPrec},}\\
&=\max\big\{0, G^{(u)}_{0, (i-1,j)}-G^{(u)}_{0, (i,j-1)},  G^{(u)}_{0,(i-1,j-1)}-G^{(u)}_{0, (i,j-1)}+\om_{i,j}\big\}\\
&=\max\big\{0, G^{(u)}_{0, (i-1,j)} \pm G^{(u)}_{0,(i-1,j-1)} -G^{(u)}_{0, (i,j-1)},  G^{(u)}_{0,(i-1,j-1)}-G^{(u)}_{0, (i,j-1)}+\om_{i,j}\big\}\\
&=\max\big\{0, J^{(u)}_{i-1,j} -   I^{(u)}_{i,j-1}, - I^{(u)}_{i,j-1} +\om_{i,j}\big\}\\
&= \max\{\omega_{i,j}, \, J^{(u)}_{i-1,j}, \, I^{(u)}_{i,j-1}\} - I^{(u)}_{i,j-1}. 
 \qedhere
\end{align*}
\end{proof}

The recursive equations are sufficient to prove a partial independence property. 
\begin{lemma}\label{lem:parbur} Assume that $(\omega_{i,j}, I^{(u)}_{i, j-1}, J^{(u)}_{i-1,j})$ are mutually independent with marginal  distributions given by 
\begin{equation}\label{eq:bnddist}
\omega_{i,j}\sim \text{Ber}(p), \quad I^{(u)}_{i, j-1} \sim \text{Ber}(u), \quad J^{(u)}_{i-1,j} \sim  \text{Ber}\Big(\frac{p(1-u)}{u + p(1-u)}\Big).
\end{equation}
Then, $I^{(u)}_{i,j}, J^{(u)}_{i,j}$, computed using the recursive equations \eqref{eq:4} are independent with marginals Ber$(u)$ and Ber$(\frac{p(1-u)}{u + p(1-u)})$ respectively. 
 \end{lemma}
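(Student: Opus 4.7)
My plan is to exploit the fact that all three inputs $\omega_{i,j}$, $I^{(u)}_{i,j-1}$, $J^{(u)}_{i-1,j}$ are $\{0,1\}$-valued, so the maximum $M:=\max\{\omega_{i,j}, J^{(u)}_{i-1,j}, I^{(u)}_{i,j-1}\}$ lies in $\{0,1\}$ and equals $0$ exactly when all three are $0$. This reduces the problem to a finite computation: I would simply enumerate the joint distribution of $(I^{(u)}_{i,j}, J^{(u)}_{i,j}) = (M - J^{(u)}_{i-1,j},\, M - I^{(u)}_{i,j-1})$ on $\{0,1\}^2$ using the recursions \eqref{eq:4} and the assumed independence of the three inputs with the marginals in \eqref{eq:bnddist}.

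Abbreviate $q := \frac{p(1-u)}{u+p(1-u)}$. From the recursions, $(I^{(u)}_{i,j}, J^{(u)}_{i,j})=(1,0)$ iff $I^{(u)}_{i,j-1}=1$ and $J^{(u)}_{i-1,j}=0$, and $(I^{(u)}_{i,j}, J^{(u)}_{i,j})=(0,1)$ iff $I^{(u)}_{i,j-1}=0$ and $J^{(u)}_{i-1,j}=1$; these two cases give probabilities $u(1-q)$ and $(1-u)q$ regardless of $\omega_{i,j}$. The outcome $(1,1)$ happens iff $I^{(u)}_{i,j-1}=J^{(u)}_{i-1,j}=0$ and $\omega_{i,j}=1$, with probability $p(1-u)(1-q)$. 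Finally, $(0,0)$ collects the remaining probability. Then I would read off the marginals and independence directly from this table.

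The key algebraic point that makes everything collapse is the identity
\[
p(1-u)(1-q) = uq,
\]
which is precisely the defining relation of $q$: from $q(u+p(1-u)) = p(1-u)$ one gets $qu = p(1-u)(1-q)$. This is the unique choice of boundary parameter that makes $\P\{(I^{(u)}_{i,j},J^{(u)}_{i,j})=(1,1)\}=uq$, which in turn forces joint independence with marginals Ber$(u)$ and Ber$(q)$ (the other three cells of the table then match automatically, since the two `mixed' cells already have the product form and the $(0,0)$ cell is determined by normalisation).

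I do not expect any real obstacle; the proof is essentially a four-cell verification, and the only thing one must observe is that the specific form of $q$ given in \eqref{eq:yaxis} is exactly the value needed for the $\{0,1\}$-max update to preserve the product Bernoulli law. This is the ``Burke-type'' miracle for the discrete Hammersley dynamics, and it is the foundation on which the inductive extension (to independence of the full gradient strings on a down-right path) in the subsequent sections will rest.
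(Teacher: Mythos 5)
Your proposal is correct and is essentially the paper's own argument: the paper reduces the claim to showing $\mathbb{E}\bigl(h(I^{(u)}_{i,j})k(J^{(u)}_{i,j})\bigr)=\mathbb{E}\bigl(h(I^{(u)}_{i,j-1})k(J^{(u)}_{i-1,j})\bigr)$ by the same finite case enumeration over the Bernoulli inputs (carried out explicitly in the proof of the Burke property, Lemma \ref{burke}), which is exactly your four-cell table in test-function form. Your identification of the identity $uq=p(1-u)(1-q)$, with $q=\frac{p(1-u)}{u+p(1-u)}$, as the point where the product form is preserved is precisely the computation the paper relies on, and your case analysis (the $(1,0)$, $(0,1)$ cells independent of $\omega_{i,j}$, the $(1,1)$ cell requiring $\omega_{i,j}=1$ with both increments zero) is accurate.
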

 
\begin{proof}
The marginal distributions are immediate from the definitions and the independence follows when one shows
\allowdisplaybreaks
\begin{align*}
\mathbb{E}&(h(I^{(u)}_{i,j})k(J^{(u)}_{i,j})) \\
&= \mathbb{E}(h( \omega_{i,j}\vee J^{(u)}_{i-1,j} \vee I^{(u)}_{i,j-1} - J^{(u)}_{i-1,j}) k (\omega_{i,j}\vee J^{(u)}_{i-1,j} \vee I^{(u)}_{i,j-1} - I^{(u)}_{i,j-1}))\\
&= \mathbb{E}(h(I^{(u)}_{i,j-1})k(J^{(u)}_{i-1,j})).
\end{align*}
for any bounded continuous functions $h$, $k$. We omit the details, as they are similar to the proof of Lemma \ref{burke} below. However, in order to prove  Lemma \ref{burke}, one first needs to prove Lemma \ref{lem:parbur}.
\end{proof}

A down-right path $\psi$ on the lattice $\Z^2_+$ is an ordered  sequence of sites $\{ v_i \}_{i \in \Z}$ that satisfy 
\be\label{eq:psi}
 v_i - v_{i-1} \in \{ e_1, - e_2 \}.
\ee
For a given down-right path $\psi$, define $\psi_i = v_i - v_{i-1}$ to be the $i$-th edge of the path and set  
\be \label{eq:patheses}
L_{\psi_i} = 
\begin{cases}
	I^{(u)}_{v_i}, & \text{if } \psi_i = e_1\\
	J^{(u)}_{v_{i-1}}, & \text{if } \psi_i = -e_2. 
\end{cases}
\ee
The first observation is that the random variables in the collection $\{ L_{\psi_i}\}_{i \in \Z}$ satisfy the following:
\begin{lemma}\label{lem:drp-bus}
Fix a down-right path $\psi$. Then the random variables 
	$ \{L_{\psi_i}\}_{i \in \Z}$
	are mutually independent, with marginals 
	\[
	  L_{\psi_i} \sim 
\begin{cases}
	\text{Ber}(u), & \text{if } \psi_i = e_1\\
	\text{Ber}\Big( \frac{p(1-u)}{ u + p(1-u)} \Big), & \text{if } \psi_i = -e_2. 
\end{cases}
	\]
\end{lemma}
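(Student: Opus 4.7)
The plan is to prove Lemma \ref{lem:drp-bus} by induction on the number of ``corner flips'' needed to deform the L-shaped axis path (running down the positive $y$-axis to the origin and then right along the positive $x$-axis) into the given $\psi$. For the induction to close, one needs to carry a slightly stronger hypothesis: for every down-right path $\psi$, the collection $\{L_{\psi_i}\}_{i \in \Z}$ together with the bulk weights $\{\om_w : w \text{ lies strictly north-east of } \psi\}$ are mutually independent, with the $L$'s having the marginals stated in the lemma and each such $\om_w\sim\mathrm{Ber}(p)$. This extra independence of the ``un-revealed'' bulk weights is exactly what allows Lemma \ref{lem:parbur} to be invoked repeatedly.

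The base case is the axis path itself: then the $L$'s are precisely the boundary weights $\om^{(u)}_{ke_1}$ and $\om^{(u)}_{ke_2}$, the NE region is $\N^2$, and the claim is nothing but the defining property \eqref{eq:omu}. For the inductive step, two down-right paths differing by a single corner flip at an interior site $(i,j+1)$ look as follows: $\psi$ passes through $(i-1,j+1)\to(i-1,j)\to(i,j)$ while $\psi'$ passes through $(i-1,j+1)\to(i,j+1)\to(i,j)$. By \eqref{eq:patheses}, the two local $L$-values change from $(J^{(u)}_{i-1,j+1}, I^{(u)}_{i,j})$ on $\psi$ to $(I^{(u)}_{i,j+1}, J^{(u)}_{i,j+1})$ on $\psi'$; every other $L$-value along the path is literally unchanged and the NE region loses exactly the site $(i,j+1)$. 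The inductive hypothesis applied to $\psi$ delivers the input independence required by Lemma \ref{lem:parbur} for the triple $(\om_{i,j+1}, I^{(u)}_{i,j}, J^{(u)}_{i-1,j+1})$ and, moreover, its joint independence from all remaining $L$-values on $\psi$ and from the $\om$-weights strictly NE of $\psi'$. Lemma \ref{lem:parbur} then yields the correct marginals for $(I^{(u)}_{i,j+1}, J^{(u)}_{i,j+1})$, and since by \eqref{eq:4} these are measurable functions of that triple, they remain jointly independent of every unchanged variable in the system. This closes the induction and gives the lemma.

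The main conceptual obstacle is recognising the correct induction hypothesis. A literal statement about mutual independence of $\{L_{\psi_i}\}$ alone is not preserved in any self-evident way under a corner flip, because the two $L$-values at the flipped corner are built, through the recursion \eqref{eq:4}, from $\om$'s that also enter the dependence sets of far-away $L$'s on the same path. Tracking the un-revealed NE $\om$'s inside the induction hypothesis decouples the local update from the global configuration and is what lets the purely local statement of Lemma \ref{lem:parbur} propagate from the axis path to arbitrary down-right paths.
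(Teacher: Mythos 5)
Your proof is correct and follows essentially the same route as the paper: the corner-flipping induction starting from the axis path, with Lemma \ref{lem:parbur} supplying the local update at each flip. The only difference is that you make explicit the strengthened induction hypothesis (carrying the unrevealed Ber$(p)$ weights north-east of the path), a detail the paper leaves to the reader but which is indeed what makes the induction close.
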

 
 \begin{proof}
 The proof goes by an inductive ``corner - flipping" argument:  The base case is the path that follows the axes, and there the result follows immediately by the definitions of boundaries. Then we flip the corner at zero, i.e. we consider the down right path 
 \[
 \psi^{(1)} = \{ \ldots, (0,2), (0,1), (1,1), (1,0), (2,0), \ldots \}.
 \]
 Equivalently, we now consider the collection
 $\big\{ \{ J^{(u)}_{0,j} \}_{j \ge 2},\, I^{(u)}_{1,1},  J^{(u)}_{1,1},  \{ I^{(u)}_{i, 0} \}_{i \ge 2} \big\}$.
 The only place where the independence or the distributions may have been violated, is for $I^{(u)}_{1,1}$,  $J^{(u)}_{1,1}$. Lemma \ref{lem:parbur} shows this does not happen. 
 As a consequence, variables on the new path satisfy the assumption of Lemma \ref{lem:parbur}. 
 We can now repetitively use Lemma \ref{lem:drp-bus} by flipping down-right west-south corners into north-east corners. This way, starting from the axes we can obtain any down-right path, while the distributional properties are maintained. The details are left to the reader. 
\end{proof}

For any triplet $(\omega_{i,j}, I^{(u)}_{i-1,j}, J^{(u)}_{i,j-1})$ with $i \ge 1, j\ge 1$, we define the event 
\be \label{eq:theevent}
\mathcal B_{i,j} = \big\{  (\omega_{i,j}, I^{(u)}_{i-1,j}, J^{(u)}_{i,j-1}) \in (1,0,0), (0,1,0), ( 0,0,1), (1,0,1), (1,1,0) \}.
\ee
Using the gradients \eqref{eq:4}, the environment $\{\om_{i,j}\}_{(i,j) \in \N^2}$ and the events $\mathcal B_{i,j}$ we also define new random variables $\alpha_{i,j}$ on $\Z_+^2$
\be
\label{eq:5}
\alpha_{i-1,j-1}= 1\!\!1\{ I^{(u)}_{i,j-1}=J^{(u)}_{i-1,j} =1\} + \beta_{i-1,j-1} 1\!\!1\{ \mathcal B_{i,j}\} \qquad \text{for }(i,j)\in\mathbb{N}^2.
\ee
$\beta_{i-1,j-1}$ is a $\text{Ber}(p)$ random variable and is independent of everything else. Note that $\alpha_{i-1,j-1}$ is automatically $0$ when $\omega_{i,j} =  I^{(u)}_{i,j-1}=J^{(u)}_{i-1,j} = 0$ and check, with the help of Lemma \ref{lem:parbur}, that  $\alpha_{i-1,j-1} \stackrel{D}{=} \omega_{i,j}$. 
The following lemma gives the distribution of the triple 
$(I^{(u)}_{i,j}, J^{(u)}_{i,j}, \alpha_{i-1,j-1})$. It is an analogue of Burke's property for $M/M/1$ queues. 

\begin{lemma}[Burke's property] \label{burke}
Let $(\omega_{i,j}, I^{(u)}_{i,j-1}, J^{(u)}_{i-1,j})$ mutually independent Bernoulli random variables with distributions 
\[
\omega_{i,j} \sim \text{Ber}(p), \quad I^{(u)}_{i,j-1} \sim \text{Ber}(u), \quad J^{(u)}_{i-1,j} \sim \text{Ber}\big( \frac{p(1-u)}{u + p(1-u)}\big).
\]
Then the random variables $(\alpha_{i-1,j-1}, I^{(u)}_{i,j}, J^{(u)}_{i,j})$ are mutually independent with marginal distributions 
 \[
\alpha_{i-1,j-1} \sim \text{Ber}(p), \quad I^{(u)}_{i,j} \sim \text{Ber}(u), \quad J^{(u)}_{i,j} \sim \text{Ber}\big( \frac{p(1-u)}{u + p(1-u)}\big).
\]
\end{lemma}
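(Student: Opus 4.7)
The plan is to verify Burke's property by a direct case analysis, leveraging the fact that Lemma \ref{lem:parbur} has already established the correct marginal distributions of $I^{(u)}_{i,j}$ and $J^{(u)}_{i,j}$ and their mutual independence. What remains is to show that $\alpha_{i-1,j-1}$ has marginal $\mathrm{Ber}(p)$ \emph{and} is independent of the pair $(I^{(u)}_{i,j}, J^{(u)}_{i,j})$. The cleanest way to achieve both at once is to compute the conditional distribution of $\alpha_{i-1,j-1}$ given each of the four possible values of $(I^{(u)}_{i,j}, J^{(u)}_{i,j})$ and verify it is always $\mathrm{Ber}(p)$.

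To set this up, I would first tabulate the map $(\omega_{i,j}, I^{(u)}_{i,j-1}, J^{(u)}_{i-1,j}) \mapsto (I^{(u)}_{i,j}, J^{(u)}_{i,j})$ produced by the recursion \eqref{eq:4}. Because each input is Bernoulli there are only eight cases to list. In short, the output $(0,0)$ is produced by the inputs $(0,0,0)$, $(0,1,1)$, $(1,1,1)$; the output $(1,1)$ only by $(1,0,0)$; the output $(1,0)$ only by $(0,1,0)$ and $(1,1,0)$; and the output $(0,1)$ only by $(0,0,1)$ and $(1,0,1)$. Next I would tabulate the value of $\alpha_{i-1,j-1}$ produced by \eqref{eq:5} in each of these cases, noting that $\alpha_{i-1,j-1} = \beta_{i-1,j-1}$ precisely on the event $\mathcal B_{i,j}$, $\alpha_{i-1,j-1} = 1$ on the event $\{I^{(u)}_{i,j-1} = J^{(u)}_{i-1,j} = 1\}$, and $\alpha_{i-1,j-1} = 0$ on $\{\omega_{i,j} = I^{(u)}_{i,j-1} = J^{(u)}_{i-1,j} = 0\}$.

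With the tables in hand, I would compute $\P\{\alpha_{i-1,j-1}=1 \mid (I^{(u)}_{i,j}, J^{(u)}_{i,j}) = (a',b')\}$ in each of the four cases. Three of them are immediate: whenever the inputs fall in $\mathcal B_{i,j}$, which covers the three output values $(1,1)$, $(1,0)$ and $(0,1)$, the variable $\alpha_{i-1,j-1}$ equals the auxiliary Bernoulli $\beta_{i-1,j-1}$, which is independent of everything else, so the conditional law is automatically $\mathrm{Ber}(p)$. The only nontrivial case is $(I^{(u)}_{i,j}, J^{(u)}_{i,j}) = (0,0)$. Writing $a = u$ and $b = p(1-u)/(u+p(1-u))$, a short computation using the algebraic identity
\begin{equation*}
ab = \frac{up(1-u)}{u+p(1-u)} = p(1-a)(1-b)
\end{equation*}
shows that
\begin{equation*}
\P\{\alpha_{i-1,j-1}=1 \mid (I^{(u)}_{i,j},J^{(u)}_{i,j})=(0,0)\} = \frac{\P\{I^{(u)}_{i,j-1}=J^{(u)}_{i-1,j}=1\}}{\P\{(I^{(u)}_{i,j},J^{(u)}_{i,j})=(0,0)\}} = \frac{ab}{(1-p)(1-a)(1-b) + ab} = p,
\end{equation*}
where the denominator simplifies to $(1-a)(1-b)$ using the same identity.

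The main (and really only) obstacle is keeping the bookkeeping straight in the case analysis and noticing the crucial identity $ab = p(1-a)(1-b)$, which is exactly why the boundary parameter was chosen as it was in \eqref{eq:yaxis}. Once these are in place, the conditional law of $\alpha_{i-1,j-1}$ given $(I^{(u)}_{i,j}, J^{(u)}_{i,j})$ is $\mathrm{Ber}(p)$ regardless of the conditioning value, which simultaneously yields the marginal $\alpha_{i-1,j-1} \sim \mathrm{Ber}(p)$ and its independence from $(I^{(u)}_{i,j}, J^{(u)}_{i,j})$; combining with Lemma \ref{lem:parbur} completes the proof.
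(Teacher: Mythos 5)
Your proposal is correct, and it rests on exactly the same finite case analysis that drives the paper's proof: the paper verifies the factorization $\E\big(g(\alpha_{i-1,j-1})h(I^{(u)}_{i,j})k(J^{(u)}_{i,j})\big)=\E\big(g(\alpha_{i-1,j-1})\big)\E\big(h(I^{(u)}_{i,j})\big)\E\big(k(J^{(u)}_{i,j})\big)$ with test functions by expanding over the same eight input configurations, splitting according to the three clauses in the definition \eqref{eq:5} of $\alpha$. Your conditional-law formulation (checking that the law of $\alpha_{i-1,j-1}$ given $(I^{(u)}_{i,j},J^{(u)}_{i,j})$ is $\text{Ber}(p)$ for every conditioning value, with Lemma \ref{lem:parbur} supplying the joint law of the pair) is a repackaging of that computation and hinges on the same identity $u\cdot\tfrac{p(1-u)}{u+p(1-u)}=p(1-u)\cdot\tfrac{u}{u+p(1-u)}$, which is precisely the cancellation performed in the paper's display.
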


\begin{proof}
Let $g, h, k$ be bounded continuous functions. To simplify the notation slightly, set  $\ell = \ell(u) =  \frac{p(1-u)}{u + p(1-u)}$. In the computation below we use equations \eqref{eq:4} without special mention. 
\begin{align*} 
\mathbb E &(g(\alpha_{i-1, j-1})h(I^{(u)}_{i, j})k(J^{(u)}_{i,j})) \\
&= g(1)\mathbb E\Big(h(I^{(u)}_{i, j})k(J^{(u)}_{i,j})1\!\!1\{\ I^{(u)}_{i,j-1} = J^{(u)}_{i-1,j} =1 \}) \\
&\phantom{xxxxxxx} +g(0) \mathbb E\Big(h(I^{(u)}_{i, j})k(J^{(u)}_{i,j})1\!\!1\{\omega_{i,j} = I^{(u)}_{i,j-1} = J^{(u)}_{i-1,j} = 0\} \Big)\\
&\phantom{xxxxxxxxxxxxxxxxxxxxxxxxx} + \mathbb E\Big(g(\beta_{i,j})h(I^{(u)}_{i, j})k(J^{(u)}_{i,j}) 1\!\!1\{ \cB_{i,j}\}\Big)\\
&= g(1) h(0) k(0) u\ell +g(0) h(0)k(0)(1-p)(1-u)(1-\ell)\\
&\phantom{xxxxxxxxxxxxxx} +\mathbb E(g(\beta_{i,j})\mathbb E\Big(h(I^{(u)}_{i, j})k(J^{(u)}_{i,j}) 1\!\!1\{ \cB_{i,j}\}\Big)\\
&= h(0)k(0)(1-u)(1-\ell) ( pg(1) + (1-p) g(0))\\
&\phantom{xxxxxxxxxxxxxxxxxxxxxxxxx} + \mathbb E(g(\beta_{i,j})) \sum_{x \in \cB_{i,j}}\mathbb E \Big(h(I^{(u)}_{i,j})k(J^{(u)}_{i,j})1\!\!1\{ x \in \cB_{i,j}\}\Big
)\\
&= h(0)k(0)(1-u)(1-\ell) ( pg(1) + (1-p) g(0))\\
&\phantom{xxx} + \mathbb E(g(\beta_{i,j})) \\
&\phantom{xxxxxx}\times \Big( h(1)k(1) p (1-u)(1-\ell) + h(0)k(1)[(1-p)(1-u)\ell + p(1-u)\ell)] \\
&\phantom{xxxxxxxxxxxxxxxxxxxxxxxxxxxxxx}+h(1)k(0)[(1-p)u(1-\ell) + pu(1-\ell)] \Big)\\
&= h(0)k(0)(1-u)(1-\ell) (pg(1) + (1-p) g(0))\\
&\phantom{xxx} + \mathbb E(g(\beta_{i,j})) \Big( h(1)k(1) u\ell + h(0)k(1)(1-u)\ell +h(1)k(0)u(1-\ell)\Big)\\
&=(pg(1) + (1-p) g(0)) \mathbb E( h(I^{(u)}_{i,j}) )\mathbb E(k(J^{(u)}_{i,j}))\\
&=\mathbb E( g(\alpha_{i-1,j-1}) ) \mathbb E( h(I^{(u)}_{i,j}) )\mathbb E(k(J^{(u)}_{i,j})). \qedhere
\end{align*}
\end{proof}

 The last necessary preliminary step is a corollary of Lemma \ref{burke} which generalises Lemma \ref{lem:drp-bus} by incorporating the random variables $\{ \alpha_{i-1, j-1}\}_{i,j \ge 1}$. To this effect, for any down-right path $\psi$ satisfying \eqref{eq:psi},  define the interior sites  $\mathcal I _{\psi}$ of $\psi$ to be 
\be \label{eq:interior}
\mathcal I _{\psi} = \{ w \in \Z^2_+: \exists\, v_i \in \psi \text{ s.t. } \,w < v_i \text{ coordinate-wise} \}. 
\ee
Then
\begin{corollary} \label{cor:downrightpath}
	Fix a down-right path $\psi$ and recall definitions \eqref{eq:patheses}, \eqref{eq:interior}. The random variables 
	\[
	\{ \{\alpha_w\}_{w \in \mathcal I _{\psi}}, \{L_{\psi_i}\}_{i \in \Z} \}
	\]
	are mutually independent, with marginals 
	\[
	\alpha_{w} \sim \textrm{Ber(p)}, \quad  L_{\psi_i} \sim 
\begin{cases}
	\textrm{Ber}(u), & \textrm{if } \psi_i = e_1\\
	\textrm{Ber}\Big( \frac{p(1-u)}{ u + p(1-u) } \Big), & \text{if } \psi_i = -e_2. 
\end{cases}
	\]
\end{corollary}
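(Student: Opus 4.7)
I would prove this by extending the corner-flipping induction already used in the proof of Lemma \ref{lem:drp-bus}, but now also tracking the $\alpha$ variables attached to newly interior sites. Since independence of a (possibly infinite) family of random variables is equivalent to independence of every finite subfamily, it is enough to prove the claim for a finite truncation of $\psi$, and any such truncation can be reached from the axes path in finitely many corner-flip moves of the following shape: a south--west corner through vertices $(i-1,j), (i-1,j-1), (i,j-1)$ of $\psi$ is replaced by the north--east corner $(i-1,j), (i,j), (i,j-1)$. Geometrically, the site $(i-1,j-1)$ is moved from the path into $\mathcal I_{\psi'}$, which creates the new variable $\alpha_{i-1,j-1}$; the edge variables $(J^{(u)}_{i-1,j}, I^{(u)}_{i,j-1})$ disappear from the path collection and are replaced by $(I^{(u)}_{i,j}, J^{(u)}_{i,j})$; everything else is left untouched.

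The base case is $\psi$ equal to the axes path, where $\mathcal I_\psi = \emptyset$ and the assertion is precisely Lemma \ref{lem:drp-bus}. For the inductive step, I would assume the conclusion of the corollary holds for the current $\psi$ and analyse the effect of a single flip at $(i-1,j-1)$. Lemma \ref{burke} applied with the bulk weight $\omega_{i,j}$ and the input gradients $(I^{(u)}_{i,j-1}, J^{(u)}_{i-1,j})$ immediately gives that the new triple $(\alpha_{i-1,j-1}, I^{(u)}_{i,j}, J^{(u)}_{i,j})$ is mutually independent with the required Bernoulli marginals.

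To upgrade this to independence from the \emph{surviving} part of the old collection, the key observation is that $(\alpha_{i-1,j-1}, I^{(u)}_{i,j}, J^{(u)}_{i,j})$ is a deterministic function of the four ingredients $(\omega_{i,j}, \beta_{i-1,j-1}, I^{(u)}_{i,j-1}, J^{(u)}_{i-1,j})$. The first two are fresh bulk and auxiliary variables that never appeared in the old collection, and the last two are exactly the variables being removed by the flip. By the inductive hypothesis the survivors are independent of $(J^{(u)}_{i-1,j}, I^{(u)}_{i,j-1})$, and by construction they are independent of $(\omega_{i,j}, \beta_{i-1,j-1})$. Thus the survivors are independent of the whole 4-tuple, hence independent of any function of it. Combining this with Lemma \ref{burke} promotes mutual independence from the pre-flip to the post-flip collection.

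The one delicate point, and the step I would verify most carefully, is that the previously created $\alpha_w$ for $w \in \mathcal I_\psi$ really do not depend on the fresh $\omega_{i,j}$ or $\beta_{i-1,j-1}$, and that none of the surviving edge variables of the old path coincides with $(J^{(u)}_{i-1,j}, I^{(u)}_{i,j-1})$. Both facts are immediate from the geometry of the flip together with the definitions \eqref{eq:5} and \eqref{eq:patheses}: the older $\alpha$'s are built from bulk weights at strictly earlier, northwest lattice sites, and the edge variables $(J^{(u)}_{i-1,j}, I^{(u)}_{i,j-1})$ are attached precisely to the two path edges that disappear under the flip. Iterating the flip finitely many times yields the claim for any finite truncation of $\psi$, and hence for $\psi$ itself.
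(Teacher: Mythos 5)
Your proposal is correct and follows exactly the route the paper intends: the paper omits the proof, saying only that it is ``similar to that of Lemma \ref{lem:drp-bus}'', i.e.\ the corner-flipping induction with Lemma \ref{burke} replacing Lemma \ref{lem:parbur} at each flip, which is precisely what you carry out. Your bookkeeping of the surviving variables (the new triple is a function of the two removed edge variables plus the fresh $\omega_{i,j}$ and $\beta_{i-1,j-1}$, which the survivors never see) is the right way to make the omitted details rigorous; the only quibble is the phrase ``northwest lattice sites'' for the bulk weights entering the older $\alpha_w$'s, which is imprecise but harmless since all that matters is that those sites differ from $(i,j)$.
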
 
The proof is similar to that of Lemma \ref{lem:drp-bus} and we omit it.

\subsection{Law of large numbers for the boundary model.}

\begin{proof}[Proof of Theorem \ref{thm:LLNG}] 
From equations \eqref{eq:gradients} we can write 
\[
G^{(u)}_{\fl{Ns}, \fl{Nt}} = \sum_{j=1}^{\fl{Nt}}J^{(u)}_{0, j} + \sum_{i=1}^{\fl{Ns}}I^{(u)}_{i, \fl{Nt}} 
\]
since the $I, J$ variables are increments of the passage time. By the definition of the boundary model, the variables are i.i.d.\ Ber$(p(1-u)/(u+p(1-u))$. Scaled by $N$, the first sum converges to $t \E(J_{0,1})$ by the law of large numbers.

By Corollary \ref{cor:downrightpath} they are i.i.d.\ Ber$(u)$, since they belong on the down-right path that follows the vertical axes from $\infty$ down to $(0, \fl{Nt})$ and then moves horizontally. We cannot immediately appeal to the law of large numbers as the whole sequence changes with $N$ so we first appeal to the Borel-Cantelli lemma via a large deviation estimate. Fix an $\e>0$. 
\begin{align*}
\P\Big\{ N^{-1} \sum_{i=1}^{\fl{Ns}}I^{(u)}_{i, \fl{Nt}} \notin (u-\e, u+ \e) \Big\} &=\P\Big\{ N^{-1} \sum_{i=1}^{\fl{Ns}}I^{(u)}_{i, 0} \notin ( su-\e, su+ \e) \Big\} \\
& \le e^{-c(u, s ,\e)N},
\end{align*}
for some appropriate positive constant $c(u, s, \e)$. By the Borel-Cantelli lemma we have that for each $\e>0$ there exists a random $N_{\e}$ so that for all $N > N_\e$
\[
su - \e < N^{-1}\sum_{i=1}^{\fl{Ns}}I^{(u)}_{i, \fl{Nt}} \le su + \e.
\]
Then we have 
\[
su + t\, \frac{p(1-u)}{ u + p(1-u) }  - \e \le \varliminf_{N \to \infty} \frac{G^{(u)}_{\fl{Ns}, \fl{Nt}}}{N} \le \varlimsup_{N \to \infty} \frac{G^{(u)}_{\fl{Ns}, \fl{Nt}}}{N} \le su + t\, \frac{p(1-u)}{ u + p(1-u) }  + \e.
\]
Let $\e$ tend to 0 to finish the proof. 
\end{proof}

\subsection{ Law of large numbers for the i.i.d.\ model}

\begin{proof}[Proof of Theorem \ref{thm:LLNp}]

Let $g_{pp}(s,t) = \lim_{N \to \infty} N^{-1} G_{\fl{Ns}, \fl{Nt}}$ and denote by $g_{pp}^{(u)}(s,t) =  \lim_{N \to \infty} N^{-1} G^{(u)}_{\fl{Ns}, \fl{Nt}}$.
Recall that $g_{pp}(s,t)$ is 1-homogeneous and concave. 

The starting point is equation \eqref{eq:varform}. Scaling that equation by $N$ gives us the macroscopic variational formulation 
\begin{align}
g_{pp}^{(u)}&(1,1) \notag\\
&=\sup_{0\leq z\leq1}\{g_{pp}^{(u)}(z,0)+g_{pp}(1-z,1)\}\bigvee\sup_{0\leq z\leq1}\{g_{pp}^{(u)}(0, z)+g_{pp}(1,1-z)\} \notag\\
&= \sup_{0\leq z\leq1}\{z \E(I^{(u)})+g_{pp}(1-z,1)\}\bigvee\sup_{0\leq z\leq1}\{ z \E(J^{(u)})+g_{pp}(1,1-z)\}.\label{eq:maxpath}
\end{align}

We postpone this bit of the proof until the end. Assume \eqref{eq:maxpath} holds. We treat the case for which the first supremum in \eqref{eq:maxpath} is larger than the second. Since we are studying a symmetric model, this assumption is equivalent to consider $t\geq s$. We obtain the other case performing the transformation $t=s$.
Abbreviate $g_{pp}(1-z,1) = \psi(1-z)$ and substitute $ g_{pp}^{(u)}(1,1) $ using Theorem \ref{thm:LLNG}. Then 
\be\label{eq:13}
u+\frac{p(1-u)}{ u + p(1-u) }=\sup_{0\leq z\leq1}\{zu+\psi(1-z)\}.
\ee
Set $x = 1 -z$. $x$ still ranges in $[0, 1]$ and after a rearrangement of the terms, we obtain
\be
-\frac{p(1-u)}{ u + p(1-u) }=\inf_{0\leq x\leq1}\{xu-\psi(x)\}.
\ee
The expression on the right-hand side is the Legendre transform of $\psi$, and we have that its concave dual $\psi^{*}(u)= -\frac{p(1-u)}{ u + p(1-u) }$ with $u\in(0,1]$.
Since $\psi(x)$ is concave, the Legendre transform of $\psi^*$ will give back $\psi$, i.e. $\psi^{**} = \psi$. Therefore, 
\begin{align}
g_{pp}(x, 1) = \psi(x)&=\psi^{**}(x)=\inf_{0<u \le1}\{xu-\psi^*(u)\}=\inf_{0<u\leq1}\Big\{xu+\frac{p(1-u)}{ u + p(1-u) }\Big\}\notag\\
&= \inf_{0<u\leq1}\big\{x\E(I^{(u)})+\E(J^{(u)})\big\}, \quad \text{ for all } x \in [0, 1]. \label{eq:14}
\end{align}
Since $g_{pp}(s,t) = t g_{pp}(st^{-1}, 1)$, the first equality in \eqref{eq:busopt} is now verified. 
For the second equality, we solve the variational problem \eqref{eq:14}. The derivative of the expression in the braces has a critical point 
$u^* \le 1$ only when $p<x < 1$. In that case,  the infimum is achieved at 
\[
u^* =\frac{1}{1-p}\Big(\sqrt{\frac{p}{x}}-p\Big)
\]
and $g_{pp}(x, 1) =1/(1-p)[2\sqrt{xp}-p(x+1)]$. Otherwise, when $x \le p$ the derivative in $(0, 1)$ is always negative and the minimum occurs at $u^* = 1$. This gives
$g_{pp}(x, 1) = x$. Again, extend to all  $(s, t)$ via the relation $g_{pp}(s,t) = t g_{pp}(st^{-1}, 1)$. This concludes the proof for the explicit shape under \eqref{eq:maxpath} which we now prove.

For a lower bound, fix any $z \in [0, 1]$. Then 
\[
G^{(u)}_{N, N} \ge \sum_{i=1}^{\fl{Nz}} I^{(u)}_{i, 0} + G_{(\fl{Nz}, 1), (N, N)}. 
\]
Divide through by $N$. The left hand side converges  a.s.\ to $g^{(u)}_{pp}(1,1)$. the first term on the right converges a.s.\ to $z \E(I^{u})$. The second on the right, converges in probability to the constant $g_{pp}(1-z, 1)$. In particular, we can find a subsequence $N_k$ such that the convergence is almost sure for the second term. Taking limits on this subsequence, we conclude 
\[
g_{pp}^{(u)}(1,1) \ge z \E(I^{(u)}) + g_{pp}(1-z, 1).
\]  
Since $z$ is arbitrary we can take supremum over  $z$ in both sides of the equation above. 
The same arguments will work if we move on the vertical axis. Thus, we obtain the lower bound for \eqref{eq:maxpath}. 
For the upper bound, fix $\e>0$ and let $\{ 0 =q_0, \e=q_1, 2\e=q_2, \ldots, \fl{\e^{-1}}\e,  1=q_M \}$ a partition of $(0,1)$. We partition both axes. 
The maximal path that utilises $G^{(u)}_{N,N}$ has to exit between $\fl{Nk\e}$ and $\fl{N(k+1)\e}$ for some $k$. Therefore,  
we may write 
\begin{align*}
G^{(u)}_{N, N} &\le \max_{0 \le k \le \fl{\e^{-1}}}\bigg\{\sum_{i=1}^{\fl{N(k+1)\e}} I^{(u)}_{i, 0} + G_{(\fl{Nk\e}, 1), (N, N)}\bigg\}\\
&\phantom{xxxxxxxxxxx}\bigvee \max_{0 \le k \le \fl{\e^{-1}}}\bigg\{\sum_{j=1}^{\fl{N(k+1)\e}} J^{(u)}_{0, j} + G_{1, (\fl{Nk\e}), (N, N)}\bigg\}.
\end{align*}
Divide by $N$. The right-hand side converges in probability to the constant 
\begin{align*}
 &\max_{0 \le k \le \fl{\e^{-1}}}\{(k+1)\e u + g_{pp}(1-\e k, 1)\}\\
&\phantom{xxxxxxx}\bigvee \max_{0 \le k \le \fl{\e^{-1}}}\bigg\{(k+1)\e\frac{p(1-u)}{ u + p(1-u) }+ g_{pp}(1, 1-\e k) \bigg\}\\
&=\max_{q_k }\{q_k u + g_{pp}(1-q_k, 1)\} + \e u\\
&\phantom{xxxxxxx}\bigvee \max_{q_k}\bigg\{q_k \frac{p(1-u)}{ u + p(1-u) }+ g_{pp}(1, 1-q_k) \bigg\} +\e\frac{p(1-u)}{ u + p(1-u) }\\
&\le \sup_{0 \le z \le 1}\{z u + g_{pp}(1-z, 1)\} + \e u\\
&\phantom{xxxxxxx}\bigvee \max_{0\le z \le 1}\bigg\{z \frac{p(1-u)}{ u + p(1-u) } + g_{pp}(1, 1- z) \bigg\} +\e\frac{p(1-u)}{ u + p(1-u) }.
\end{align*}
The convergence becomes a.s. on a subsequence. The upper bound for \eqref{eq:maxpath} now follows by letting $\e \to 0$ in the last inequality.   
\end{proof}

In the following sections, either when the explicit dependence on $u$ is not important  or when it is not necessary and there will be no confusion, we omit the superscripts $(u)$ from the gradients $I, J$ without a particular mention. 

 \section{Upper bound for the variance in characteristic directions} 
\label{sec:varub}
 We follow the approach of \cite{balazs2006cube, Sep-12} in order to find  the order of the variance. All the difficulties and technicalities in our case arise from two facts: First that the random variables are discrete and small perturbations in the distribution do not alter the value of the random weight. Second, we have three potential steps to contest with rather than then usual two.
 
%

 
\subsection{The variance formula}

Let $(m,n)$ be a generic lattice site. Eventually we will define how $m, n$ grow to infinity using the parameter $u$. Define the passage time increments (labelled by compass directions) by 
\[
\cW=G_{0,n}^{(u)}-G_{0,0}^{(u)},\quad\cN=G_{m,n}^{(u)}-G_{0,n}^{(u)},\quad\cE=G_{m,n}^{(u)}-G_{m,0}^{(u)},\quad\cS=G_{m,0}^{(u)}-G_{0,0}^{(u)}.
\]
From Corollary \ref{cor:downrightpath}  we get that each of $\cW, \cN, \cE$ and $\cS$ is a sum of i.i.d.\ random variables and most importantly, $\cN$ 
is independent of $\cE$ and $\cW$ is independent of $\cS$ by the definition of the boundary random variables. From the definitions it is immediate to show the cocycle property for the whole rectangle $[m]\times [n]$ 
\be \label{eq:coprop}
\cW + \cN = G^{(u)}_{m,n} = \cS + \cE. 
\ee

We can immediately attempt to evaluate the variance of $G^{(u)}_{m,n}$ using this relations, by
\begin{align}
\Var(G^{(u)}_{m,n})&=\Var(\cW+\cN)\nonumber\\
&=\Var(\cW)+\Var(\cN)+2\Cov(\cS+\cE-\cN,\cN)\notag\\
&=\Var(\cW)-\Var(\cN)+2\Cov(\cS,\cN), \label{eq:var1}
\end{align}
Equivalently, one may use $\cE$ and $\cS$ to obtain 
\be \label{eq:var2}
\Var(G^{(u)}_{m,n}) =\Var(\cS)-\Var(\cE)+2\Cov(\cE,\cW).
\ee

In the sequence, when several Bernoulli parameters will need to be considered simultaneously, will add a superscript $(u)$ on the quantities $\cN, \cE, \cW, \cS $   to explicitly denote dependance on parameter $u$.

The covariance is not an object that can be computed directly, so the biggest proportion of this subsection is dedicated in finding a different way to compute the covariance that also allows for estimates and connects fluctuations of the maximal path with fluctuations of the last passage time. 

In the exponential exactly solvable model there is a clear expression for the covariance term. Unfortunately this does not happen here, so we must estimate the order of magnitude. This is the content of the next proposition.

\begin{proposition}\label{lem:exit} Fix $0 < u \le 1$. 
There exist functions $A_{\cN^{(u)}}$, $A_{\cE^{(u)}}$ such that for any $m,n \in \N$ we have

\be
\label{eq:var3}
\begin{aligned}
\Var(G_{m,n}^{(u)})&= n\frac{pu(1-u)}{[u+p(1-u)]^2}-mu(1-u)+2u(1-u)A_{\cN^{(u)}}\\
&= mu(1-u)-n\frac{pu(1-u)}{[u+p(1-u)]^2}-2u(1-u) A_{\cE^{(u)}}.
\end{aligned}
\ee
\end{proposition}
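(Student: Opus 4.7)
My plan is to compute the four boundary variances explicitly and substitute them into the cocycle-based decompositions (\ref{eq:var1}) and (\ref{eq:var2}) already derived just above. The only subtlety is handling the north and east sums, which depend on the bulk environment; this is done via Corollary \ref{cor:downrightpath}.

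By definition of the boundary model, $\cS = \sum_{i=1}^{m} I^{(u)}_{i,0}$ and $\cW = \sum_{j=1}^{n} J^{(u)}_{0,j}$ are sums of i.i.d.\ Bernoulli variables with respective parameters $u$ and $q := p(1-u)/[u+p(1-u)]$, so $\Var(\cS) = mu(1-u)$ and $\Var(\cW) = nq(1-q) = npu(1-u)/[u+p(1-u)]^{2}$. For $\cN$ and $\cE$ I would apply Corollary \ref{cor:downrightpath} to the down-right path $\psi$ that runs down the ray $\{0\}\times[n,\infty)$ to $(0,n)$, east across $[0,m]\times\{n\}$ to $(m,n)$, then down $\{m\}\times(-\infty,n]$. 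The horizontal leg contributes the summands $\{I^{(u)}_{i,n}\}_{i=1}^{m}$ forming $\cN$ and the lower vertical leg contributes the summands $\{J^{(u)}_{m,j}\}_{j=1}^{n}$ forming $\cE$; the corollary asserts these are all mutually independent with the declared Bernoulli marginals, so $\Var(\cN) = mu(1-u)$, $\Var(\cE) = nq(1-q)$, and $\cN\perp\cE$. Plugging into (\ref{eq:var1})--(\ref{eq:var2}) produces the two claimed identities once one sets
\[
A_{\cN^{(u)}} := \frac{\Cov(\cS,\cN)}{u(1-u)}, \qquad A_{\cE^{(u)}} := -\frac{\Cov(\cE,\cW)}{u(1-u)}.
\]

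Although the statement only asserts existence of $A_{\cN^{(u)}}, A_{\cE^{(u)}}$, for the variance bounds of Section \ref{sec:varub} one needs a probabilistic handle on them, which I would provide by identifying each with an expected exit point. Using the cocycle $\cN = G^{(u)}_{m,n} - \cW$ together with $\cS\perp\cW$ gives $\Cov(\cS,\cN) = \Cov(\cS, G^{(u)}_{m,n})$. Splitting $\cS = \sum_{i=1}^{m} X_i$ with $X_i := I^{(u)}_{i,0}\sim\mathrm{Ber}(u)$ independent of everything else, and applying to each summand the elementary Bernoulli identity $\Cov(X_i,F) = u(1-u)(\E[F \mid X_i=1] - \E[F \mid X_i=0])$, yields
\[
A_{\cN^{(u)}} = \sum_{i=1}^{m}\bigl(\E[G^{(u)}_{m,n}\mid X_i=1] - \E[G^{(u)}_{m,n}\mid X_i=0]\bigr) = \sum_{i=1}^{m}\P\{\xi^{(u)}_{e_1}\geq i\} = \E[\xi^{(u)}_{e_1}],
\]
and an analogous computation delivers $A_{\cE^{(u)}} = -p\,\E[\xi^{(u)}_{e_2}]/[u+p(1-u)]^{2}$.

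The main technical point is the middle equality in the last display. The increment $G^{(u)}_{m,n}|_{X_i=1} - G^{(u)}_{m,n}|_{X_i=0}$ lies in $\{0,1\}$ because the LPP is $1$-Lipschitz in each weight, and one must show it equals $1$ exactly on the event $\{\xi^{(u)}_{e_1}\geq i\}$, and crucially that this event is measurable with respect to the remaining randomness (in particular insensitive to the actual value of $X_i$). Because the environment is Bernoulli there are always ties among maximal paths, so this insensitivity is not automatic; the right-most maximal path convention introduced after Theorem \ref{thrm:voffchar} is exactly the tool needed to pin down ``some max path uses $(i,0)$'' as an event independent of the $i$-th boundary weight. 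Once this monotonicity/uniqueness fact is in hand, the proposition is immediate.
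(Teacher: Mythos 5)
Your proof of the proposition as stated is correct and is essentially the paper's own argument: the decompositions \eqref{eq:var1}--\eqref{eq:var2}, the i.i.d.\ Bernoulli structure of $\cS,\cW,\cN,\cE$ supplied by Corollary \ref{cor:downrightpath}, and the definitions $A_{\cN^{(u)}}=\Cov(\cS,\cN)/(u(1-u))$, $A_{\cE^{(u)}}=-\Cov(\cE,\cW)/(u(1-u))$ yield \eqref{eq:var3} immediately. (The paper's displayed proof goes on to re-express these covariances as $\e$-derivatives of expectations under a perturbed boundary parameter -- the content of Proposition \ref{prop:exitbound2} -- which is not needed for the existence statement itself.)

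However, the exact identification you announce afterwards, $A_{\cN^{(u)}}=\E[\xi^{(u)}_{e_1}]$, is false, and the ``insensitivity to $X_i$'' that you single out as the main technical point genuinely fails; the right-most path convention resolves ties but does not decouple the event $\{\xi^{(u)}_{e_1}\ge i\}$ from $\om_{i,0}$. If $\Delta_i$ denotes the increment of $G^{(u)}_{m,n}$ when $\om_{i,0}$ is flipped from $0$ to $1$ with all other weights fixed, then $\Delta_i=\mathbbm{1}\{M_i\ge M_i'\}$, where $M_i$ is the best passage time over paths through $(i,0)$ with that weight counted as $0$ and $M_i'$ the best over paths avoiding $(i,0)$; this is the event $\{\xi^{(u)}_{e_1}\ge i\}$ evaluated in the environment with $\om_{i,0}$ set to $0$, not in the unconditioned one. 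Consequently
\[
A_{\cN^{(u)}}=\sum_{i=1}^m\P\{\xi^{(u)}_{e_1}\ge i\mid\om_{i,0}=0\}=\frac{1}{1-u}\,\E\Big(\sum_{i=1}^{\xi^{(u)}_{e_1}}(1-\om_{i,0})\Big),
\]
and since a $1$ at $(i,0)$ biases the geodesic toward the axis, each conditional probability is strictly smaller than $\P\{\xi^{(u)}_{e_1}\ge i\}$ in general. Already for $m=n=1$, where $G^{(u)}_{1,1}=\max\{\om_{1,0},\om_{0,1}+\om_{1,1}\}$, a direct computation gives $A_{\cN^{(u)}}=(1-p)(1-q)$ with $q=p(1-u)/(u+p(1-u))$, whereas $\E[\xi^{(u)}_{e_1}]=(1-p)(1-q)+u\,[q(1-p)+p(1-q)]$. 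This is precisely why the paper does not prove an identity of your form but instead perturbs the boundary parameter and derives the bounds of Lemma \ref{lem:Abound} and the identity \eqref{eq:b-om} (your analogous exact formula for $A_{\cE^{(u)}}$ has the same defect). None of this affects the validity of your proof of \eqref{eq:var3}, but the later variance bounds cannot be run off $\E[\xi^{(u)}_{e_1}]$ via the identity you propose.
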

The result is proved by perturbing the parameter on one of the boundaries. Throughout the proof, the endpoint $(m,n)$ and the parameter $u$ are fixed. Pick an $\e>0$ and define a new parameter $u_\e$ so that $  u_\e = u+ \e  < 1$.  The only way this is not possible is when $u = 1$. If that's the case, $G^{(1)}_{m,n} = m$ is deterministic and the variance is zero. Equation \eqref{eq:var3} remains true as the right-hand side is a multiple of $(1-u)$.

For any fixed realization of $\om^{(u)} =  \{ \om^{(u)}_{i,0}, \om^{(u)}_{0,j}, \om^{(u)}_{i,j} \}$ with marginal distributions \eqref{eq:omu} we use the parameter $\e$ to modify the weights on the south boundary only. 
Define new bernoulli weights $\om^{\lambda_\e}$ via the conditional distributions
\begin{align}
&\P\{ \om^{u_\e}_{i,0} = 1 | \om^{(u)}_{i,0} = 1 \} = 1, \notag \\
&\P\{ \om^{u_\e}_{i,0} = 1 | \om^{(u)}_{i,0} = 0 \} = \frac{\e}{1-u}, \label{eq:bincoup}\\
&\P\{ \om^{u_\e}_{i,0} = 0 | \om^{(u)}_{i,0} = 0 \} = 1 -\frac{\e}{1-u}, \notag
\end{align}
i.e. we go through the values on the south boundary, and given the environment returned a 0, we change the value to a 1 with probability $\e$. Then $\{ \om^{u_\e}_{i,0} \}_{1 \le i \le m}$ is a collection of independent Ber$(u_\e)$ r.v. It is convenient to introduce an algebraic mechanism to construct $\om^{u_\e}$ directly. To this effect introduce a sequence of independent Bernoulli random variables $\mathscr H^{(\e)}_i \sim \text{Ber}\big(\frac{\e}{1-u} \big)$, $1 \le i \le m$ that are independent of the $\om^{(u)}$. Denote their joint distribution by $\mu_\e$. Then construct $\om^{u_\e}$ the following way: 
\be \label{eq:om-B}
\om^{u_\e}_{i,0} =  \mathscr H^{(\e)}_i \vee \om^{(u)}_{i,0} .
\ee
Check that \eqref{eq:om-B} satisfies \eqref{eq:bincoup}. It also follows that  
\be\label{eq:Berbnd}
\om^{u_\e}_{i,0}  - \om^{(u)}_{i,0} \le  \mathscr H^{(\e)}_i.
\ee
Under this modified environment, 
\be\label{eq:omper}
\om^{u_\e}_{i, 0} \sim \textrm{Ber}(u_\e), \quad \om^{(u)}_{i,j} \sim \textrm{Ber}(p), \quad \om^{(u)}_{0,j}\sim\textrm{Ber}\Big( \frac{p(1-u)}{ u + p(1-u) } \Big), 
\ee
the passage time is denoted by $G^{u_\e}$ and when we are referring to quantities in this model we will distinguish them with a superscript $u_\e$. With these definitions we have $\cS^{u_\e} \sim \text{Bin}(m, u+ \e)$, with mass function denoted by $f_{\cS^{u_\e}}(k) = \P\{ \cS^{u_\e} = k\}$, $0 \le k \le m$. 

Similarly, there will be instances for which we want to perturb  only the weights of the vertical axis, again when the parameter will change from  $u$ to $u+\e$. 
In that case, we denote the modified environment by $\cW^{u_\e}$ and it is given by 
\be\label{eq:omper2}
\om^{(u)}_{i, 0} \sim \text{Ber}(u), \quad \om^{(u)}_{i,j} \sim \text{Ber}(p), \quad \om^{u_\e}_{0,j}\sim \text{Ber}\Big( \frac{p(1-u-\e)}{ u+\e + p(1-u-\e) } \Big), 
\ee

Again, we use auxiliary i.i.d.\ Bernoulli variables $\{ \mathscr V^{(\e)}_j \}_{1 \le j \le n}$ with 
\[
 \mathscr V_j^{(\e)} \sim \text{Ber}\Big(1 - \e \frac{1 + u(1-p)}{(1-u)(p + u(1-p)) + (1-p)\e}\Big), 
\]
where we assume that $\e$ is sufficiently small so that the distributions are well defined. Then, the perturbed  weights on the vertical axis are defined by 
\be\label{eq:ammcs}
\om^{u_\e}_{0,j} = \om^{(u)}_{0,j}\cdot \mathscr V_j^{(\e)}.
\ee
Denote by $\nu_\e$ the joint distribution of $\mathscr V_j^{(\e)}$.
It will also be convenient to couple the environments with different parameters. In that case we use common realizations of i.i.d.\ Uniform$[0,1]$ random variables $\eta = \{\eta_{i,j}\}_{(i,j) \in \Z^2_+}$. The Bernoulli environment in the bulk is then defined as
\[
\om_{i,j} = \mathbbm1\{ \eta_{i,j} < p\}
\]  
and similarly defined for the boundary values. The joint distribution for the uniforms we denote by $\P_\eta$.

\begin{proposition}\label{prop:exitbound2}
The following bounds in terms of the right-most exit points of the maximal paths hold  
\be
A_{\cN^{(u)}}= \begin{cases}
\Cov(\cS^{(u)}, \cN^{(u)}) = \displaystyle\lim_{\e \to 0}\frac{ \E_{\P \otimes\mu_\e }( \cN^{u_\e} - \cN^{(u)})}{\e}, & 0<u<1\\
0& u=0,1.
\end{cases}
\ee
Similarly, 
\be
A_{\cE^{(u)}}= \begin{cases}
\Cov(\cW^{(u)}, \cE^{(u)}) = \displaystyle\lim_{\e \to 0}\frac{ \E_{\P \otimes\nu_\e }( \cE^{u_\e} - \cE^{(u)})}{\e}, & 0<u<1\\
0& u=0,1.
\end{cases}
\ee
\end{proposition}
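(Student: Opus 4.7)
The plan is to follow the change-of-measure / perturbation approach of \cite{balazs2006cube, Sep-12}. The statement has two separable parts: first, identifying $A_{\cN^{(u)}}$ from the variance decomposition \eqref{eq:var3} with a covariance of boundary increments; second, expressing that covariance as a derivative at $\e = 0$ of a mean computed under a perturbation of the south boundary parameter.

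For the first identification, Corollary \ref{cor:downrightpath} gives that $\cW^{(u)}$ and $\cN^{(u)}$ are sums of i.i.d.\ Bernoullis lying on down-right paths, so $\Var(\cW^{(u)}) = npu(1-u)/[u+p(1-u)]^2$ and $\Var(\cN^{(u)}) = mu(1-u)$. Substituting these into \eqref{eq:var1} and matching with \eqref{eq:var3} isolates $A_{\cN^{(u)}}$ as $\Cov(\cS^{(u)}, \cN^{(u)})/(u(1-u))$, with the factor $u(1-u)$ being the one that appears explicitly in front of $A_{\cN^{(u)}}$ in \eqref{eq:var3}. The corresponding identification of $A_{\cE^{(u)}}$ is analogous, using \eqref{eq:var2} against the second line of \eqref{eq:var3}.

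For the derivative representation, I would exploit the coupling \eqref{eq:om-B}, under which $\om^{u_\e}$ and $\om^{(u)}$ differ only on the south boundary. This gives $G^{u_\e}_{0,n} = G^{(u)}_{0,n}$, and hence $\cN^{u_\e} - \cN^{(u)} = G^{u_\e}_{m,n} - G^{(u)}_{m,n}$. Since only the Bernoulli south boundary law depends on $\e$, write
\[
  \E_{\P \otimes \mu_\e}\bigl[G^{u_\e}_{m,n}\bigr] \;=\; \E_u\Bigl[G_{m,n} \prod_{i=1}^{m} \bigl(\tfrac{u+\e}{u}\bigr)^{\om_{i,0}} \bigl(\tfrac{1-u-\e}{1-u}\bigr)^{1-\om_{i,0}}\Bigr],
\]
whose integrand is a polynomial in $\e$ of degree at most $m$ with coefficients bounded (deterministically) by $G_{m,n} \le m+n$. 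Differentiation in $\e$ and expectation thus commute, and a direct expansion of the product shows the score at $\e = 0$ is $L_0' = (\cS - mu)/(u(1-u))$. Using $\E[\cS - mu] = 0$ yields
\[
  \lim_{\e \to 0}\frac{\E_{\P \otimes \mu_\e}\bigl[\cN^{u_\e} - \cN^{(u)}\bigr]}{\e} \;=\; \E_u\Bigl[G_{m,n}\cdot \tfrac{\cS - mu}{u(1-u)}\Bigr] \;=\; \frac{\Cov(G_{m,n}, \cS^{(u)})}{u(1-u)}.
\]
Decomposing $G_{m,n} = \cW + \cN$ and invoking $\cW \perp \cS$ (both lie on the down-right path along the axes, by Corollary \ref{cor:downrightpath}) collapses this to $\Cov(\cN^{(u)}, \cS^{(u)})/(u(1-u))$, matching the expression for $A_{\cN^{(u)}}$ from the first step.

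The statement for $A_{\cE^{(u)}}$ is proved by the symmetric argument, perturbing the west boundary through the coupling \eqref{eq:ammcs} and using $G^{u_\e}_{m,0} = G^{(u)}_{m,0}$ together with $\cE \perp \cN$ on the NE down-right path. The chain rule produces an extra factor $dq/du$ with $q = p(1-u)/(u+p(1-u))$, which is negative in $u$; this accounts for the sign asymmetry between the two lines of \eqref{eq:var3}. The degenerate cases $u = 0, 1$ are immediate: the south boundary becomes deterministic, $\cS^{(u)}$ is constant, and both the covariance and the limit vanish, so $A_{\cN^{(u)}} = 0$ is consistent with \eqref{eq:var3} directly. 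I expect no genuine obstacle beyond keeping the bookkeeping of the $u(1-u)$ normalization straight; crucially, the polynomial structure of the Bernoulli Radon--Nikodym derivative in $\e$ makes the derivative/expectation interchange a purely algebraic step, bypassing any dominated-convergence delicacy that could arise from the discrete environment.
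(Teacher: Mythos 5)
Your argument is correct and arrives at the same identity as the paper, but by a slightly different mechanism, so a brief comparison is in order. The paper's proof (given under Proposition \ref{lem:exit}) exploits sufficiency: since the sum $\cS^{u_\e}$ is a sufficient statistic for the boundary parameter, $E[\cN^{u_\e}\mid \cS^{u_\e}=k]=E[\cN^{(u)}\mid \cS^{(u)}=k]$, so the difference of means becomes a finite sum of these conditional expectations against differences of binomial probabilities, and differentiating the binomial pmf in $u$ produces the score $(k-mu)/(u(1-u))$ and hence \eqref{eq:covV1}. You instead differentiate the Radon--Nikodym factor of the entire south-boundary law and then need the additional (true, and stated in the paper) independence $\cW\perp\cS$ to pass from $\Cov(G_{m,n},\cS)$ to $\Cov(\cN,\cS)$; your interchange of derivative and expectation is indeed harmless, as everything is a polynomial in $\e$ over a finite probability space. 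Both routes hinge on the same Bernoulli score, so this is a variant rather than a genuinely new idea: your version is more mechanical but invokes $\cW\perp\cS$, while the sufficiency trick avoids decomposing $G_{m,n}$ altogether. Two bookkeeping points, both in your favour: (i) your normalization $A_{\cN^{(u)}}=\Cov(\cS^{(u)},\cN^{(u)})/(u(1-u))$ is the one consistent with \eqref{eq:var1}, \eqref{eq:var3} and the paper's own computation \eqref{eq:covV1}, whereas the proposition's display drops the factor $1/(u(1-u))$; (ii) for $A_{\cE^{(u)}}$ your chain-rule factor is exactly $q'(u)/[q(u)(1-q(u))]=-1/(u(1-u))$ with $q(u)=p(1-u)/(u+p(1-u))$, which is precisely what yields both the minus sign and the common $u(1-u)$ normalization in the second line of \eqref{eq:var3}; you should write out this one-line computation rather than only noting that the factor is negative.
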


\begin{proof}[Proof of Proposition \ref{lem:exit}]

The conditional joint distribution of $(\om^{u_\e}_{i,0})_{1 \le i \le m}$ given the value of their sum $\cS^{u+\e}$ is independent of the parameter $\e$. This is because the sum of i.i.d.\ Bernoulli is a sufficient statistic for the parameter of the distribution. In particular this implies that 
$
E[ \cN^{u+\e} | \cS^{u+\e}  = k]$ $= E_{\P \otimes\mu_\e }[ \cN^{(u)} | \cS^{(u)} = k]. 
$
For clarity, we added the superscript $(u)$ on the background measure $\P$ to emphasise that it is the measure on environment  $\om^{(u)}$.

Then we can compute the $\E(\cN^{u+\e})$
\begin{align}
\E_{\P \otimes\mu_\e }( \cN^{u_\e} - \cN^{(u)})&= \sum_{k=0}^m E[\cN^{u_\e} | \cS^{u_\e}=k] \P\otimes\mu_\e\{ \cS^{u_\e} = k\} - \E_{\P}(\cN^{(u)}) \notag\\
&=\sum_{k=0}^mE[\cN^{(u)}| \cS^{(u)}=k]\P\otimes\mu_\e\{ \cS^{u_\e} = k\}  - \E_{\P}(\cN^{(u)})\notag\\
&= \sum_{k=0}^mE[\cN^{(u)}| \cS^{(u)}=k]\big( \P\otimes\mu_\e\{ \cS^{u_\e} = k\}  - \P\{ \cS^{(u)} = k\} \big) \label{eq:northdiff}
\end{align}
To show that the limits in the statement are well defined, it suffices to compute
\begin{align*}
\lim_{\e \to 0}& \frac{ \P\otimes\mu_\e\{ \cS^{u_\e} = k\}  - \P\{ \cS^{(u)} = k\} }{\e} \\
&= {m \choose k} \lim_{\e \to 0}\frac{(u+\e)^k(1 - u -\e)^{m-k} - u^k(1 - u)^{m-k}}{\e}\\
&= {m \choose k} \frac{d}{du} u^k(1 - u)^{m-k} = {m \choose k} \frac{k - mu}{u(1-u)} u^k(1 - u)^{m-k}.
\end{align*}
Combine this with \eqref{eq:northdiff} to obtain
\begin{align}
\displaystyle\lim_{\e \to 0}\frac{ \E_{\P \otimes\mu_\e }( \cN^{u_\e} - \cN^{(u)})}{\e}  &= \frac{1}{u(1-u)} \sum_{k=0}^m E[\cN^{(u)} | \cS^{(u)}=k] k \P\{ \cS^{(u)} =k\}\notag \\
&\phantom{xxxxxxx} -  \frac{mu}{u(1-u)} \sum_{k=0}^m E[\cN^{(u)} | \cS^{(u)}=k] \P\{ \cS^{(u)}=k\} \notag \\
&= \frac{1}{u(1-u)}\Big(\E(\cN^{(u)}\cS^{(u)}) - \E(\cN^{(u)})\E(\cS^{(u)})\Big) \notag\\
&= \frac{1}{u(1-u)}\Cov(\cN^{(u)}, \cS^{(u)}). \label{eq:covV1}
\end{align}
Identical symmetric arguments, prove the remaining part of the proposition.
\end{proof}

For the rest of this proof, we prove the estimates on $A_{\cN^{(u)}}$ by estimating the covariance in a different way.

Fix any boundary site $w = (w_1, w_2) \in \{ (i,0), (0, j): 1 \le i \le m, 1\le j \le n\}$. The total weight in environment $\om$ collected on the boundaries by a path that exits from the axes at $w$ is  
\be \label{eq:bndw}
\mathscr{S}_{w} = \sum_{i=1}^{w_1} \om_{i,0} +   \sum_{j=1}^{w_2} \om_{0, j},
\ee
where the empty sum takes the value $0$. Let $\mathscr S^{(u)}$ be the above sum in environment $\om^{(u)}$ and let 
$\mathscr S^{u_e}$ denote the same, but in environment \eqref{eq:omper}.

Recall that $\xi_{e_1}$ is the rightmost exit point of any potential maximal path from the horizontal boundary, since it is the exit point of the right-most maximal path.  Similarly, if $\xi_{e_2} > 0$ the it is the down-most possible exit point. When the dependence on the parameter $u$ is important, we put superscripts $(u)$ to denote that.

\begin{lemma} Let $\xi_{e_1}$ be the exit point of the maximal path in environment $\om^{(u)}$. Let $\cN^{u_\e}$ denote the last passage increment in environment \eqref{eq:omper} of the north boundary and $\sS_{\xi_{e_1}}^{u_\e}$ the weight collected on the horizontal axis in the same environment, but only up to the exit point of the maximal path in environment $\om^{(u)}$. $\cN^{(u)}$ $\sS_{\xi_{e_1}}^{(u)}$ are the same quantities in environment $\om^{(u)}$. Then
\be \label{eq:exn-n1}
\E_{\P\otimes\mu_\e}(\sS_{\xi_{e_1}}^{u_\e} - \sS^{(u)}_{\xi_{e_1}}) \le \E_{\P\otimes\mu_\e}(\cN^{u_\e} - \cN^{(u)} ) \le \E_{\P\otimes\mu_\e}(\sS_{\xi_{e_1}}^{u_\e} - \sS^{(u)}_{\xi_{e_1}}) + C(m,u,p) \e^{3/2}.
\ee

Similarly, in environments \eqref{eq:omper2} and $\om^{(u)}$,
\be \label{eq:exn-m}
\E_{\P\otimes\nu_\e}(\sS_{\xi_{e_2}}^{u_\e} - \sS^{(u)}_{\xi_{e_2}}) \ge \E_{\P\otimes\nu_\e}(\cE^{u_\e} - \cE^{(u)} ) \ge \E_{\P\otimes\nu_\e}(\sS_{\xi_{e_2}}^{u_\e} - \sS^{(u)}_{\xi_{e_2}}) - C(n,u,p) \e^{4/3}.
\ee
\end{lemma}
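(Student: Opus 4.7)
My plan is to sandwich $\E[\cN^{u_\e}-\cN^{(u)}]$ (and the analogous $\cE$ quantity) by comparing passage times via test-path substitutions in the coupled environment, and then quantify the shift in the exit point. Since $\cW$ only sees the west boundary, which is unchanged by $\mu_\e$, I have $\cN^{u_\e}-\cN^{(u)} = G^{u_\e}_{m,n}-G^{(u)}_{m,n}$. Using the $\om^{(u)}$-right-most maximal path $\hat\pi^{(u)}$, with south exit $\xi^{(u)}_{e_1}$, as a test path in the perturbed environment gives, since the two environments agree off the south axis,
\[
G^{u_\e}_{m,n} \;\ge\; G^{(u)}_{m,n} + \sS^{u_\e}_{\xi^{(u)}_{e_1}} - \sS^{(u)}_{\xi^{(u)}_{e_1}},
\]
and taking expectations yields the lower bound in \eqref{eq:exn-n1}.

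For the upper bound I reverse the roles: using the $\om^{u_\e}$-maximal path $\hat\pi^{u_\e}$, with south exit $\xi^{u_\e}_{e_1}$, as a test path in $\om^{(u)}$ gives $G^{u_\e}_{m,n}-G^{(u)}_{m,n}\le \sS^{u_\e}_{\xi^{u_\e}_{e_1}} - \sS^{(u)}_{\xi^{u_\e}_{e_1}}$. After adding and subtracting the analogous quantity at $\xi^{(u)}_{e_1}$ and taking expectations, the upper bound in \eqref{eq:exn-n1} reduces to
\[
\E_{\P\otimes\mu_\e}\!\Bigl[\bigl(\sS^{u_\e}_{\xi^{u_\e}_{e_1}} - \sS^{(u)}_{\xi^{u_\e}_{e_1}}\bigr)-\bigl(\sS^{u_\e}_{\xi^{(u)}_{e_1}} - \sS^{(u)}_{\xi^{(u)}_{e_1}}\bigr)\Bigr]\;\le\; C(m,u,p)\,\e^{3/2}.
\]
By \eqref{eq:Berbnd} this correction is a signed sum of $\mathscr H^{(\e)}_i\mathbbm 1\{\om^{(u)}_{i,0}=0\}$ over the random window between the two exit points, so monotonicity of the right-most south exit in the boundary parameter (which I would verify inductively from the recursion \eqref{eq:LPPrec}) collapses the window to $(\xi^{(u)}_{e_1},\xi^{u_\e}_{e_1}]$ and makes the correction non-negative.

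The main obstacle is this error estimate. The natural split is by $N_\e=\sum_i\mathscr H^{(\e)}_i$: on $\{N_\e\ge 2\}$, which has probability $O(m^2\e^2)$, the crude bound by $N_\e$ contributes a lower-order term; the delicate case is $\{N_\e=1\}$, where the correction is 0 or 1 and equals 1 only if the uniformly located flipped coordinate lies in the (random) window $(\xi^{(u)}_{e_1},\xi^{u_\e}_{e_1}]$. Obtaining the $\e^{3/2}$ rate requires a quantitative monotone-coupling estimate showing that this coincidence has the appropriate conditional probability; I would derive it by combining \eqref{eq:LPPrec} with the $N^{2/3}$ transversal fluctuation bounds of Theorem \ref{thm:pathflucts} in order to bound how far the right-most south exit can move under a single axis flip. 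Estimate \eqref{eq:exn-m} for the west boundary follows by the mirror argument applied to the coupling \eqref{eq:ammcs}, with the inequalities reversed because that perturbation \emph{decreases} the west weights; the slightly different exponent $\e^{4/3}$ stems from the asymmetric weight-collection rule at the west axis in \eqref{eq:varform} (the additional bulk weight $\om_{1,k}$ picked up upon leaving the boundary), which forces a different combinatorial count for how a single change on the axis can shift the exit.
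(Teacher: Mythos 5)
Your lower bound (the left inequality of \eqref{eq:exn-n1}) via the old optimal path as a test path in the perturbed environment is fine, and matches the easy half of the paper's argument. The gap is in your upper bound. Comparing at the \emph{new} exit point and then reducing to
$\E\big[(\sS^{u_\e}_{\xi^{u_\e}_{e_1}}-\sS^{(u)}_{\xi^{u_\e}_{e_1}})-(\sS^{u_\e}_{\xi^{(u)}_{e_1}}-\sS^{(u)}_{\xi^{(u)}_{e_1}})\big]\le C\e^{3/2}$
cannot work: that quantity is genuinely of order $\e$, not $\e^{3/2}$. Indeed, condition on exactly one flip, at site $i_0>\xi^{(u)}_{e_1}$ with $\om^{(u)}_{i_0,0}=0$. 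Since $\xi^{(u)}_{e_1}$ is the right-most exit point, every alternative exit $k>\xi^{(u)}_{e_1}$ has an integer deficit $\ge 1$; a single unit gain moves the right-most exit past $i_0$ precisely when some $k\ge i_0$ has deficit exactly $1$ (a tie is created, the optimum value does not change, but the right-most maximizer jumps). This is an $\e$-free event of positive probability in the $\om^{(u)}$-environment, so summing over $i_0$ the single-flip contribution to your correction is $\asymp \e\,\E[D]$ with $\E[D]>0$ independent of $\e$. No transversal-fluctuation input can rescue this (the window $(\xi^{(u)}_{e_1},\xi^{u_\e}_{e_1}]$ is not independent of the flip location; it is nonempty exactly because the flip landed where it did), and in any case invoking Theorem \ref{thm:pathflucts} here would be circular, since that theorem is proved downstream of this lemma via Corollary \ref{cor:CC}. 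Note also that an $O(\e)$ error with a nonvanishing constant is not merely a weaker rate: it would survive the division by $\e$ in Proposition \ref{prop:exitbound2}/Lemma \ref{lem:Abound}, so the lemma's intended use would fail.

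The paper avoids this loss by never leaving the old exit point: it splits on whether the old exit remains optimal in the perturbed environment. On the tie event ($\mathcal A_2$) one has the exact identity $\cN^{u_\e}-\cN^{(u)}=\sS^{u_\e}_{\xi_{e_1}}-\sS^{(u)}_{\xi_{e_1}}$ (respectively for $\cE$), so the troublesome order-$\e$ events contribute no error at all. Only the event $\mathcal A_1$ that the old exit is \emph{strictly} beaten carries an error, and there integrality plus right-most maximality force a swing of at least $2$, hence at least two flipped boundary weights, so $\P(\mathcal A_1)=O(\e^2)$; a H\"older/Cauchy--Schwarz step then yields the $\e^{3/2}$ (resp.\ $\e^{4/3}$) terms. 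Incidentally, the two exponents are just artifacts of the moment/H\"older choices made in the two cases, not of the different weight-collection rule on the west axis as you conjecture; any $o(\e)$ rate suffices for the application. Your monotonicity claim for the right-most exit under the coupling \eqref{eq:om-B} is plausible and provable, but it controls the sign of your correction, not its size, which is where the argument breaks.
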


\begin{proof}
We only prove \eqref{eq:exn-m}. Same arguments work for \eqref{eq:exn-n1}. Modify the weights on the vertical axis and create environment $\cW^{u_\e}$ given by \eqref{eq:omper2}. 
The first inequality in equation \eqref{eq:exn-n} follows by first noting that
\be \label{eq:ommm}
\cE^{u_\e} - \cE^{(u)}   \le \cW^{u_\e} - \cW \le 0. 
\ee
The left inequality in \eqref{eq:exn-m} is then immediate, because the modification decreases all weights on the west boundary by \eqref{eq:ammcs}. 
To see equation \eqref{eq:ommm}, do a double induction on $m, n$ using equations \eqref{eq:4} and the cocycle property \eqref{eq:coprop}, starting from the first corner square. 

The remaining proof is to establish the second inequality in \eqref{eq:exn-m}. 
Consider the event $\{ \xi^{u_{\e}} \neq \xi \}$. Since we only modify weights on the vertical axis, the exit point $\xi$ of the original maximal path 
will be different from $\xi^{u_\e}$ only if $\xi^{u_\e} = \xi^{u_\e}_{e_2}$. Moreover, since the modification actually decreases the weights, one of two things may happen:
\begin{enumerate}
\item $\xi^{u_\e} \neq \xi$ and $\sS^{u_\e}_{\xi^{u_\e}_{e_2}} + G_{(1, \xi^{u_\e}_{e_2}),(m,n)} > \sS^{u_\e}_{\xi_{e_2}} + G_{(1, \xi_{e_2}), (m,n)}$, or
\item 
  $\xi^{u_\e} \neq \xi$ and $\sS^{u_\e}_{\xi^{u_\e}_{e_2}} + G_{(1, \xi^{u_\e}_{e_2}),(m,n)} = \sS^{u_\e}_{\xi_{e_2}} + G_{(1, \xi_{e_2}), (m,n)}$ 
\end{enumerate}

We use these cases to define two auxiliary events: 
\begin{align*}
\mathcal{A}_{1}&= \{ \xi^{u_\e} \neq \xi \text{ and } \sS^{u_\e}_{\xi^{u_\e}_{e_2}} + G_{(1, \xi^{u_\e}_{e_2}),(m,n)} > \sS^{u_\e}_{\xi_{e_2}} + G_{(1, \xi_{e_2}), (m,n)} \},\\
\mathcal{A}_{2}&= \{   \xi^{u_\e} \neq \xi \text{ and } \sS^{u_\e}_{\xi^{u_\e}_{e_2}} + G_{(1, \xi^{u_\e}_{e_2}),(m,n)} = \sS^{u_\e}_{\xi_{e_2}} + G_{(1, \xi_{e_2}), (m,n)} \}
\end{align*}
and note that $
\{ \xi^{u_{\e}} \neq \xi \} = \mathcal{A}_{1}\cup\mathcal{A}_{2}$. On $\mathcal{A}_2$ we can bound
\begin{align*}
\cE^{u_\e} - \cE^{(u)}  &= G^{u_\e}_{m,n} -  G^{(u)}_{m,n} =  \sS^{u_\e}_{\xi_{e_2}} + G_{(1, \xi_{e_2}), (m,n)} -  \sS^{(u)}_{\xi_{e_2}} - G_{(1, \xi_{e_2}), (m,n)}\\
&= \sS^{u_\e}_{\xi_{e_2}} - \sS^{(u)}_{\xi_{e_2}}.
\end{align*}
Then we estimate 
\begin{align}
\cE^{u_\e} - \cE^{(u)}  &= (\cE^{u_\e}-\cE^{(u)} )\cdot \mathbbm{1} \{\xi^{u_\e}= \xi \}+(\cE^{u_\e}-\cE^{(u)} )\cdot \mathbbm{1}\{\xi^{u_\e}\neq \xi\}\notag \\
&=(\sS_\xi^{u_\e}-\sS^{(u)}_\xi)\cdot \mathbbm{1}\{\xi^{u_\e}= \xi \}+(\cE^{u_\e}-\cE^{(u)} )\cdot (\mathbbm{1}_{\mathcal A_1} + \mathbbm{1}_{\mathcal A_2} )\label{eq:hm0} \\
&\ge (\sS^{u_\e}_{\xi_{e_2}}-\sS^{(u)}_{\xi_{e_2}})+(\cE^{u_\e}-\cE^{(u)} )\cdot \mathbbm{1}_{\mathcal A_1}. \label{eq:n-n}
\end{align}
The last inequality is justified the following way: Only the weights on the vertical axis were changed-actually decreased. Therefore, if the maximal path chose to move horizontally before the modification, it would do so after and the first term in \eqref{eq:hm0} must be 0. The first term may not equal zero only when the maximal path takes a vertical first step before the modification. On the event  $\mathbbm{1}\{\xi^{u_\e}= \xi \}$ the bound in \eqref{eq:n-n} still holds.

To bound the second term of \eqref{eq:n-n}, we use H\"older's inequality with exponents $p = 3, q = 3/2$ to obtain 
\be \label{eq:pxibound}
\E_{\P\otimes\nu_\e}((\cE^{(u)} -\cE^{u_\e})\cdot \mathbbm{1}_{\mathcal A_1})\le \E_{\P\otimes\nu_\e}((\cE^{(u)} -\cE^{u_\e})^3)^{1/3}({\P\otimes\nu_\e}\{ \mathcal A_1\})^{2/3}.
\ee
The first expectation on the right is bounded above $C(u,p)n$ since $\cE^{(u)} $ is a sum of i.i.d.\ Bernoulli random variables that bounds above $\cE^{(u)} -\cE^{u_\e}$.

Now to bound the probability. 
 Consider the equality of events 
\begin{align*}
\mathcal{A}_{1}&= \{  \sS^{u_\e}_{k} + G_{(1,k),(m,n)} > \sS^{u_\e}_{\xi_{e_2}} + G_{(1, \xi_{e_2}), (m,n)} \text{ for some } 0\le k\neq \xi \le n \}\\
&= \{  \sS^{u_\e}_{k} - \sS^{u_\e}_{\xi_{e_2}} >  G_{(1, \xi_{e_2}), (m,n)} -G_{(1,k),(m,n)} \text{ for some } 0\le k\neq \xi \le n \}\\
&= \{  \sS^{u_\e}_{k} - \sS^{u_\e}_{\xi_{e_2}} >  G_{(1,\xi_{e_2}), (m,n)} -G_{(1,k),(m,n)} \ge  \sS^{(u)}_{k} - \sS^{(u)}_{\xi_{e_2}} \\
&\phantom{xxxxxxxxoxxxxxxxxxxxxxoxxxxxoxxxxxx}\text{ for some } 0\le k\neq \xi \le n \}.
\end{align*}
Coupling \eqref{eq:ammcs} implies that the events above are empty when $k > \xi_{e_2}$. Therefore, consider the case $\xi_{e_2} > k$. In that case, since $\xi_{e_2}$ is the down-most possible exit point, the second inequality in the event above can be strict as well. Thus 
\[
\mathcal A_1 \subseteq \bigcup_{(k,i): 0 \le k <  i \le n} \{  \sS^{u_\e}_{k} - \sS^{u_\e}_{i} >  G_{(1,i), (m,n)} -G_{(1,k),(m,n)} >  \sS^{(u)}_{k} - \sS^{(u)}_{i} \}.
\] 
The strict inequalities in the event and the fact that these random variables are integer, we see that the difference  
$\sS^{u_\e}_{k} - \sS^{u_\e}_{i} - \sS^{(u)}_{k} + \sS^{(u)}_{i} \ge 2$. This way, 
\begin{align}
2 &\le \sS^{u_\e}_{k} - \sS^{u_\e}_{i} - \sS^{(u)}_{k} + \sS^{(u)}_{i} = - \sum_{j=k+1}^i \om^{u_\e}_{0,j} + \sum_{j=k+1}^i \om^{(u)}_{0,j}\notag\\
&= \sum_{j=k+1}^i \Big(  \om^{(u)}_{0,j} - \om^{u_\e}_{0,j} \Big) \quad \text{by \eqref{eq:ammcs}}\notag\\
&\le  \sum_{j=0}^n \Big(  \om^{(u)}_{0,j} - \om^{u_\e}_{0,j} \Big) =   \sum_{j=0}^n   \om^{(u)}_{0,j}\big(1 - \mathscr V^{(\e)}_j \big) = \mathscr W_\e. \label{eq:halellu}
\end{align}
 $\mathscr W_\e$ is defined by the last equality above and we therefore just showed $\mathcal A_1 \subseteq \{ \mathscr W_\e \ge 2 \}$.

The event $\{ \mathscr W_\e \ge 2 \}$ holds if at least 2 indices $j$ satisfy with $  \om^{(u)}_{0,j}\big(1 - \mathscr V^{(\e)}_j \big)  = 1$. 
By definition \eqref{eq:halellu} we have that $\mathscr W_\e$ is binomially distributed with probability of success $C\e$ under $\P\otimes \nu_\e$ and therefore, in order to have at least two successes, 
\be
{\P\otimes\nu_\e}\{ \mathscr W_\e \ge 2 \} \le C(n, u) \e^2.  \label{eq:pxibound}
 \ee
Combine \eqref{eq:n-n} and \eqref{eq:pxibound} to conclude  
\be \label{eq:exn-n}
\E_{\P\otimes\nu_\e}(\cE^{u_\e} - \cE^{(u)} ) \ge \E_{\P\otimes\nu_\e}(\sS_{\xi_{e_2}}^{u_\e} - \sS^{(u)}_{\xi_{e_2}}) - C(n,u) \e^{4/3}. \qedhere
\ee

\end{proof}

\begin{lemma}\label{lem:Abound} 
Let $0 < u < 1$. Then, 
\be
A_{\cN^{(u)}} \le \frac{\E(\xi^{(u)}_{e_1})}{1-u}, \quad \text{ and } \quad A_{\cE^{(u)}}\ge - \frac{p(1+u(1-p))}{(u+p(1-u))^2}\E(\xi^{(u)}_{e_2})
\ee
\end{lemma}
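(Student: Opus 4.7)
The plan is to combine Proposition \ref{prop:exitbound2} with the inequalities \eqref{eq:exn-n1} and \eqref{eq:exn-m} proved in the previous lemma, and then exploit the explicit Bernoulli couplings \eqref{eq:om-B} and \eqref{eq:ammcs} that relate the perturbed environments to $\om^{(u)}$. In both cases the idea is the same: reduce the expected passage-time increment to a random sum of auxiliary perturbation variables whose marginal expectations are explicit, and then invoke a Wald-type identity using the independence of these auxiliaries from the exit point of the maximal path in the original environment.

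For the first bound, Proposition \ref{prop:exitbound2} together with \eqref{eq:exn-n1} gives
\[
A_{\cN^{(u)}} \;\le\; \liminf_{\e\to 0}\e^{-1}\,\E_{\P\otimes\mu_\e}\big(\sS^{u_\e}_{\xi_{e_1}}-\sS^{(u)}_{\xi_{e_1}}\big),
\]
since the $C\e^{3/2}$ error vanishes after dividing by $\e$. The additive coupling \eqref{eq:om-B} combined with \eqref{eq:Berbnd} bounds $\sS^{u_\e}_{\xi_{e_1}}-\sS^{(u)}_{\xi_{e_1}} \le \sum_{i=1}^{\xi^{(u)}_{e_1}}\mathscr H^{(\e)}_i$ pointwise. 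Crucially, the $\mathscr H^{(\e)}_i$ are i.i.d.\ $\mathrm{Ber}(\e/(1-u))$ and independent of $\om^{(u)}$, hence of $\xi^{(u)}_{e_1}$. Wald's identity then yields $\E\big[\sum_{i=1}^{\xi^{(u)}_{e_1}}\mathscr H^{(\e)}_i\big] = \e\,\E(\xi^{(u)}_{e_1})/(1-u)$, and dividing by $\e$ produces exactly the claimed inequality.

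For the second bound, Proposition \ref{prop:exitbound2} and \eqref{eq:exn-m} give analogously $A_{\cE^{(u)}} \ge \limsup_{\e\to 0}\e^{-1}\,\E_{\P\otimes\nu_\e}(\sS^{u_\e}_{\xi_{e_2}}-\sS^{(u)}_{\xi_{e_2}})$. The multiplicative coupling \eqref{eq:ammcs} gives $\om^{u_\e}_{0,j}-\om^{(u)}_{0,j}=-\om^{(u)}_{0,j}(1-\mathscr V^{(\e)}_j)$. Independence of $\mathscr V^{(\e)}$ from $\xi^{(u)}_{e_2}$ allows me to factor
\[
\E\Big[\sum_{j=1}^{\xi^{(u)}_{e_2}}\om^{(u)}_{0,j}(1-\mathscr V^{(\e)}_j)\Big]\;=\;\P(\mathscr V^{(\e)}=0)\;\E\Big[\sum_{j=1}^{\xi^{(u)}_{e_2}}\om^{(u)}_{0,j}\Big].
\]
The explicit formula for $\mathscr V^{(\e)}$ gives $\P(\mathscr V^{(\e)}=0)/\e \to (1+u(1-p))/[(1-u)(p+u(1-p))]$ as $\e\to 0$. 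Then I would control the remaining random sum by $\E\big[\sum_{j=1}^{\xi^{(u)}_{e_2}}\om^{(u)}_{0,j}\big] \le q\,\E(\xi^{(u)}_{e_2})$ with $q = p(1-u)/(u+p(1-u))$, using the distributional invariance provided by Corollary \ref{cor:downrightpath}. Combining the two factors and simplifying (using $u+p(1-u)=p+u(1-p)$) produces the claimed prefactor $p(1+u(1-p))/(u+p(1-u))^2$.

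The main obstacle lies in the second bound. The additive coupling for the horizontal axis cleanly decouples $\mathscr H^{(\e)}$ from $\om^{(u)}$, so Wald's identity applies immediately and gives the sharp constant. The multiplicative coupling \eqref{eq:ammcs}, however, leaves the factor $\om^{(u)}_{0,j}$ inside the expectation, and $\om^{(u)}_{0,j}$ is positively correlated with $\{j\le\xi^{(u)}_{e_2}\}$: the crude bound $\om^{(u)}_{0,j}\le 1$ is off by a factor of $q^{-1}$ from the advertised inequality. The sharper constant must therefore come from a Burke-type identification that makes the boundary weights along the right-most maximal path behave, in expectation, as if independent of the exit level; cleanly extracting this identity from Corollary \ref{cor:downrightpath} is the crux of the argument.
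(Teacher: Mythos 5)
Your route is the paper's route. For the first inequality your argument coincides with the paper's proof verbatim in substance: start from the right inequality in \eqref{eq:exn-n1}, use \eqref{eq:Berbnd} to dominate $\sS_{\xi_{e_1}}^{u_\e}-\sS^{(u)}_{\xi_{e_1}}$ by $\sum_{i\le \xi^{(u)}_{e_1}}\mathscr{H}^{(\e)}_i$, and exploit the independence of $\{\mathscr{H}^{(\e)}_i\}$ from $\om^{(u)}$ (hence from $\xi^{(u)}_{e_1}$) to get $\e\,\E(\xi^{(u)}_{e_1})/(1-u)$; the paper does exactly this by conditioning on $\{\xi_{e_1}=y\}$, which is the same Wald-type computation. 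For the second inequality, your factorization of $\E_{\nu_\e}(1-\mathscr{V}^{(\e)}_j)$ out of $\E\big[\sum_{j\le\xi^{(u)}_{e_2}}\om^{(u)}_{0,j}(1-\mathscr{V}^{(\e)}_j)\big]$ is also exactly the paper's step.

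The obstacle you flag at the end is genuine, and you should be aware that the paper's own proof does not resolve it: at the corresponding step the paper bounds $\E_{\P}\big(\om^{(u)}_{0,j}\mathbbm{1}\{\xi^{(u)}_{e_2}=k\}\big)\le\P\{\xi^{(u)}_{e_2}=k\}$ (the crude $\om\le 1$ bound you mention), and then the final displayed line nevertheless carries the extra factor $p(1-u)/(u+p(1-u))$ that produces the advertised prefactor. With the crude bound one only obtains $A_{\cE^{(u)}}\ge -\tfrac{1+u(1-p)}{(1-u)(u+p(1-u))}\,\E(\xi^{(u)}_{e_2})$, which is weaker by exactly the factor $q^{-1}$ you computed. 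Moreover, the ``Burke-type'' rescue you hope for is not available: Corollary \ref{cor:downrightpath} gives independence of boundary increments along deterministic down-right paths, not along the random exit level, and since $\om^{(u)}_{0,j}=1$ makes the event $\{\xi^{(u)}_{e_2}\ge j\}$ more likely, the correlation pushes $\E\big[\sum_{j\le\xi^{(u)}_{e_2}}\om^{(u)}_{0,j}\big]$ above $q\,\E(\xi^{(u)}_{e_2})$, not below it. So the sharp constant in the statement is justified neither by your sketch nor by the paper's computation; what both arguments honestly prove is the bound with the larger constant $C(u,p)=\tfrac{1+u(1-p)}{(1-u)(u+p(1-u))}$. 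This is harmless for the rest of the paper: the lemma is used (in the proof of Proposition \ref{yextibound} and of the variance upper bound in Theorem \ref{thm:varub}) only through an estimate of the form $|A_{\cE^{(u)}}|\le C(u,p)\,\E(\xi^{(u)}_{e_2})$, for which the weaker constant suffices. If you want a clean write-up, prove the lemma with that weaker constant (your crude bound plus the exact asymptotics $\E_{\nu_\e}(1-\mathscr{V}^{(\e)}_j)\sim \e\,\tfrac{1+u(1-p)}{(1-u)(u+p(1-u))}$), and note that only the order in $\E(\xi^{(u)}_{e_2})$ is used later.
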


\begin{proof}
Now we bound the first term. Compute 
\begin{align*}
 \E_{\P\otimes\nu_\e}(\sS_{\xi_{e_1}}^{u_\e} - \sS^{(u)}_{\xi_{e_1}}) &=\sum_{y = 1}^m E\Big[\sS_{y}^{u_\e} - \sS^{(u)}_{y}\Big | \xi_{e_1} = y\Big]\P\{ \xi_{e_1} = y\}\\
 & \le \sum_{y = 1}^m E\Big[\sum_{i=1}^y \mathscr H_i^{(\e)} \Big | \xi_{e_1} = y\Big]\P\{ \xi_{e_1} = y\}, \text{ from \eqref{eq:Berbnd}},\\
  & =\sum_{y = 1}^m \E_{\mu_\e}\Big[\sum_{i=1}^y \mathscr H_i^{(\e)} \Big]\P\{ \xi_{e_1} = y\}, \text{ since $\mathscr H_i$, $\om^{(u)}$ independent,}\\
  &= \e \frac{\E(\xi_{e_1})}{1-u}.
\end{align*}
Now substitute in \eqref{eq:exn-n}, divide through by $\e$ and take the limit as $\e \to 0$ to obtain 
\[
 \lim_{\e \to 0} \frac{\E(\cN^{\lambda_\e} - \cN)}{\e} \le \frac{\E(\xi^{(u)}_{e_1})}{1-u}. 
\]
For the second bound, write \allowdisplaybreaks
\begin{align*}
\E_{\P\otimes\nu_\e}&(\cE^{u+\e} - \cE^{(u)}) \ge \E_{\P\otimes\nu_\e}(\sS^{u+\e}_{\xi^{(u)}_{e_2}} - \sS^{(u)}_{\xi^{(u)}_{e_2}}) + o(\e)\\
&=\E_{\P\otimes\nu_\e}\Big(\sum_{j=1}^{\xi^{(u)}_{e_2}} \om^{u+\e}_{0,j} -\om^{(u)}_{0,j}\Big)+ o(\e) = - \E_{\P\otimes\nu_\e}\Big(\sum_{j=1}^{\xi^{(u)}_{e_2}} \om^{(u)}_{0,j}( 1- \mathscr V^{(\e)}_j)\Big)+ o(\e) \\
&= - \sum_{k=1}^{n} \sum_{j=1}^{k}  \E_{\P\otimes\nu_\e}\big(\om^{(u)}_{0,j}( 1- \mathscr V^{(\e)}_j) \mathbbm1\{ \xi_{e_2}^{(u)} = k \}\big)+ o(\e) \\
&= - \sum_{k=1}^{n} \sum_{j=1}^{k}  \E_{\P}\big(\om^{(u)}_{0,j}\mathbbm1\{ \xi_{e_2}^{(u)} = k \}\big) \E_{\nu_\e}(1- \mathscr V^{(\e)}_j)+ o(\e) \\
&\ge - \sum_{k=1}^{n} \sum_{j=1}^{k} \P\{ \xi_{e_2}^{(u)} = k \} E_{\nu_\e}(1- \mathscr V^{(\e)}_j)+ o(\e) \\
& = - \e \E_{\P}( \xi_{e_2}^{(u)}) \cdot \frac{(1+u(1-p))(p(1-u))}{(u+p(1-u))((1-u)(u + p(1-u)) +(1-p)\e)} + o(\e).
\end{align*}
Divide both sides of the inequality by $\e$ and let it tend to 0.
\end{proof}

\begin{lemma} Let $0 < r_1 < r_2 < 1$ and let $\xi^{(r_i)}$ the corresponding right-most  (resp. down-most) exit points for the maximal paths in environments coupled by common uniforms $\eta$. Then 
\[
\xi^{(r_1)}_{e_1} \le \xi^{(r_2)}_{e_1} \text{ and }\, \xi^{(r_1)}_{e_2} \ge \xi^{(r_2)}_{e_2}.
\]
\end{lemma}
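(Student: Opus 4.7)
The plan is to exploit the monotonicity of the coupled boundary environments under the common-uniform coupling $\eta$, combined with the variational representation \eqref{eq:varform}. Under $\eta$ one has $\om^{(r)}_{i,0}=\mathbbm{1}\{\eta_{i,0}<r\}$ nondecreasing in $r$, and $\om^{(r)}_{0,j}=\mathbbm{1}\{\eta_{0,j}<p(1-r)/(r+p(1-r))\}$ nonincreasing in $r$, since the threshold $p(1-r)/(r+p(1-r))$ decreases in $r$; the bulk weights do not depend on $r$. Consequently, for $0<r_1<r_2<1$ the partial sums
\[
D_H(k):=\sum_{i=1}^k\bigl(\om^{(r_2)}_{i,0}-\om^{(r_1)}_{i,0}\bigr),\qquad D_V(\ell):=\sum_{j=1}^\ell\bigl(\om^{(r_1)}_{0,j}-\om^{(r_2)}_{0,j}\bigr)
\]
are both nonnegative and nondecreasing in their argument.

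Next I will recast \eqref{eq:varform} as $G^{(r)}_{m,n}=\max\bigl(\max_{k\ge 1}A(k,r),\;\max_{\ell\ge 1}C(\ell,r)\bigr)$, where $A(k,r)=\sum_{i=1}^k\om^{(r)}_{i,0}+B_H(k)$ and $C(\ell,r)=\sum_{j=1}^\ell\om^{(r)}_{0,j}+B_V(\ell)$, with $B_H(k)$ and $B_V(\ell)$ random variables depending only on bulk weights (hence $r$-independent under the coupling $\eta$). By the rightmost/downmost conventions, $\xi^{(r)}_{e_1}$ is the largest $k\ge 1$ at which $A(\cdot,r)$ attains $G^{(r)}_{m,n}$ (or $0$ if no horizontal exit does), with the analogous definition for $\xi^{(r)}_{e_2}$. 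The effect of passing from $r_1$ to $r_2$ is then captured by the identities $A(k,r_2)-A(k,r_1)=D_H(k)$ and $C(\ell,r_1)-C(\ell,r_2)=D_V(\ell)$.

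To prove $\xi^{(r_1)}_{e_1}\le\xi^{(r_2)}_{e_1}$ I will argue by contradiction: suppose $\xi^{(r_1)}_{e_1}>\xi^{(r_2)}_{e_1}$. In particular $\xi^{(r_1)}_{e_1}\ge 1$, and the rightmost convention at $r_2$ forces the strict inequality $A(\xi^{(r_1)}_{e_1},r_2)<G^{(r_2)}_{m,n}$. The optimum at $r_2$ is therefore attained either at some horizontal index $k^*<\xi^{(r_1)}_{e_1}$ or at some vertical index $\ell^*$. In the first case $A(k^*,r_2)>A(\xi^{(r_1)}_{e_1},r_2)$ rearranges, using the monotonicity of $D_H$, to
\[
A(k^*,r_1)-A(\xi^{(r_1)}_{e_1},r_1)>D_H(\xi^{(r_1)}_{e_1})-D_H(k^*)\ge 0,
\]
contradicting the $r_1$-optimality of $\xi^{(r_1)}_{e_1}$. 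In the vertical case $C(\ell^*,r_1)=C(\ell^*,r_2)+D_V(\ell^*)\ge C(\ell^*,r_2)>A(\xi^{(r_1)}_{e_1},r_2)\ge A(\xi^{(r_1)}_{e_1},r_1)$, contradicting the fact that at $r_1$ the rightmost maximal path exits horizontally.

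The dual inequality $\xi^{(r_1)}_{e_2}\ge\xi^{(r_2)}_{e_2}$ will follow by the mirror argument with the roles of $A$ and $C$ interchanged, now using the monotonicity of $D_V$ in $\ell$ and the downmost convention at $r_2$. The main delicate point is the careful bookkeeping of the rightmost/downmost conventions: since the environment is integer-valued the maximum in $k$ (or in $\ell$) can be attained at many indices, and the conventions are exactly what enforces the \emph{strict} inequality $A(\xi^{(r_1)}_{e_1},r_2)<G^{(r_2)}_{m,n}$ in the contradiction step. Without this strictness, ties would be allowed and could spoil the monotonicity of the exit point.
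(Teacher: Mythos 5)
Your argument is correct and is essentially the paper's own proof: both rest on the pointwise monotonicity of the coupled boundary weights under the common uniforms, a comparison of the boundary-sum differences accumulated between the two candidate exit points, and the right-most/down-most tie-breaking convention to rule out equalities (the paper phrases the contradiction as showing that the $r_1$-exit point would also attain the maximum in the $\om^{(r_2)}$ environment, contradicting extremality of $\xi^{(r_2)}$). Your sketched mirror case coincides with the case the paper actually displays: optimality at $r_1$ together with monotonicity of $D_V$ yields $C(\xi^{(r_1)}_{e_2},r_2)\ge G^{(r_2)}_{m,n}$, contradicting the down-most convention at $r_2$ — exactly the convention you invoke.
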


\begin{proof}
Assume that in environment $\om^{(r_1)}$ the maximal path exits from the vertical axis. Then, since $r_2 > r_1$ and the weights coupled through common uniforms, realization by realization $\om^{(r_2)}_{0,j} \le \om^{(r_1)}_{0,j}$. 
Assume by way of contradiction that 
$
\xi^{(r_1)}_{e_2} < \xi^{(r_2)}_{e_2}. 
$
Then
\begin{align*}
G_{(1, \xi^{(r_1)}_{e_2}),(m,n)} &\ge G_{(1, \xi^{(r_2)}_{e_2}),(m,n)} + \sS^{(r_1)}_{\xi^{(r_2)}_{e_2}}- \sS^{(r_1)}_{\xi^{(r_1)}_{e_2}}\\
&\ge G_{(1, \xi^{(r_2)}_{e_2}),(m,n)} + \sS^{(r_2)}_{\xi^{(r_2)}_{e_2}}- \sS^{(r_2)}_{\xi^{(r_1)}_{e_2}},
\end{align*}
giving
\[
G_{(0, \xi^{(r_1)}_{e_2}),(m,n)} + \sS^{(r_2)}_{\xi^{(r_1)}_{e_2}} \ge G_{(0, \xi^{(r_2)}_{e_2}),(m,n)} + \sS^{(r_2)}_{\xi^{(r_2)}_{e_2}} = G^{(r_2)}_{m,n},
\]
which cannot be true because $\xi^{(r_2)}_{e_2}$ is the down-most exit point in $\om^{(r_2)}$. The proof for a maximal path exiting the horizontal axis is similar.
\end{proof}
\subsection{Upper bound} 
In this section we prove the upper bound in Theorem  \eqref{thm:varub}. We begin with three comparison lemmas. One is for the two functions $A_{\cN^{(u)}}$ that appear in Proposition \ref{lem:exit} when we vary the parameter. The other comparison is between variances in environments with different parameters.  

\begin{lemma}\label{lem:covcomp}
Pick two parameters $0 < r_1 < r_2 < 1$. Then 
\be
\mathcal A_{\cN^{(r_1)}} \le \mathcal A_{\cN^{(r_2)}} + m(r_2 - r_1). 
\ee
\end{lemma}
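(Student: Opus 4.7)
The plan is to exploit the representation of $A_{\cN^{(u)}}$ from Proposition \ref{prop:exitbound2},
$$A_{\cN^{(u)}} = \lim_{\e \to 0^+} \frac{\E_{\P \otimes \mu_\e}(\cN^{u_\e}) - \E_{\P}(\cN^{(u)})}{\e} = \frac{\Cov(\cS^{(u)}, \cN^{(u)})}{u(1-u)},$$
and to compare the right-hand side at $u=r_1$ and $u=r_2$ by coupling the two environments and their auxiliary perturbations on a common probability space. Specifically, I would use the common-uniform construction to build $\om^{(r_1)}$ and $\om^{(r_2)}$ from the same $\{\eta_v\}$, so that $\om^{(r_1)}_{i,0}\le\om^{(r_2)}_{i,0}$ on the horizontal boundary, $\om^{(r_1)}_{0,j}\ge\om^{(r_2)}_{0,j}$ on the vertical boundary, and the bulk is shared; the perturbation variables $\mathscr H_i^{(\e)}$ from \eqref{eq:om-B} can be coupled consistently for both base parameters.

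The main computation would decompose the covariance as a sum of conditional-expectation gaps: since $\cS^{(u)}$ is independent of $G^{(u)}_{0,n}$,
$$\Cov(\cS^{(u)},\cN^{(u)}) = \Cov(\cS^{(u)},G^{(u)}_{m,n}) = u(1-u)\sum_{i=1}^{m}\Delta_i(u),$$
where $\Delta_i(u)=\E\bigl(G^{(u)}_{m,n}\mid\om_{i,0}=1\bigr)-\E\bigl(G^{(u)}_{m,n}\mid\om_{i,0}=0\bigr)\in[0,1]$, since flipping a single Bernoulli entry from $0$ to $1$ can only increase the last passage time, and by at most one. This gives $A_{\cN^{(u)}} = \sum_{i=1}^{m}\Delta_i(u)$ with $0\le \Delta_i(u)\le 1$. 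The comparison then reduces to showing that $\Delta_i(r_1)-\Delta_i(r_2)\le r_2-r_1$ on average over $i$, which should follow from a Harris/FKG-type coupling applied to each of the two conditional expectations simultaneously: increasing $u$ from $r_1$ to $r_2$ increases the typical value of $\cS$ and shifts the right-most exit point rightward (by the monotonicity of $\xi^{(u)}_{e_1}$ proved just before this lemma), and this rightward push bounds how much each $\Delta_i$ can decrease.

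The main obstacle is that going from $\om^{(r_1)}$ to $\om^{(r_2)}$ simultaneously perturbs \emph{both} boundaries in opposite monotone directions: the horizontal weights increase while the vertical weights decrease. A naive finite-difference argument using only the horizontal-boundary perturbation of Proposition \ref{prop:exitbound2} would produce an intermediate environment with horizontal parameter $r_2$ and vertical parameter $r_1$, which is not invariant, so Burke's property and the clean distributional identities for $\cN$ and $\cS$ break down. The right way to handle this is to keep the covariance expression in the $\Delta_i$-form throughout, use the monotone coupling to control the change in each $\Delta_i$ via the bound $\Delta_i\in[0,1]$ together with the monotonicity of exit points (the preceding lemma), and absorb the contribution of the vertical-boundary change into the slack term $m(r_2-r_1)$ by noting that each of the $m$ horizontal boundary sites can lose at most the total measure shift $r_2-r_1$ of the uniforms it depends on. Matching the constant $m(r_2-r_1)$ exactly is where the argument is most delicate.
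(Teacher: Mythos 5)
Your opening reduction is correct and attractive: since $\cW$ is independent of $\cS^{(u)}$, one indeed has $A_{\cN^{(u)}}=\Cov(\cS^{(u)},\cN^{(u)})/(u(1-u))=\sum_{i=1}^m\Delta_i(u)$ with $\Delta_i(u)=\E\big(G^{(u)}_{m,n}\mid\om_{i,0}=1\big)-\E\big(G^{(u)}_{m,n}\mid\om_{i,0}=0\big)\in[0,1]$. But the proof stops exactly where the lemma lives: the statement is equivalent to $\sum_{i=1}^m\big(\Delta_i(r_1)-\Delta_i(r_2)\big)\le m(r_2-r_1)$, and you never establish this; you assert it "should follow from a Harris/FKG-type coupling" and then concede that matching the constant is the delicate point. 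The justification you sketch for the absorption step is in fact unsound: $\Delta_i$ is a conditional expectation in which $\om_{i,0}$ is pinned, so it does not depend on $\eta_{i,0}$ at all; it depends on all the \emph{other} boundary uniforms (and on the vertical boundary, whose parameter moves in the opposite direction), and the total measure shift of those variables between the parameters $r_1$ and $r_2$ is of order $(m+n)(r_2-r_1)$, not $r_2-r_1$. A coupling/total-variation estimate along the lines you indicate therefore gives only $\Delta_i(r_1)-\Delta_i(r_2)\le C(m+n)(r_2-r_1)$ per site, i.e.\ an extra factor of $m$ after summation, and no mechanism is offered for why the pivotality probability of a single boundary site can drop by at most the single-site flip probability. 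Exit-point monotonicity, which you invoke, says nothing by itself about these pivotal probabilities, so there is a genuine gap.

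The paper's proof avoids per-site pivotality altogether. It uses the perturbative representation $A_{\cN^{(u)}}=\lim_{\e\to0}\e^{-1}\E_{\P\otimes\mu_\e}(\cN^{u_\e}-\cN^{(u)})$ from Proposition \ref{prop:exitbound2}, proves the pathwise bound $\cN^{u_\e}-\cN^{(u)}\ge\sS^{u_\e}_{\xi_{e_1}}-\sS^{(u)}_{\xi_{e_1}}$ (complemented by the upper bound up to $o(\e)$ in \eqref{eq:exn-n1}), and then couples the $r_1$ and $r_2$ environments by common uniforms $\eta$ with common perturbation variables $\mathscr H^{(\e)}_i$. Writing $\sS^{u_\e}_{\xi_{e_1}}-\sS^{(u)}_{\xi_{e_1}}=\sum_{i\le\xi^{(u)}_{e_1}}\mathbbm1\{\mathscr H^{(\e)}_i=1\}\mathbbm1\{\eta_{i,0}>u\}$, the monotonicity $\xi^{(r_1)}_{e_1}\le\xi^{(r_2)}_{e_1}$ lets one discard nonnegative extra terms, and the only loss comes from the indicator difference $\mathbbm1\{\eta_{i,0}>r_2\}-\mathbbm1\{\eta_{i,0}>r_1\}\ge-\mathbbm1\{r_1<\eta_{i,0}\le r_2\}$, whose expectation is $(r_2-r_1)$ times the $\mathscr H$-probability; summing over at most $m$ sites yields $-m\e(r_2-r_1)+o(\e)$, and dividing by $\e$ gives the lemma. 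So the constant $m(r_2-r_1)$ records the chance that a horizontal boundary weight flips between the two parameters along the stretch up to the exit point, not a bound on how much each $\Delta_i$ can move. If you want to keep a covariance-style formulation, the usable identity is the exit-point representation $A_{\cN^{(u)}}=\E\big(\sum_{i=1}^{\xi^{(u)}_{e_1}}(1-\om_{i,0})\big)$ (equation \eqref{eq:b-om} in the paper), from which the comparison follows in two lines by the same coupling and exit-point monotonicity; but that identity has to be proved first, and your $\Delta_i$ bound cannot substitute for it.
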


\begin{proof}[Proof of Lemma \ref{lem:covcomp}] Fix an $\e > 0$ small enough so that $r_1 + \e < r_2$ and $r_2 + \e < 1$. This is not a restriction as we will let $\e$ tend to 0 at the end of the proof.  
We use a common realization of the Bernoulli variables $\mathscr H^{(\e)}_i$ and we couple the weights in the $\om^{(r_2)}$ and $\om^{(r_1)}$ environments using common uniforms $\eta = \{\eta_{i,j}\}$ (with law $\P_\eta$), 
independent of the $\mathscr H^{(\e)}_i$. 

We need to bound a different way starting from the line before \eqref{eq:n-n}.
\begin{align*}
\cN^{u+\e} - \cN^{(u)}&= (\sS_\xi^{u+\e}-\sS^{(u)}_\xi)\cdot \mathbbm{1}\{\xi^{u+\e}= \xi \}+(\cN^{u+\e}-\cN)\cdot (\mathbbm{1}_{\mathcal A_1} + \mathbbm{1}_{\mathcal A_2} )\notag \\
&= (\sS^{u+\e}_{\xi_{e_1}}-\sS^{(u)}_{\xi_{e_1}})\cdot \mathbbm{1}\{\xi^{u+\e}_{e_1}= \xi_{e_1} \}+(\cN^{u+\e}-\cN)\cdot (\mathbbm{1}_{\mathcal A_1} + \mathbbm{1}_{\mathcal A_2} )\notag \\
&= (\sS^{u+\e}_{\xi_{e_1}}-\sS^{(u)}_{\xi_{e_1}})+(\cN^{u+\e}-\cN - (\sS^{u+\e}_{\xi_{e_1}}-\sS^{(u)}_{\xi_{e_1}}))\cdot (\mathbbm{1}_{\mathcal A_1} + \mathbbm{1}_{\mathcal A_2}).\label{eq:w1}
\end{align*}

We first show that the second term can never be negative. 
Write 
\begin{align*}
\cN^{u+\e}-\cN &- (\sS^{u+\e}_{\xi_{e_1}}-\sS^{(u)}_{\xi_{e_1}}) = G^{u+\e}_{m,n}-G^{(u)}_{m,n} - (\sS^{u+\e}_{\xi_{e_1}}-\sS^{(u)}_{\xi_{e_1}})\\
&=\sS^{u+\e}_{\xi^{u+\e}_{e_1}} + G_{(\xi^{u+\e}_{e_1},1) (m,n)}- G_{(\xi_{e_1},1) (m,n)} - \sS^{u+\e}_{\xi_{e_1}}.
\end{align*}
On $\mathcal A_2$ this expression is $0$. On $\mathcal A_1$ the sum of the first two terms is
strictly larger than the sum of the last two. Then, $\eqref{eq:w1}$ becomes 
\[ 
\cN^{u+\e} - \cN^{(u)} \ge \sS^{u+\e}_{\xi_{e_1}}-\sS^{(u)}_{\xi_{e_1}}.
\]
 
Use this to bound the first term in the computation that follows. The second term we bound with  \eqref{eq:exn-n}. 
\begin{align*}
E_{\mu_\e \otimes\P_\eta}&(\cN^{r_2+\e}- \cN^{(r_2)})-E_{\mu_\e \otimes\P_\eta}(\cN^{r_1+\e} - \cN^{(r_1)})\\
&\ge E_{\mu_\e \otimes\P_\eta}(\sS^{r_2+\e}_{\xi^{(r_2)}_{e_1}} - \sS^{(r_2)}_{\xi^{(r_2)}_{e_1}})-E_{\mu_\e \otimes\P_\eta}(\sS^{r_1+\e}_{\xi^{(r_1)}_{e_1}} - \sS^{(r_1)}_{\xi^{(r_1)}_{e_1}}) + o(\e)\\
&=E_{\mu_\e \otimes\P_\eta}\Big(\sum_{i=1}^{\xi^{(r_2)}_{e_1}}\mathbbm{1}\{ \mathscr H_i^{(\e)}=1\}\mathbbm{1}\{ \eta_{i,0}>r_2\}\Big)\\
&\phantom{xxxxxxxxx}-E_{\mu_\e \otimes\P_\eta}\Big(\sum_{i=1}^{\xi^{(r_1)}_{e_1}}\mathbbm{1}\{ \mathscr H_i^{(\e)}=1\}\mathbbm{1}\{ \eta_{i,0}>r_1\}\Big)+ o(\e)\\
&\ge E_{\mu_\e \otimes\P_\eta}\Big(\sum_{i=1}^{\xi^{(r_1)}_{e_1}}\mathbbm{1}\{ \mathscr H_i^{(\e)}=1\}\Big(\mathbbm{1}\{ \eta_{i,0}>r_2\} - \mathbbm{1}\{ \eta_{i,0}>r_1\}\Big)\Big)+ o(\e)\\
&\ge - m E_{\mu_\e \otimes\P_\eta}\Big(\mathbbm{1}\{ \mathscr H_i^{(\e)}=1\}\Big(\mathbbm{1}\{ \eta_{i,0}>r_2\} - \mathbbm{1}\{ \eta_{i,0}>r_1\}\Big)\Big)+ o(\e)\\
&=-m\e(r_2 - r_1) + o(\e).
\end{align*}
Divide by $\e$ and let $\e \to 0$ to get the result.
\end{proof}

\begin{lemma}\label{lem:varcomp}[Variance comparison]
Fix $\delta_0 >0$ and parameters $u$, $r$  so that $p < p +\delta_0< u < r <1$. Then, there exists a constant 
$C = C(\delta_0, p) > 0$ so that for all admissible values of $u$ and $r$ we have
\be
\frac{\Var(G_{m,n}^{(u)})}{u(1-u)} \le \frac{\Var(G_{m,n}^{(r)})}{r(1-r)}+ C(m+n)(r-u).
\ee
\end{lemma}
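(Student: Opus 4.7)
The strategy is to write both sides using the variance formula from Proposition \ref{lem:exit} and then apply the comparison in Lemma \ref{lem:covcomp} term-by-term.

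Using the first equality in \eqref{eq:var3} with parameter $u$ and dividing by $u(1-u)>0$ gives
\begin{equation*}
\frac{\Var(G_{m,n}^{(u)})}{u(1-u)} = \frac{np}{[u+p(1-u)]^2} - m + 2 A_{\cN^{(u)}},
\end{equation*}
and the analogous identity holds with $u$ replaced by $r$. Subtracting, the $-m$ terms cancel and we obtain
\begin{equation*}
\frac{\Var(G_{m,n}^{(u)})}{u(1-u)} - \frac{\Var(G_{m,n}^{(r)})}{r(1-r)} = np\!\left[\frac{1}{[u+p(1-u)]^2} - \frac{1}{[r+p(1-r)]^2}\right] + 2\!\left(A_{\cN^{(u)}} - A_{\cN^{(r)}}\right).
\end{equation*}
The plan is then to bound each of the two terms on the right by a constant times $(m+n)(r-u)$.

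For the second term, Lemma \ref{lem:covcomp} applied with $r_1=u$ and $r_2=r$ directly yields
$A_{\cN^{(u)}} - A_{\cN^{(r)}} \le m(r-u)$, which contributes $2m(r-u)$.

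For the first term, set $f(x)=p+(1-p)x$, so that $u+p(1-u)=f(u)$ and $r+p(1-r)=f(r)$. Since $f$ is increasing with $f'\equiv 1-p$ and since $f(u),f(r)\ge p>0$ (in fact $\ge p+(1-p)(p+\delta_0)$, giving uniform control in terms of $\delta_0$ and $p$), a short algebraic manipulation gives
\begin{equation*}
\frac{1}{f(u)^2} - \frac{1}{f(r)^2} = \frac{(f(r)-f(u))(f(r)+f(u))}{f(u)^2 f(r)^2} \le \frac{2(1-p)}{p^4}\,(r-u).
\end{equation*}
Hence the first bracket contributes at most $\frac{2p(1-p)}{p^4}\,n(r-u) = C'(p)\,n(r-u)$.

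Combining the two bounds yields
\begin{equation*}
\frac{\Var(G_{m,n}^{(u)})}{u(1-u)} - \frac{\Var(G_{m,n}^{(r)})}{r(1-r)} \le C'(p)\,n(r-u) + 2m(r-u) \le C(\delta_0,p)\,(m+n)(r-u),
\end{equation*}
which is the claim. There is no real obstacle here beyond checking that Lemma \ref{lem:covcomp} has the correct direction of inequality (it does, since increasing the boundary parameter from $u$ to $r$ only increases $A_\cN$ up to an error $m(r-u)$) and that the denominators $f(u),f(r)$ are bounded below uniformly in $u,r$, for which the condition $u>p+\delta_0$ (or more weakly $u\ge 0$) suffices.
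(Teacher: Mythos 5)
Your proposal is correct and follows essentially the same route as the paper: both start from the variance identity of Proposition \ref{lem:exit} divided by $u(1-u)$, control the difference $A_{\cN^{(u)}}-A_{\cN^{(r)}}$ by $m(r-u)$ via Lemma \ref{lem:covcomp}, and bound the difference of the terms $np/[\,\cdot\,+p(1-\cdot)]^2$ by a Lipschitz-type estimate of order $n(r-u)$. The only cosmetic difference is that you lower-bound the denominators by $p$ instead of $\delta_0$, which is equally valid.
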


\begin{proof} 
Begin from equation \eqref{eq:var3}, and bound
\allowdisplaybreaks
\begin{align*}
\frac{\Var(G_{m,n}^{(u)})}{u(1-u)}&= n\frac{p}{[u+p(1-u)]^2}-m+2A_{\cN^{(u)}}\\
&= n\frac{p}{[r+p(1-r)]^2}-m+2A_{\cN^{(u)}} + np(\frac{1}{[u+p(1-u)]^2} -\frac{1}{[r+p(1-r)]^2})\\
&\le \frac{\Var(G_{m,n}^{(r)})}{r(1-r) }+ np(\frac{1}{[u+p(1-u)]^2} -\frac{1}{[r+p(1-r)]^2})+ 2m(r-u)\\
&\le \frac{\Var(G_{m,n}^{(r)})}{r(1-r) }+ 2np(1-p)\frac{(r- u) }{[u+p(1-u)]^3} + 2m(r-u)\\
&\le \frac{\Var(G_{m,n}^{(r)})}{r(1-r) }+ 2n\frac{p(1-p)}{\delta_0^3}(r-u) + 2m(r-u).
\end{align*}
In the third line from the top we used Lemma \ref{lem:varcomp}. Set $C = 2\frac{p(1-p)}{\delta_0^3}\vee 2$ to finish the proof.
\end{proof}
From this point onwards we proceed by a perturbative argument. 
We introduce the scaling parameter $N$ that will eventually go to $\infty$ and the characteristic shape of the rectangle, given the boundary parameter. We will need to use the previous lemma, so we fix a $\delta_0 > 0$, so that $\delta_0< \lambda < 1$ and we choose a parameter $u = u(N,b,v) < \lambda$ so that 
\[ 
 \lambda -u = b\frac{v}{N}
\] 
At this point $v$ is free but $b$ is a constant so that $ \delta_0<  \lambda < u$ The north-east endpoint of the rectangle with boundary of parameter $\lambda$ is defined by $(m_\lambda(N), n_{\lambda}(N))$ which is the microscopic characteristic direction corresponding to $\lambda$ defined in \eqref{eq:bound}. 
%

The quantities $G_{(\xi_{e_2},1), (m,n)},\xi_{e_2}$ and $G_{m,n}$ connected to these indices are denoted by $G_{(\xi_{e_2},1), (m,n)}(N),\xi_{e_2}(N),G_{m,n}(N)$. In the proof we need to consider different boundary conditions and this will be indicated by a superscript. When the superscript $u$ will be used, the reader should remember that this signifies  changes on the boundary conditions and not the endpoint $(m_{\lambda}(N), n_{\lambda}(N))$, which will always be defined by \eqref{eq:bound} for a fixed $\lambda$. 

Since the weights $\{\om_{i,j}\}_{i,j\geq1}$ in the interior are not affected by changes in boundary conditions, the passage time $G_{(z,1), (m,n)}(N)$ will not either, for any $z < m_{\lambda}(N)$.

\begin{proposition} \label{yextibound}
Fix  $ \lambda \in (0,1)$. Then,  there exists a constant $K = K(\lambda, p)> 0$ so that for any $  b < K $, 
and $N$ sufficiently large
\be\label{eq:Z}
\P\{\xi_{e_2}^{(\lambda)}(N)>v\}\leq C \frac{N^2}{bv^3}\Big( \frac{\E(\xi^{(\lambda)}_{e_2})}{bv}+1\Big),
\ee
for all $v \ge 1$.
\end{proposition}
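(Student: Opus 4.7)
The plan is to adapt the perturbation strategy of Balázs-Cator-Seppäläinen. Fix $b>0$ (with $b<K$ to be chosen) and set $u=\lambda-bv/N$. By coupling the boundary weights through common uniforms on the vertical axis, the monotonicity lemma preceding the proposition gives $\xi^{(u)}_{e_2}\ge \xi^{(\lambda)}_{e_2}$ almost surely, so that $\P\{\xi^{(\lambda)}_{e_2}(N)>v\}\le \P\{\xi^{(u)}_{e_2}(N)>v\}$. Hence it suffices to bound $\P\{\xi^{(u)}_{e_2}(N)>v\}$, where the endpoint $(m_{\lambda}(N),n_{\lambda}(N))$ is now off-characteristic for the $u$-environment and is too tall by an amount proportional to $bv$.

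The first substantive step is an expectation lower bound of the form $\E(\xi^{(u)}_{e_2})\ge C_1 bv$. Starting from the second equation in \eqref{eq:var3}, combining it with $A_{\cE^{(u)}}\ge -\frac{p(1+u(1-p))}{(u+p(1-u))^2}\E(\xi^{(u)}_{e_2})$ from Lemma \ref{lem:Abound} and the non-negativity of $\Var(G^{(u)}_{m_\lambda(N),n_\lambda(N)})$, one rearranges to obtain
\[
\E(\xi^{(u)}_{e_2}) \ge \frac{1}{2(1+u(1-p))}\Big(n_\lambda(N)-N\frac{(p+(1-p)u)^2}{p}\Big).
\]
Expanding the squares and using $\lambda-u=bv/N$ turns the right-hand side into $C_1(\lambda,p)\cdot bv$ for a positive constant $C_1$ uniform in $b$ and $v$, provided $u$ remains bounded away from $0$; this is exactly the content of the hypothesis $b<K$.

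The last and most delicate step is upgrading this expectation lower bound into a tail bound with cubic decay. My plan is to write
\[
C_1 bv \le \E(\xi^{(u)}_{e_2}) = \E(\xi^{(u)}_{e_2}-\xi^{(\lambda)}_{e_2}) + \E(\xi^{(\lambda)}_{e_2}),
\]
with the difference $\xi^{(u)}_{e_2}-\xi^{(\lambda)}_{e_2}$ non-negative under the coupling. Splitting the expectation of this difference according to $\{\xi^{(\lambda)}_{e_2}>v\}$ versus its complement, and then applying Hölder on the favorable event, produces the probability $\P\{\xi^{(\lambda)}_{e_2}>v\}$ weighted by a higher moment of the exit point difference. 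The cubic factor $v^3$ arises from combining one power of $v$ from the tail level, one from a Chebyshev-type step, and one from the mismatch $N/b$ in the perturbation size; the two terms $\tfrac{N^2}{bv^3}$ and $\tfrac{N^2\E(\xi^{(\lambda)}_{e_2})}{b^2v^4}$ of the stated bound correspond to whether $\E(\xi^{(\lambda)}_{e_2})$ is negligible relative to $bv$ or comparable to it.

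The main obstacle is the combinatorial step that controls $\E[(\xi^{(u)}_{e_2}-\xi^{(\lambda)}_{e_2})\mathbbm{1}_{\{\xi^{(\lambda)}_{e_2}>v\}}]$: a single added success among the perturbed axis weights can shift the right-most exit point by substantially more than one, so bounding the difference by the number of perturbed sites (a binomial with mean $O(bv)$) is not sufficient. I expect to handle this via a Hölder inequality against a suitable influence function, trading off an $L^p$-moment of the exit point difference against the probability of the tail event, and then closing the loop by feeding the resulting moment back into the variance identity as in the proof of Lemma \ref{lem:covcomp}.
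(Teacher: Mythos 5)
Your setup is partially sound: the choice $u=\lambda-bv/N$, the coupling by common uniforms, and the expectation bound $\E(\xi^{(u)}_{e_2})\ge C_1bv$ obtained from the second line of \eqref{eq:var3}, Lemma \ref{lem:Abound} and $\Var(G^{(u)}_{m_\lambda(N),n_\lambda(N)})\ge 0$ are all correct. But this bound points in the wrong direction for the proposition: a \emph{lower} bound on $\E(\xi^{(u)}_{e_2})$ cannot control $\P\{\xi^{(u)}_{e_2}>v\}$ from above, and no upper bound on $\E(\xi^{(u)}_{e_2})$ or $\E(\xi^{(\lambda)}_{e_2})$ is available at this stage (the quantity $\E(\xi^{(\lambda)}_{e_2})$ must survive as an unknown on the right of \eqref{eq:Z}, to be bootstrapped only in the proof of Theorem \ref{thm:varub}). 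Moreover the first reduction $\P\{\xi^{(\lambda)}_{e_2}>v\}\le\P\{\xi^{(u)}_{e_2}>v\}$ discards exactly the two-parameter comparison that the estimate needs, leaving you with a tail bound for an off-characteristic exit point that is no easier than the original statement.

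The final step is where the argument genuinely fails. From $C_1bv\le\E(\xi^{(u)}_{e_2}-\xi^{(\lambda)}_{e_2})+\E(\xi^{(\lambda)}_{e_2})$, splitting on $\{\xi^{(\lambda)}_{e_2}>v\}$ and applying H\"older puts $\P\{\xi^{(\lambda)}_{e_2}>v\}$ on the large side of a lower bound; after rearranging this can at best yield a \emph{lower} bound on the tail probability, never the upper bound \eqref{eq:Z}, and you yourself flag the key term as an unresolved obstacle. Nothing in the sketch produces the factor $N^2$. The mechanism that works is different: on $\{\xi^{(\lambda)}_{e_2}>v\}$, maximality of the path exiting beyond $v$ together with the monotone coupling gives the pathwise inequality $\sS^{(\lambda)}_v-\sS^{(u)}_v+G^{(u)}_{{\bf t}_N(\lambda)}-G^{(\lambda)}_{{\bf t}_N(\lambda)}\ge 0$; one then splits this event at a deterministic threshold of order $bv^2/N$ --- the hypothesis $b<K$ is what makes the mean of $\sS^{(\lambda)}_v-\sS^{(u)}_v$, of order $bv^2/N$, dominate $\E[G^{(u)}-G^{(\lambda)}]\le Cb^2v^2/N$, not merely what keeps $u$ away from $0$ --- and applies Chebyshev to each piece. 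The i.i.d.\ boundary sums give a term $CbN/v^2$, while the passage-time difference gives $C\frac{N^2}{b^2v^4}\Var(G^{(u)}-G^{(\lambda)})$, which is controlled through Lemma \ref{lem:varcomp} and Proposition \ref{lem:exit} by $|A_{\cE^{(\lambda)}}|+CbvN\cdot N^{-1}$, and Lemma \ref{lem:Abound} converts $|A_{\cE^{(\lambda)}}|$ into $C\E(\xi^{(\lambda)}_{e_2})$; this is the source of both the $N^2$ and the $\E(\xi^{(\lambda)}_{e_2})$ in \eqref{eq:Z}. Without this comparison-plus-Chebyshev step (or a genuine substitute for it), your proposal does not close.
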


\begin{proof} We use an auxiliary parameter $u < \lambda$ so that 
\[
u = \lambda -  b v N^{-1} > 0.
\]
Constant $b$ is under our control.
We abbreviate  $(m_{\lambda}(N), n_{\lambda}(N))$ $= {\bf t}_N(\lambda)$. 
Whenever we use auxiliary parameters we explicitly mention it to alert the reader that the environments are coupled through common realizations of uniform random variables $\eta$. The measure that we are using for all computations is the background measure $\P_\eta$ but to keep the notation simple we omit the subscript $\eta$.
 
Since $G^{(u)}_{{\bf t}_N(\lambda)}(N)$ is utilised on the maximal path,
\[\sS^{(u)}_z+G_{(1,z),{\bf t}_N(\lambda)}(N)\leq G^{(u)}_{{\bf t}_N(\lambda)}(N)\]
for all $1\leq z\leq n_{\lambda}(N)$ and all parameters $p+\delta_0<u < \lambda <1$. Consequently, for integers $v\geq0$,
\begin{align}
\P\{\xi_{e_2}^{(\lambda)}(N)>v\}&=\P\{\exists \, z>v:\sS^{(\lambda)}_z+G_{(1,z),{\bf t}_N(\lambda)}(N)= G^{(\lambda)}_{{\bf t}_N(\lambda)}(N)\}\nonumber\\
&\leq\P\{\exists \,z>v:\sS^{(\lambda)}_z-\sS^{(u)}_z+G_{{\bf t}_N(\lambda)}^{(u)}(N)\geq G^{(\lambda)}_{{\bf t}_N(\lambda)}(N)\}\nonumber\\
&=\P\{\exists \, z>v:\sS^{(\lambda)}_z- \sS^{(u)}_z +G^{(u)}_{{\bf t}_N(\lambda)}(N)- G^{(\lambda)}_{{\bf t}_N(\lambda)}(N)\geq0\}\nonumber\\
&\leq\P\{\sS^{(\lambda)}_v-\sS^{(u)}_v+G^{(u)}_{{\bf t}_N(\lambda)}(N)- G^{(\lambda)}_{{\bf t}_N(\lambda)}(N)\geq0\}.\label{eq:prob}
\end{align}
The last line above follows from the fact that $u < \lambda$, which implies that $\sS^{(\lambda)}_k-\sS^{(u)}_k$ is non-positive and decreasing in $k$ when the weights are coupled through common uniforms. 
The remaining part of the proof goes into bounding the last probability above. For any $\alpha \in \R$ we further bound
\begin{align}
&\P\{\xi_{e_2}^{(\lambda)}(N)>v\}\leq\P\{\sS^{(\lambda)}_v-\sS^{(u)}_v\geq -\alpha\}\label{eq:prob1}\\
&\hspace{3cm}+\P\{G^{(u)}_{{\bf t}_N(\lambda)}(N)- G^{(\lambda)}_{{\bf t}_N(\lambda)}(N)\geq \alpha\}.\label{eq:prob2}
\end{align}
We treat \eqref{eq:prob1} and \eqref{eq:prob2} separately for  
\be
\alpha=-\E[\sS^{(\lambda)}_v-\sS^{(u)}_v]-C_0\frac{v^2}{N} 
\ee
where $C_0>0$. Restrictions on $C_0$ will be enforced in the course of the proof. 

\noindent \textbf{ Probability \eqref{eq:prob1}:} That is a sum of i.i.d.~random variables so we simply bound using Chebyshev's inequality.  The variance is estimated by 
\begin{align*}
\Var(\sS_v^{(\lambda)}-\sS^{(u)}_v)&= \sum_{j= 1}^v  \Var\big(\om^{(\lambda)}_{0,j} - \om^{(u)}_{0,j} \big) \le C_{p, \lambda} v(\lambda-u) = c_{p, \lambda}\frac{b v^2}{N}.
\end{align*}
Then by Chebyshev's inequality we obtain
\begin{align}
\P\Big\{\sS^{(\lambda)}_v-\sS^{(u)}_v\geq\E[\sS^{(\lambda)}_v-\sS^{(u)}_v]+C_0\frac{v^2}{N} \Big\}&\leq \frac{c_{\lambda, p}}{C_0^2}\cdot b\frac{N}{v^2}.
\end{align}

\noindent\textbf{ Probability \eqref{eq:prob2}: } 
Substitute in the value of $\alpha$ and subtract from both sides $\E[G^{(u)}_{{\bf t}_N(\lambda)}(N)- G^{(\lambda)}_{{\bf t}_N(\lambda)}(N)]$. Then
\begin{align}
\P&\{G^{(u)}_{{\bf t}_N(\lambda)}(N)- G^{(\lambda)}_{{\bf t}_N(\lambda)}(N)\geq \alpha\} \notag\\
&=\P\{G^{(u)}_{{\bf t}_N(\lambda)}(N)- G^{(\lambda)}_{{\bf t}_N(\lambda)}(N)-\E[G^{(u)}_{{\bf t}_N(\lambda)}(N)- G^{(\lambda)}_{{\bf t}_N(\lambda)}(N)]\notag\\
&\hspace{1.4cm}\geq v( \lambda - u)\frac{p}{(p + (1 -p)u)(p +(1- p)\lambda)}-C_0\frac{v^2}{N}-\E[G^{(u)}_{{\bf t}_N(\lambda)}(N)- G^{(\lambda)}_{{\bf t}_N(\lambda)}(N)]\}\notag\\
&\le\P\{G^{(u)}_{{\bf t}_N(\lambda)}(N)- G^{(\lambda)}_{{\bf t}_N(\lambda)}(N)-\E[G^{(u)}_{{\bf t}_N(\lambda)}(N)- G^{(\lambda)}_{{\bf t}_N(\lambda)}(N)]\notag\\
& \hspace{1.4cm}\ge  v( \lambda - u)C_{\lambda, p}-C_0\frac{v^2}{N}-\E[G^{(u)}_{{\bf t}_N(\lambda)}(N)- G^{(\lambda)}_{{\bf t}_N(\lambda)}(N)]\}\label{eq:xxxx}.
\end{align}
where
\[
C_{\lambda, p} = \frac{p}{(p+(1-p)\lambda)^2}.
\]
We then estimate
\allowdisplaybreaks 
\begin{align*}
\E[G^{(u)}_{{\bf t}_N(\lambda)}(N)&- G^{(\lambda)}_{{\bf t}_N(\lambda)}(N)] = m_{\lambda}(N)( u- \lambda) + n_{\lambda}(N)\Big( \frac{p(1-u)}{u + p(1-u)} - \frac{p(1-\lambda)}{\lambda + p(1-\lambda)}  \Big)\\
&=m_{\lambda}(N)(u - \lambda) - n_{\lambda}(N)\frac{p}{(p + (1 -p)u)(p +(1- p)\lambda)}( u - \lambda )\\
&\le N \frac{1-p}{p+(1-p)u}(\lambda - u)^2\\
&\le \frac{C_{u,p}}{N} b^2v^2.
\end{align*}
The first inequality above comes from removing the integer parts for $n_{\lambda}(N)$. The constant  $C_{u,p}$ is defined as 
\[
 C_{u,p} =  \frac{1-p}{p+(1-p)u}.
\] 
It is now straightforward to check that line \eqref{eq:xxxx} is non-negative when 
\[ b <    \frac{C_{\lambda,p}}{ 4 C_{u,p}} \,\, \text{ and }\,\,C_0 = b\frac{C_{\lambda, p}}{2}.\] 
With values of $b, C_0$
as are in the the display above, for any $c$ smaller than $b\, C_{\lambda, p} /4$, we have 
that 
\[
G^{(\lambda)}_{{\bf t}_N(\lambda)}(N)- G^{(u)}_{{\bf t}_N(\lambda)}(N) - \E[G^{(\lambda)}_{{\bf t}_N(\lambda)}(N)- G^{(u)}_{{\bf t}_N(\lambda)}(N)] \geq  c v^2 N^{-1} > 0.
\]
Using this,  we can apply Chebyshev's inequality one more time. 
In order, from Chebyshev's inequality, $(x+y)^2\leq2(x^2+y^2)$, Lemma \ref{lem:varcomp} and finally Proposition \ref{lem:exit} 
\begin{align*}
\text{Probability}&\eqref{eq:xxxx} \le \P\{| G^{(u)}_{{\bf t}_N(\lambda)}(N)- G^{(\lambda)}_{{\bf t}_N(\lambda)}(N) \\
&\phantom{xxxxxxxxxxxx}- \E[G^{(u)}_{{\bf t}_N(\lambda)}(N)- G^{(\lambda)}_{{\bf t}_N(\lambda)}(N)] |\geq  c v^2 N^{-1}\} \\
&\leq \frac{N^2}{c^2 v^4}\Var(G^{(u)}_{{\bf t}_N(\lambda)}(N)- G^{(\lambda)}_{{\bf t}_N(\lambda)}(N))\\
&\leq \frac{N^2}{c^2 v^4}\Big(\Var(G^{(u)}_{{\bf t}_N(\lambda)}(N))+\Var( G^{(\lambda)}_{{\bf t}_N(\lambda)}(N))\Big)\\
&\leq 4\frac{N^2}{c^2 v^4}\Big(\Var( G^{(\lambda)}_{{\bf t}_N(\lambda)}(N))+CN(\lambda-u)\Big)\\
&\leq 4\frac{N^2}{c^2v^4}\mathcal |A_{\cE^{(\lambda)}}|+Cb \frac{N^2}{c^2 v^3}.
\end{align*}
This together with the bound in Lemma \ref{lem:Abound} suffice for the conclusion of this proposition.
\end{proof}

\begin{proof}[Proof of Theorem \ref{thm:varub}, upper bound] 
We first bound the expected exit point for boundary with parameter $\lambda$. In what follows, $r$ is a parameter under our control, that will eventually go to $\infty$.
\begin{align*}
\E(\xi^{(\lambda)}_{e_2}(N)) &\leq rN^{2/3}+\sum_{v=rN^{2/3}}^{n_{\lambda}(N)}\P\{\xi^{(\lambda)}_{e_2}(N)>v\}\\ 
&\leq rN^{2/3} +\sum_{v=rN^{2/3}}^{\infty}C \frac{N^2}{v^3}\Big( \frac{\E(\xi^{(\lambda)}_{e_2})}{v}+1\Big)\quad\text{by}\eqref{eq:Z}\\
&\leq rN^{2/3}+\frac{C\E(\xi^{(\lambda)}_{e_2})}{r^3}+\frac{C}{r^2}N^{2/3}.
\end{align*}
Let $r$ sufficiently large so that $C/ r^3 < 1$. Then, after rearranging  the terms in the inequality above, we conclude 
\begin{align*}
\E(\xi^{(\lambda)}_{e_2}(N)) &\leq  C N^{2/3}. 
\end{align*} The variance bound follows from this, Lemma \ref{lem:Abound} and equation \eqref{eq:var3} when $m,n$ satisfy \eqref{eq:bound}.  
\end{proof}

An immediate corollary of this is the following bound in probability that is obtained directly from expression \eqref{eq:Z} is
\begin{corollary}\label{cor:CC}
Fix  $ \lambda \in (0,1)$. Then,  there exists a constant $K = K(\lambda, p)> 0$ so that for any $  r >  0 $, 
and $N$ sufficiently large
\be\label{eq:Z2}
\P\{\xi_{e_2}^{(\lambda)}(N)> r N^{2/3}\}\leq \frac{K}{r^3}.
\ee
\end{corollary}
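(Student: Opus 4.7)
The plan is to substitute $v = r N^{2/3}$ directly into the bound \eqref{eq:Z} furnished by Proposition \ref{yextibound}. First I would fix a constant $b = b(\lambda, p)$ with $b < K(\lambda, p)$, as required by the hypotheses of that proposition (for concreteness $b = K(\lambda,p)/2$), so that $b$ becomes an absolute constant depending only on $\lambda$ and $p$. Then whenever $r N^{2/3} \geq 1$, applying \eqref{eq:Z} with this choice of $v$ gives
\[
\P\{\xi_{e_2}^{(\lambda)}(N) > r N^{2/3}\} \le \frac{C}{b r^{3}} \left( \frac{\E(\xi^{(\lambda)}_{e_2}(N))}{b r N^{2/3}} + 1 \right).
\]

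The next step is to import the first moment estimate $\E(\xi^{(\lambda)}_{e_2}(N)) \leq C_0 N^{2/3}$ that was just produced inside the proof of the upper bound of Theorem \ref{thm:varub}. Plugging this into the display above gives, for every $r \geq 1$ and all $N$ large enough,
\[
\P\{\xi_{e_2}^{(\lambda)}(N) > r N^{2/3}\} \le \frac{C}{b r^{3}} \left( \frac{C_0}{b r} + 1 \right) \leq \frac{K_1}{r^3},
\]
where $K_1 = K_1(\lambda, p)$ is obtained by absorbing all prefactors into a single constant.

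For the remaining regime $0 < r < 1$ no work is needed: one has the trivial estimate $\P\{\xi_{e_2}^{(\lambda)}(N) > r N^{2/3}\} \leq 1 \leq 1/r^3$. Setting $K = \max(K_1, 1)$ therefore handles both ranges uniformly and yields \eqref{eq:Z2}. I do not expect any genuine obstacle here, since the statement is essentially a tail rewriting of Proposition \ref{yextibound} powered by the $O(N^{2/3})$ first moment bound; the only mild bookkeeping is to verify that the restriction $v \geq 1$ in \eqref{eq:Z} and the lower bound on $r$ used to absorb the $C_0/(br)$ term can be merged with the trivial small-$r$ estimate through the choice of $K$.
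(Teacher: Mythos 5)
Your proposal is correct and follows essentially the same route as the paper: the corollary is stated there as an immediate consequence of \eqref{eq:Z} combined with the first moment bound $\E(\xi^{(\lambda)}_{e_2}(N))\le CN^{2/3}$ obtained at the end of the proof of the upper bound of Theorem \ref{thm:varub}, which is precisely your substitution $v=rN^{2/3}$ with a fixed admissible $b=b(\lambda,p)$. Your handling of the small-$r$ regime by the trivial bound and the merging of constants into $K$ is the only bookkeeping needed, and there is no circularity since the moment bound is derived from Proposition \ref{yextibound} alone.
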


\section{Lower bound for the variance in characteristic directions}
\label{sec:varlb}

\subsection{Down-most maximal path and Competition interface} 
In this section first we want to construct the down-most maximal path and a possible competition interface. Then we identify their properties and relations which will be crucial to find the lower bound for the order of fluctuations of the maximal path.

\subsubsection{The down-most maximal path}
Consider a triple $(I_{i,j},J_{i,j},\omega_{i,j})$ defined in \eqref{eq:bnddist}, and keep in mind the increment definition \eqref{eq:gradients}. Recall that the maximal path in the interior process collects weights only with a diagonal step with probability given by $\omega$. We define the down-most maximal path $\hat{\pi}$ starting from the target point $(m,n)$ and going backward following the rules 

\be\label{maxpathhatpi}
\hat{\pi}_{k+1}=
\begin{cases}
	\hat{\pi}_{k}+(0,1)& \text{if } G(\hat{\pi}_k+(0,1))=G(\hat{\pi}_k),\\
	\hat{\pi}_{k}+(1,0) & \textrm{if } G(\hat{\pi}_k+(1,-1))<G(\hat{\pi}_k+(0,1))\text{ and }\omega_{\hat{\pi}_k+(1,0)}=0,\\
	\hat{\pi}_{k}+(1,1) & \text{if }G(\hat{\pi}_k)=G(\hat{\pi}_k+(1,0))\text{ and }\omega_{\hat{\pi}_k+(1,1)}=1.
\end{cases}
\ee
The moment that $\hat{\pi}$ hits one of the two axes (or the origin) it starts to collect from the axis, which it has hit, down to the origin.\\
The maximal path $\hat{\pi}$ can be formalized in the following way.

The graphical representation is in \ref{fig:whole}.

\begin{figure}%
\begin{subfigure}[t]{0.33\linewidth}
\centering
\begin{tikzpicture}[>=latex]
\draw (0,0)node[xshift=0em,yshift=-0.7em]{\tiny$(i-1,j-1)$} -- (2.5,0)node[xshift=0.5em,yshift=-0.7em]{\tiny$(i,j-1)$} -- (2.5,2.5)node[xshift=0em,yshift=0.7em]{\tiny$(i,j)$} -- (0,2.5)node[xshift=-0.5em,yshift=0.7em]{\tiny$(i-1,j)$} -- (0,0);
\draw[->,red](2.3,2.3)node[xshift=-1em,yshift=-2.7em,black]{$\hat{\pi}$}--(2.3,0.1);
\draw (2.5,2.5)node[xshift=1.9em,yshift=-0.3em,black]{\tiny$\omega_{i,j}=0,1$};
\draw (2.5,2)node[xshift=1.5em,yshift=-2em,black]{\tiny$J_{i,j}=0$};
\draw (2,2.5)node[xshift=-2em,yshift=0.5em,black]{\tiny$I_{i,j}=0,1$};
\end{tikzpicture}
\end{subfigure}%
\begin{subfigure}[t]{0.33\linewidth}
\centering
\begin{tikzpicture}[>=latex]
\draw (0,0)node[xshift=0em,yshift=-0.7em]{\tiny$(i-1,j-1)$} -- (2.5,0)node[xshift=0.5em,yshift=-0.7em]{\tiny$(i,j-1)$} -- (2.5,2.5)node[xshift=0em,yshift=0.7em]{\tiny$(i,j)$} -- (0,2.5)node[xshift=-0.5em,yshift=0.7em]{\tiny$(i-1,j)$} -- (0,0);
\draw[->,red](2.3,2.3)node[xshift=-3em,yshift=-2em,black]{$\hat{\pi}$}--(0.2,0.2);
\draw (2.5,2.5)node[xshift=1.9em,yshift=-0.3em,black]{\tiny$\omega_{i,j}=1$};
\draw (2.5,2)node[xshift=1.5em,yshift=-2em,black]{\tiny$J_{i,j}=1$};
\draw (2,2.5)node[xshift=-2em,yshift=0.5em,black]{\tiny$I_{i,j}=0,1$};
\end{tikzpicture}
\end{subfigure}%
\begin{subfigure}[t]{0.33\linewidth}
\centering
\begin{tikzpicture}[>=latex]
\draw (0,0)node[xshift=0em,yshift=-0.7em]{\tiny$(i-1,j-1)$} -- (2.5,0)node[xshift=0.5em,yshift=-0.7em]{\tiny$(i,j-1)$} -- (2.5,2.5)node[xshift=0em,yshift=0.7em]{\tiny$(i,j)$} -- (0,2.5)node[xshift=-0.5em,yshift=0.7em]{\tiny$(i-1,j)$} -- (0,0);
\draw[->,red](2.3,2.3)node[xshift=-2.5em,yshift=-0.7em,black]{$\hat{\pi}$}--(0.1,2.3);
\draw (2.5,2.5)node[xshift=1.9em,yshift=-0.3em,black]{\tiny$\omega_{i,j}=0$};
\draw (2.5,2)node[xshift=1.5em,yshift=-2em,black]{\tiny$J_{i,j}=1$};
\draw (2,2.5)node[xshift=-2em,yshift=0.5em,black]{\tiny$I_{i,j}=0$};
\end{tikzpicture}%
\end{subfigure}%

\subcaptionbox{Combination of $I,J$ and\\ $\omega$ for a down ($-e_2$) step. \label{fig:foo}}[0.33\linewidth]{}%
\subcaptionbox{Combination of $I,J$ and\\ $\omega$ for a diagonal step. \label{fig:bar}}[0.33\linewidth]{}%
\subcaptionbox{Combination of $I,J$ and\\ $\omega$ for a left ($-e_1$) step.\label{fig:baz}}[0.33\linewidth]{}%
\caption{One-step backward construction for the down-most maximal path $\hat{\pi}$.  \label{fig:whole}}
\end{figure}
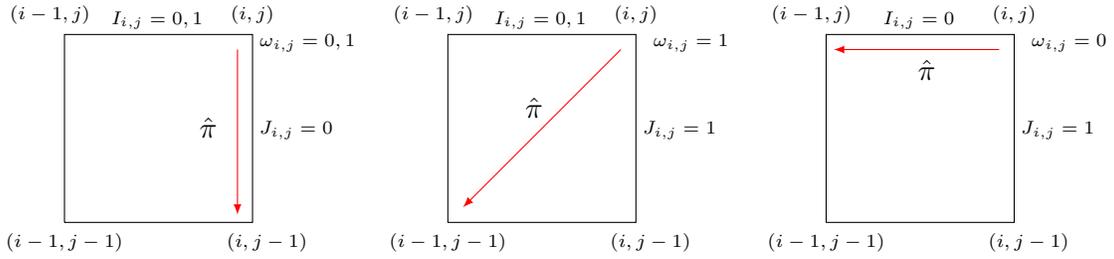

\subsubsection{The competition interface}
The competition interface is an infinite path $\varphi$ which takes only the same admissible steps as the paths we optimise over. $\varphi = \{ \varphi_0 = (0,0), \varphi_1, \ldots \}$ is completely determined by the values of $I, J$ and $\omega$. In particular, for any $k \in \N$,
\be\label{compint}
\varphi_{k+1}=
\begin{cases}
	\varphi_{k}+(0,1) & \text{if } 
		\begin{aligned} 
			&G(\varphi_k+(0,1))<G(\varphi_k+(1,0))\text{ or}\\
			&G(\varphi_k+(0,1))=G(\varphi_k+(1,0)) \text{ and }G(\varphi_k+(0,1))=G(\varphi_k),
		\end{aligned}\\ 
	\varphi_{k}+(1,0) & \textrm{if } G(\varphi_k+(1,0))<G(\varphi_k+(0,1)),\\
	\varphi_{k}+(1,1) & \text{if }G(\varphi_k+(0,1))=G(\varphi_k+(1,0)) \text{ and }G(\varphi_k+(0,1))>G(\varphi_k).
\end{cases}
\ee
In words,  the path $\varphi$ always chooses its step according to the smallest of the possible $G$-values. If they are equal, the competition interface decides to go up if the last passage time of the up and right point are equal and they are also equal to the last passage time of the starting point otherwise it takes a diagonal step. 

\begin{remark}
In literature the name \textit{competition interface} comes from the fact that it represents the threshold interface between the points which will be reached by the maximal path whose first step is right or up. Since our model is discrete, and we have three (rather than two) possible steps and our maximal path is not unique, our definition of $\varphi$ depends on our choice of maximal path; here
we chose the down-most path as our maximal path and then we accordingly defined the competition interface, so that we exploit certain good duality properties in the sequence. \qed
\end{remark} 

This being said, the partition of the plain into the two competing clusters is useful in some parts of the proofs that follow, so we would like to develop it in this setting. 
Define 
\begin{align*}
\mathcal C_{\uparrow, \nearrow} &= \{ v = (v_1, v_2) \in \Z^2_+: \text{ there exists a maximal path from $0$ to $v$} \\
&\phantom{xxxxxxxxxxxxxxxxxxxxxxxxxxxxxxxxxxxxxxxxx} \text{with first step $e_1$ or $e_1 + e_2$}\}.
\end{align*}
The remaining sites of $\Z_+^2$ are sites for which all possible maximal paths to them \emph{have to} take a horizontal first step. we denote that cluster by 
$\mathcal C_{\rightarrow} = \bZ^2_+ \setminus \mathcal C_{\uparrow, \nearrow}$. 

Some immediate observations follow. First note that the vertical axis $\{ (0, v_2) \}_{v_2 \in \N} \in \mathcal C_{\uparrow, \nearrow}$  while  $\{ (v_1, 0) \}_{v_1 \in \N} \in \mathcal C_{\rightarrow}$. We include $(0,0)\in \mathcal C_{\uparrow, \nearrow} $ in a vacuous way. 

Then observe that if 
$(v_1, v_2) \in \mathcal C_{\uparrow, \nearrow}$ then it has to be that $(v_1, y) \in \mathcal C_{\uparrow, \nearrow}$ for all $y \ge v_2$. This is a consequence of planarity. Assume that for some $y > v_2$ the maximal path $\pi _{0, (v_1, y)}$ has to take a horizontal first step. Then it will intersect with the maximal path $\pi_{0, (v_1, v_2)}$ to $(v_1, v_2)$ with a non-horizontal first step. At the point of intersection $z$, the two passage times are the same, so in fact there exists a maximal path to  $(v_1, y)$ with a non-horizontal first step: it is the concatenation of the $\pi_{0, (v_1, v_2)}$ up to site $z$ and from $z$ onwards we follow $\pi _{0, (v_1, y)}$. 

Finally, note that if $v \neq 0$ and $v \in  \mathcal C_{\uparrow, \nearrow}$ and $v + e_1 \in \mathcal C_{\rightarrow}$, it must be the case that 
\[
I _{v + e_1} = G_{0, v+e_1} - G_{0, v} = 1.
\]
Assume the contrary. Then, if the two passage times are the same, a potential maximal path to $v +e_1$ is the one that goes to $v$ without a horizontal initial step, and after $v$ it takes an $e_1$ step. This would also imply that   $v +e_1 \in  \mathcal C_{\uparrow, \nearrow}$ which is a contradiction.

These observations allow us to define a boundary between the two clusters as a piecewise linear curve $\tilde \varphi = \{ 0= \tilde \varphi_0,  \tilde \varphi_1, \ldots\}  $ which takes one of the three admissible steps, $e_1, e_2, e_1+e_2$. 
We first describe the first step of this curve when all of the $\{\om, I, J\}$ are known. (see \ref{fig:whole2}).
\be \label{eq:clusterboundary}
\tilde \varphi_1 = 
\begin{cases}
(1,0), & \text{ when } (\om_{1,1}, I_{1,0}, J_{0,1}) \in \{ (1, 0, 1), (0,0,1)\},\\
(1,1), & \text{ when } (\om_{1,1}, I_{1,0}, J_{0,1}) \in \{ (1, 0, 0), (0,0,0), (1,1,0), (1,1,1), (0,1,1) \},\\
(0,1), & \text{ when }  (\om_{1,1}, I_{1,0}, J_{0,1}) \in \{ (0,1,0)\}.
\end{cases}
\ee
\begin{figure}%
\begin{subfigure}[t]{0.33\linewidth}
\centering
\begin{tikzpicture}[>=latex]
\draw (0,0)node[xshift=0em,yshift=-0.7em]{\tiny$(0,0)$} -- (2.5,0)node[xshift=0.5em,yshift=-0.7em]{\tiny$(1,0)$} -- (2.5,2.5)node[xshift=0em,yshift=0.7em]{\tiny$(1,1)$} -- (0,2.5)node[xshift=-0.5em,yshift=0.7em]{\tiny$(1,0)$} -- (0,0);
\draw[->,red](0.2,0.2)--(2.3,2.3)node[xshift=-3.3em,yshift=-2em,black]{$\tilde{\varphi}_1$};
\draw (2.5,2.5)node[xshift=1.9em,yshift=-0.3em,black]{\tiny$\omega_{1,1}=0,1$};
\draw (1.3,-0.3)node{\tiny$I_{1,0}=0$};
\draw (-0.6,1.3)node{\tiny$J_{0,1}=0$};
\end{tikzpicture}
\end{subfigure}%
\begin{subfigure}[t]{0.33\linewidth}
\centering
\begin{tikzpicture}[>=latex]
\draw (0,0)node[xshift=0em,yshift=-0.7em]{\tiny$(0,0)$} -- (2.5,0)node[xshift=0.5em,yshift=-0.7em]{\tiny$(1,0)$} -- (2.5,2.5)node[xshift=0em,yshift=0.7em]{\tiny$(1,1)$} -- (0,2.5)node[xshift=-0.5em,yshift=0.7em]{\tiny$(1,0)$} -- (0,0);
\draw[->,red](0.2,0.2)--(2.3,2.3)node[xshift=-3.3em,yshift=-2em,black]{$\tilde{\varphi}_1$};
\draw (2.5,2.5)node[xshift=1.6em,yshift=-0.3em,black]{\tiny$\omega_{1,1}=1$};
\draw (1.3,-0.3)node{\tiny$I_{1,0}=1$};
\draw (-0.7,1.3)node{\tiny$J_{0,1}=0,1$};
\end{tikzpicture}
\end{subfigure}%
\begin{subfigure}[t]{0.33\linewidth}
\centering
\begin{tikzpicture}[>=latex]
\draw (0,0)node[xshift=0em,yshift=-0.7em]{\tiny$(0,0)$} -- (2.5,0)node[xshift=0.5em,yshift=-0.7em]{\tiny$(1,0)$} -- (2.5,2.5)node[xshift=0em,yshift=0.7em]{\tiny$(1,1)$} -- (0,2.5)node[xshift=-0.5em,yshift=0.7em]{\tiny$(1,0)$} -- (0,0);
\draw[->,red](0.2,0.2)--(2.3,2.3)node[xshift=-3.3em,yshift=-2em,black]{$\tilde{\varphi}_1$};
\draw (2.5,2.5)node[xshift=1.6em,yshift=-0.3em,black]{\tiny$\omega_{1,1}=0$};
\draw (1.3,-0.3)node{\tiny$I_{1,0}=1$};
\draw (-0.6,1.3)node{\tiny$J_{0,1}=1$};
\end{tikzpicture}%
\end{subfigure}
\subcaptionbox{Combination of $I,J$ and\\ $\omega$ for a diagonal step. \label{fig:foo}}[0.33\linewidth]{}%
\subcaptionbox{Combination of $I,J$ and\\ $\omega$ for a diagonal step. \label{fig:bar}}[0.33\linewidth]{}%
\subcaptionbox{Combination of $I,J$ and\\ $\omega$ for a diagonal step.\label{fig:baz}}[0.33\linewidth]{}%

\begin{subfigure}[t]{0.4\textwidth}
\centering
\begin{tikzpicture}[>=latex]
\draw (0,0)node[xshift=0em,yshift=-0.7em]{\tiny$(0,0)$} -- (2.5,0)node[xshift=0.5em,yshift=-0.7em]{\tiny$(1,0)$} -- (2.5,2.5)node[xshift=0em,yshift=0.7em]{\tiny$(1,1)$} -- (0,2.5)node[xshift=-0.5em,yshift=0.7em]{\tiny$(1,0)$} -- (0,0);
\draw[->,red](0.15,0.15)node[yshift=3em,xshift=0.9em,black]{$\tilde{\varphi}_1$}--(0.15,2.35);
\draw (2.5,2.5)node[xshift=1.9em,yshift=-0.3em,black]{\tiny$\omega_{1,1}=0,1$};
\draw (1.3,-0.3)node{\tiny$I_{1,0}=1$};
\draw (-0.6,1.3)node{\tiny$J_{0,1}=0$};
\end{tikzpicture}
\end{subfigure}%
\begin{subfigure}[t]{0.4\textwidth}
\centering
\begin{tikzpicture}[>=latex]
\draw (0,0)node[xshift=0em,yshift=-0.7em]{\tiny$(0,0)$} -- (2.5,0)node[xshift=0.5em,yshift=-0.7em]{\tiny$(1,0)$} -- (2.5,2.5)node[xshift=0em,yshift=0.7em]{\tiny$(1,1)$} -- (0,2.5)node[xshift=-0.5em,yshift=0.7em]{\tiny$(1,0)$} -- (0,0);
\draw[->,red](0.15,0.15)node[xshift=3em,yshift=0.7em,black]{$\tilde{\varphi}_1$}--(2.35,0.15);
\draw (2.5,2.5)node[xshift=1.6em,yshift=-0.3em,black]{\tiny$\omega_{1,1}=0$};
\draw (1.3,-0.3)node{\tiny$I_{1,0}=0$};
\draw (-0.6,1.3)node{\tiny$J_{0,1}=1$};
\end{tikzpicture}%
\end{subfigure}
\subcaptionbox{Combination of $I,J$ and\\ $\omega$ for an up step. \label{fig:bar}}[0.4\linewidth]{}%
\subcaptionbox{Combination of $I,J$ and\\ $\omega$ for a right step.\label{fig:baz}}[0.4\linewidth]{}%
\caption{Constructive admissible steps for $\tilde{\varphi}_1$.  \label{fig:whole2}}
\end{figure}

From this definition we see that $\tilde\varphi_1$ stays on the $x$-axis only when $I_{1,0} = 0$ and $J_{0,1} = 1$. If that is the case, repeat the steps in \eqref{eq:clusterboundary} until $\tilde\varphi$ increases its $y$-coordinate and changes level. Any time $\tilde\varphi$ changes level from $\ell-1$ to $\ell $,
 it takes horizontal steps (the number of steps could be 0) until a site $(v_\ell, \ell)$ where $(v_{\ell}, \ell) \in \mathcal C_{\uparrow, \nearrow}$ but $(v_{\ell}+1, \ell) \in \mathcal C_{\rightarrow}$. In that case, $I_{v_{\ell}+1, \ell} = 1$, by the second and third observations above, and $\tilde\varphi$ will change level, again following the steps in \eqref{eq:clusterboundary}.

From the description of the evolution of $\tilde \varphi$, starting from \eqref{eq:clusterboundary} and evolving as we describe in the previous paragraph, the definition of the competition interface $\varphi$ in \eqref{compint}, implies as piecewise linear curves, 
\be \label{eq:phiordering}
\varphi \ge \tilde \varphi,
\ee
i.e.\ if $(x, y_1) \in \varphi$ and $(x, y_2) \in \tilde \varphi$ then, $y_1 \ge y_2$. Similarly, if $(x_1, y) \in \varphi$ and $(x_2, y) \in \tilde \varphi$ then, $x_1 \le x_2$.
Moreover, if $u\in\Z^2_+\not\in\tilde\varphi$ then it has to belong to one of the clusters; $\mathcal C_{\rightarrow}$ if $u$ is below $\tilde\varphi$ and $\mathcal C_{\uparrow, \nearrow}$ otherwise. (see \ref{fig:comphi}).

\begin{figure}[h]
\centering
\begin{tikzpicture}[scale=0.7, >=latex]
\draw[fill=sussexp!30,sussexp!30] (0,0)--(1,1)--(1,2)--(2,3)--(3,3)--(3,4)--(4,5)--(6,5)--(7,6)--(7,7)--(9,7)--(9,9)--(9,0)--(0,0);
\draw[fill=sussexg!30,sussexg!30] (0,0)--(1,1)--(1,2)--(2,3)--(3,3)--(3,4)--(4,5)--(6,5)--(7,6)--(7,7)--(9,7)--(9,9)--(0,9)--(0,0);
\draw[->] (-0.2,0) -- (9.5,0) ;
  \draw[->] (0,-0.2) -- (0,9.5) ;
\draw[blue, line width=2pt] (0,0)--(1,1)--(1,2)--(2,3)--(3,3)--(3,4)--(4,5)--(6,5)--(7,6)--(7,7)--(9,7)--(9,9)node[xshift=1.5em,black]{\tiny$(m,n)$};
\draw[nicos-red, line width = 2pt] (0,0)--(0,1)--(0,2)--(1,3)--(1,4)--(3,4)--(4,5)--(6,5)--(6,8)--(8,8)--(9,9);
\matrix [draw] at (12,8.5) {
\draw [blue, line width=2pt] (0,0)--(0.3,0)node[xshift=0.7em,black]{\tiny$\tilde{\varphi}$}; \\
\draw [nicos-red, line width=2pt] (0,0)--(0.3,0)node[xshift=0.7em,black]{\tiny$\varphi$};\\
\draw [fill=sussexp!30,sussexp!30] (0,0) rectangle (0.3,0.1)node[yshift=-0.2em, xshift=1em,black]{\tiny$\mathcal C_{\rightarrow}$};\\
\draw [fill=sussexg!30,sussexg!30]  (0,0) rectangle (0.3,0.1)node[yshift=-0.2em, xshift=1.3em,black]{\tiny$\mathcal C_{\uparrow, \nearrow}$};\\
};
\end{tikzpicture}
\caption{Graphical representation of $\tilde{\varphi}$ and $\varphi$. Both curves can be thought as competition interfaces. $\tilde{\varphi}$ separates competing clusters, depending on the first step of the right-most maximal path, while $\varphi$ follows the smallest increment of passage times with a rule to break ties. As curves they are geometrically ordered, $\tilde \varphi \le \varphi$.}
\label{fig:comphi}
\end{figure}
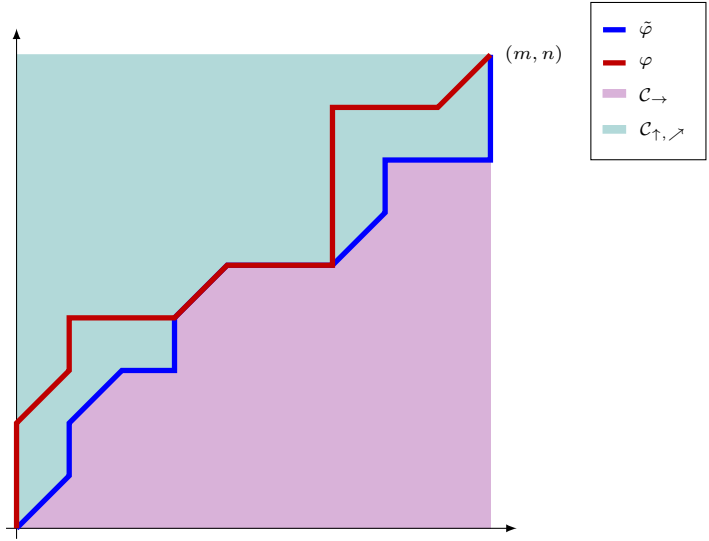
\subsubsection{The reversed process}
Let $(m,n)$ with $m,n>0$ be the target point. Define
\be\label{revproc}
G^*_{i,j} = G_{m,n} - G_{m-i, n -j},\qquad\text{for }0\leq i<m\text{ and }0\leq i<n.
\ee

It represents the time to reach point $(i,j)$ starting from $(m,n)$ for the reversed process.
We also define the new edge and the bulk weights by 
\begin{align}
I^*_{i,j} &= I_{m-i+1, n-j}, \quad \text{ when } i \ge 1, j \ge 0\label{eq:istar} \\
J^*_{i,j} &= J_{m-i, n-j +1}, \quad \text{ when } i \ge 0, j \ge 1\\
\omega^*_{i,j} &= \alpha_{m-i, n-j}, \quad \text{ when } i \ge 1, j \ge 1.\label{intrev}
\end{align}
Then we have the reverse identities. 
\begin{lemma}\label{reversal}
Let $I^*$ and $J^*$ be respectively the horizontal and vertical increment for the reversed process. Then, for $0\leq i<m\text{ and }0\leq i<n$, we have
\begin{align}
I^*_{i,j} &= \omega^*_{i,j} \vee I^*_{i,j-1} \vee J^*_{i-1,j} - J^*_{i-1,j} = G^*_{i,j} - G^*_{i-1, j} \\
J^*_{i,j} &= \omega^*_{i,j} \vee I^*_{i,j-1} \vee J^*_{i-1,j} - I^*_{i,j-1} = G^*_{i,j} - G^*_{i, j-1}.
\end{align}
\end{lemma}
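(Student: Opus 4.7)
The plan is to reduce each claim to a short pathwise identity between the forward variables and then verify that identity by splitting on its value.

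First I would dispatch the second equalities in both claims, which are immediate telescopings of \eqref{revproc} and \eqref{eq:istar}--\eqref{intrev}: for instance,
\[
G^*_{i,j} - G^*_{i-1,j} = G_{m-i+1,\, n-j} - G_{m-i,\, n-j} = I_{m-i+1,\, n-j} = I^*_{i,j},
\]
and symmetrically $G^*_{i,j} - G^*_{i, j-1} = J_{m-i,\, n-j+1} = J^*_{i,j}$.

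Next I would rewrite the first equality for $I^*$ in forward coordinates. Setting $k = m-i+1$ and $\ell = n-j+1$ and unpacking the definitions, the identity
\[
I^*_{i,j} = \omega^*_{i,j} \vee I^*_{i,j-1} \vee J^*_{i-1,j} - J^*_{i-1,j}
\]
becomes
\[
I_{k,\ell-1} + J_{k,\ell} = \alpha_{k-1,\ell-1} \vee I_{k,\ell} \vee J_{k,\ell}.
\]
Applying the forward recursion \eqref{eq:4} for $J_{k,\ell}$, the left-hand side equals $\max\{\omega_{k,\ell}, I_{k,\ell-1}, J_{k-1,\ell}\}$, so the whole claim reduces to the pathwise identity
\[
\max\{\omega_{k,\ell}, I_{k,\ell-1}, J_{k-1,\ell}\} = \max\{\alpha_{k-1,\ell-1}, I_{k,\ell}, J_{k,\ell}\}.
\]

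To verify this I would split on the value of the left-hand side. When it is $0$, all three forward inputs vanish, so $I_{k,\ell} = J_{k,\ell} = 0$; the triple lies in the complement of $\mathcal B_{k,\ell}$ and $\alpha_{k-1,\ell-1} = 0$ by \eqref{eq:5}, so both sides are $0$. When the left-hand side is $1$, either $I_{k,\ell}$ or $J_{k,\ell}$ already equals $1$ and the right-hand side is automatically $1$, or else $I_{k,\ell} = J_{k,\ell} = 0$; in the latter case the forward recursion with maximum equal to $1$ forces $I_{k,\ell-1} = J_{k-1,\ell} = 1$, so the first indicator in \eqref{eq:5} fires and $\alpha_{k-1,\ell-1} = 1$. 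Hence both sides equal $1$. The analogous identity for $J^*$ is obtained by the symmetric manipulation, interchanging the roles of $I$ and $J$ above.

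The only delicate point, rather than a genuine obstacle, is that $\alpha_{k-1,\ell-1}$ is not a deterministic function of the input triple on the event $\mathcal B_{k,\ell}$, since it involves the auxiliary Bernoulli $\beta_{k-1,\ell-1}$. The case analysis shows however that precisely on $\mathcal B_{k,\ell}$ at least one of $I_{k,\ell}, J_{k,\ell}$ already equals $1$, so the random value of $\beta_{k-1,\ell-1}$ is absorbed into the maximum on the right and the identity holds pathwise regardless. This is exactly the algebraic reason why definition \eqref{eq:5} gives the correct reversed weight $\omega^*$ for which Burke's property (Lemma \ref{burke}) was tailored.
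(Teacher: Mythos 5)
Your proof is correct and follows essentially the same route as the paper: telescoping gives the $G^*$-increment identities, and the max-identity is verified pathwise by a case analysis based on the forward recursion \eqref{eq:4} and the definition \eqref{eq:5} of $\alpha$, including the key observation that on $\mathcal B_{k,\ell}$ the value of $\beta$ is irrelevant because one of the output increments is already $1$. Your symmetric reformulation $\max\{\omega_{k,\ell}, I_{k,\ell-1}, J_{k-1,\ell}\} = \max\{\alpha_{k-1,\ell-1}, I_{k,\ell}, J_{k,\ell}\}$ is just a cleaner packaging of the paper's step showing that a residual maximum vanishes, with the added small benefit that the $J^*$ claim reduces to the identical identity.
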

\begin{proof} 
First note that  
\begin{align*}
 I_{m-i+1, n-j} &= G_{m-i+1,n-j} - G_{m-i, n-j} \\
 &= G_{m-i+1,n-j} - G_{m,n} + G_{m,n} - G_{m-i, n-j} = G^*_{i,j} - G^*_{i-1,j}.
\end{align*}
by \eqref{revproc}.
We also prove the other identity only for the $I^*_{i,j}$ and leave the proof for the second set of equations to the reader.
A direct substitution to the right-hand side gives 
\begin{align*}
 \omega^*_{i,j} &\vee I^*_{i,j-1} \vee J^*_{i-1,j} - J^*_{i-1,j} \\
 &= \alpha_{m-i, n-j}\vee I_{m-i+1, n-j+1} \vee J_{m-i+1, n-j +1} - J_{m-i+1, n-j +1}\\
 &= (\alpha_{m-i, n-j} - J_{m-i+1, n-j +1})\vee (G_{m-i+1,n-j} - G_{m-i,n-j+1}) \vee0\\
 &= (\alpha_{m-i, n-j} - J_{m-i+1, n-j +1})\vee (G_{m-i+1,n-j} \pm G_{m-i, n-j} - G_{m-i,n-j+1}) \vee0\\
 &= (\alpha_{m-i, n-j} - (\omega_{m-i+1, n-j +1}\vee I_{m-i+1, n-j}\vee J_{m-i, n-j +1} - I_{m-i+1, n-j}) )\\
 &\phantom{xxxxxx}\vee(I_{m-i+1, n-j} - J_{m-i,n -j +1})\vee0 \\
 &=I_{m-i+1, n-j} + \Big(  (\alpha_{m-i, n-j} - \omega_{m-i+1, n-j +1}\vee I_{m-i+1, n-j}\vee J_{m-i, n-j +1})\\
 &\phantom{xxxxxxxxxxxxxxxxxxxxxxxxxxxxxxx}\vee(- J_{m-i,n -j +1})\vee (-I_{m-i+1, n-j}) \Big).
\end{align*} 
Focus on the expression in the parenthesis. We will show that it is always 0, and therefore the lemma follows by \eqref{eq:istar}.
We use equations \eqref{eq:4} and \eqref{eq:5}. If $(I_{m-i+1, n-j}, J_{m-i, n -j +1})$ $= (1,1)$ then $\alpha_{m-i, n-j} =1$ and the first maximum is zero. Similarly, when  the triple $(\omega_{m-i+1, n-j +1}, I_{m-i+1, n-j}, J_{m-i, n-j +1}) = (0,0,0)$, $\alpha_{m-i, n-j} =0$ and the value is zero again. When exactly one of $I_{m-i+1, n-j}, J_{m-i, n -j +1}$ is zero the overall maximum in the parenthesis is 0, irrespective of the values of $\alpha_{m-i, n-j}, \omega_{m-i+1, n-j +1}$. Finally, when $\omega_{m-i+1, n-j +1} =1$ and both the increment variables $(I_{m-i+1, n-j}, J_{m-i, n -j +1}) = (0,0)$, the first term is either $0$ or $-1$ and again the overall maximum is zero. 
\end{proof}
Throughout the paper quantities defined in the reversed process will be denoted by a superscript $^*$, and they will always be equal in distribution to their original forward versions.
\subsubsection{Competition interface for the forward process vs maximal path for the  reversed process}
We want to show that the competition interface defined in \eqref{compint} is always below or coincides  (as piecewise linear curves) with the down - most maximal path $\hat{\pi}^*$ for the reversed process. The steps of the competition interface for the forward process coincide with those of $\hat \pi^*$ in all cases, except when $(I_{i,j},J_{i,j},\omega_{i,j}) = (0,1,1)$. In that case, $\hat \pi^*$ will go diagonally up, while $\phi$ will move horizontally. Thus, $\varphi$ is to the right and below $\hat \pi ^*$ as curves.

Now, define
\be \label{eq:phiproj}
\begin{aligned}
v(n)&=\inf\{i:(i,n)=\varphi_k\text{ for some }k\geq0\}\\
w(m)&=\inf\{j:(m,j)=\varphi_k\text{ for some }k\geq0\}
\end{aligned}
\ee
with the convention $\inf\varnothing=\infty$. In words, the point $(v(n),n)$ is the left-most point of the competition interface on the horizontal line $j=n$, while $(m,w(m))$ is the lowest point on the vertical line $i=m$. This observation implies 
\be\label{vwrel}
 v(n)\geq m\implies w(m)< n\qquad\text{or } \quad w(m)\geq n\implies v(n)> m.
\ee
Then, on the event $\{  v(n) \ge m \}$, we know that $\hat \pi^*$ will hit the north boundary of the rectangle at a site $( \ell , n)$ so that 
\[
m - \ell = \xi^*_{e_1}({\hat\pi^*}),\quad  \ell \le v(n).
\]
Then, we have just showed that 

\begin{lemma}\label{revintcon}
Let $\varphi$ be the competition interface constructed for the process $G^{(\lambda)}$ and $\hat \pi^*$ the down-most maximal path for the process $G^{*,(\lambda)}$ defined by \eqref{revproc} from $(m,n)$ to $(0,0)$. Then on the event $\{  v(n) \ge m \}$,
\be\label{xire}
m - v(n) \le \xi^{*(\lambda)}_{e_1}({\hat\pi^*})
\ee
\end{lemma}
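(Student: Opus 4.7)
The plan is to prove this as a direct geometric consequence of the comparison between the competition interface $\varphi$ and the reversed down-most maximal path $\hat\pi^*$ that was developed just before the statement. First, I would make the comparison precise by using Lemma \ref{reversal} to rewrite the backward recursion \eqref{maxpathhatpi} governing $\hat\pi^*$ (started at $(m,n)$) in terms of the forward increments $(I_{i,j},J_{i,j},\omega_{i,j})$. I would then carry out a case analysis over all admissible triples, verifying that the local step selected by $\hat\pi^*$ (viewed in forward coordinates) matches the step selected by $\varphi$ via \eqref{compint}, except in the single configuration $(I_{i,j},J_{i,j},\omega_{i,j})=(0,1,1)$, where $\hat\pi^*$ takes the diagonal step while $\varphi$ takes the horizontal step. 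This is exactly the discrepancy already recorded in the preamble to the lemma.

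Next, I would upgrade this local statement to the global assertion that $\varphi$ lies weakly below and to the right of $\hat\pi^*$ as piecewise linear curves. In terms of horizontal sections, this means that on any line $j=c$, the leftmost $i$-coordinate of $\varphi$ on that line is at least the leftmost $i$-coordinate of $\hat\pi^*$ on that line. The upgrade is an induction that proceeds level by level, starting from the common endpoint and propagating the single-square comparison; the only way the two curves can separate at a given square is the exceptional case above, which moves $\hat\pi^*$ strictly up-left relative to $\varphi$, preserving the ordering.

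Finally, I would specialize to the line $j=n$. Let $\ell=\min\{i:(i,n)\in\hat\pi^*\}$. Since $\hat\pi^*$ starts at $(m,n)$ and can leave the top row only via a $-e_2$ or $-e_1-e_2$ step, the intersection of $\hat\pi^*$ with $\{j=n\}$ is exactly the segment $\{(i,n):\ell\le i\le m\}$, which identifies the exit point of $\hat\pi^*$ from the top boundary as $\xi^{*(\lambda)}_{e_1}(\hat\pi^*)=m-\ell$. On the event $\{v(n)\ge m\}$, the curve $\varphi$ does reach the line $j=n$ (at the point $(v(n),n)$), so the geometric comparison of the previous paragraph applies and yields $\ell\le v(n)$. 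Rearranging,
\[
m-v(n)\le m-\ell=\xi^{*(\lambda)}_{e_1}(\hat\pi^*),
\]
which is precisely \eqref{xire}.

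The only nontrivial ingredient, and therefore the main obstacle, is the single-square case analysis: one must carefully match the rule \eqref{compint} defining $\varphi$ (expressed through $G$-values at corners of a square) against the backward recursion for $\hat\pi^*$ expressed via the forward data $(I,J,\omega)$ through Lemma \ref{reversal}. Once the five Bernoulli configurations in $\mathcal B_{i,j}$ together with $(0,1,1)$ and the all-zero case are checked, the planar-curve ordering and the reduction to the top row are routine.
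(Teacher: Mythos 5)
Your proposal is correct and follows essentially the same route as the paper: a site-by-site comparison of the step rules for $\varphi$ and $\hat\pi^*$ (identical except at $(I,J,\omega)=(0,1,1)$, where $\hat\pi^*$ steps diagonally and $\varphi$ horizontally), which orders the two curves with $\varphi$ below and to the right of $\hat\pi^*$, and then reading the inequality off the top row via $m-\ell=\xi^{*(\lambda)}_{e_1}(\hat\pi^*)$ and $\ell\le v(n)$. The paper's own argument is exactly this (stated even more tersely in the discussion preceding the lemma), so no further comparison is needed.
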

Finally, note that by reversed process definition we have 
\be\label{eqdis}
\xi_{e_1}^{*(\lambda)}\dis\xi_{e_1}^{(\lambda)}.
\ee
\subsection{Last passage time under different boundary conditions}
In our setting the competition interface is important because it bounds the region where the boundary conditions on the axes are felt. For this reason we want to give a Lemma which describes how changes in the boundary conditions are felt by the increments in the interior part.
\begin{lemma}\label{lem:2w}
Given two different weights $\{\om_{i,j}\}$ and $\{\tilde{\om}_{i,j}\}$ which satisfy $\om_{0,0}=\tilde{\om}_{0,0}$, $\om_{0,j}\geq\tilde{\om}_{0,j}$, $\om_{i,0}\leq\tilde{\om}_{i,0}$ and $\om_{i,j}=\tilde{\om}_{i,j}$ for all $i,j\geq1$. Then all increments satisfy $I_{i,j}\leq\tilde{I}_{i,j}$ and $J_{i,j}\geq \tilde{J}_{i,j}$.
\end{lemma}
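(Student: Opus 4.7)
\textbf{Proof plan for Lemma \ref{lem:2w}.} I will prove both inequalities simultaneously by induction on $i+j$, using the explicit recursion \eqref{eq:4} for the increments together with the fact that all environment values (and hence all increments) lie in $\{0,1\}$. Recall that \eqref{eq:4} expresses $I_{i,j},J_{i,j}$ in the bulk purely in terms of $\om_{i,j}$, $I_{i,j-1}$ and $J_{i-1,j}$, so propagating the comparison across the lattice is natural.

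\emph{Base case.} For $i\ge 1$, $I_{i,0}=\om_{i,0}\le\tilde\om_{i,0}=\tilde I_{i,0}$ by the hypothesis on the south boundary, and for $j\ge 1$, $J_{0,j}=\om_{0,j}\ge\tilde\om_{0,j}=\tilde J_{0,j}$ by the hypothesis on the west boundary. These cover every increment that does not arise through the bulk recursion.

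\emph{Inductive step.} Fix $(i,j)$ with $i,j\ge 1$ and assume the conclusion holds for all lattice sites with coordinate sum strictly less than $i+j$. Abbreviate $a=\om_{i,j}=\tilde\om_{i,j}$, $b=J_{i-1,j}$, $\tilde b=\tilde J_{i-1,j}$, $c=I_{i,j-1}$, $\tilde c=\tilde I_{i,j-1}$, so that by induction $b\ge\tilde b$ and $c\le\tilde c$. The recursion \eqref{eq:4} gives
\[
I_{i,j}-\tilde I_{i,j}=\bigl[\max(a,b,c)-\max(a,\tilde b,\tilde c)\bigr]-(b-\tilde b),
\]
\[
J_{i,j}-\tilde J_{i,j}=\bigl[\max(a,b,c)-\max(a,\tilde b,\tilde c)\bigr]-(c-\tilde c).
\]
Hence the desired conclusion is equivalent to the two-sided bound
\[
c-\tilde c \;\le\; \max(a,b,c)-\max(a,\tilde b,\tilde c) \;\le\; b-\tilde b.
\]
Since each of $a,b,c,\tilde b,\tilde c\in\{0,1\}$, this is verified by a short case analysis on the pair $(b,\tilde b)\in\{(0,0),(1,0),(1,1)\}$ (the inductive hypothesis rules out $\tilde b>b$). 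When $b=\tilde b$, the right inequality is $\max(a,b,c)\le\max(a,b,\tilde c)$, immediate from $c\le\tilde c$, and the left inequality is similarly forced because any strict decrease in the maxima is driven by $\tilde c-c$. The case $b=1,\tilde b=0$ is handled analogously, after noting that then $\max(a,b,c)=1$ while $\max(a,\tilde b,\tilde c)=\max(a,\tilde c)\le 1$.

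\emph{Main obstacle.} There is no deep difficulty; the Bernoulli nature of every quantity collapses the algebra to finitely many configurations, and the only care needed is in keeping the casework organized.
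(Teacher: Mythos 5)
Your proof is correct and is in substance the same as the paper's: the paper establishes the lemma by the corner-flipping induction of Lemma \ref{lem:drp-bus} (starting from the down-right path along the axes, where the comparison is the hypothesis on the boundary weights), and the inductive step of that corner flip is exactly the one-step comparison for the recursion \eqref{eq:4} that you verify via the two-sided bound $c-\tilde c \le \max(a,b,c)-\max(a,\tilde b,\tilde c) \le b-\tilde b$. Your induction on $i+j$ is just a different bookkeeping of the same propagation from the axes into the bulk, and your casework, though terse, is easily completed and correct.
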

\begin{proof}
By following the same corner-flipping inductive proof as that of Lemma \ref{lem:drp-bus} one can show that the statement holds for all increments between points in $L_\psi  \cup \mathcal I_\psi$ where $L_\psi$ and  $\mathcal I_\psi$ are respectively defined in \eqref{eq:patheses} and \eqref{eq:interior} for those paths for which $\mathcal I_\psi$  is finite. The base case is when $\mathcal I_\psi$ is empty and the statement follows from the assumption made on the weights $\{\om_{i,j}\}$ and $\{\tilde{\om}_{i,j}\}$ and from the definition of the increments made in \eqref{eq:4}.
\end{proof}

\begin{lemma}\label{bosw}
We are in the settings of Lemma \ref{lem:2w}. Let $G^{\mathcal W=0}$ (resp.$G^{\mathcal S=0}$) be the last passage times of a system where we set $\tilde{\om}_{0,j}=0$ for all $j\geq1$ (resp. $\om_{i,0}=0$) and the paths are allowed to collect weights while on the boundaries. Let $v(n)$ be given by \eqref{eq:phiproj}. 

Then, for $v(n) <  m_1 \le m_2$,
\be\label{comw}
\begin{aligned}
G_{(1,1),(m_2,n)}-G_{(1,1),(m_1,n)}&\leq G^{\mathcal W=0}_{(0,0),(m_2,n)}-G^{\mathcal W=0}_{(0,0),(m_1,n)}\\
&=G_{(0,0),(m_2,n)}-G_{(0,0),(m_1,n)}.
\end{aligned}
\ee
Alternatively, for $0\leq m_1 \le m_2 < v(n)$,
\be\label{coms}
\begin{aligned}
G_{(1,1),(m_2,n)}-G_{(1,1),(m_1,n)}&\geq G^{\mathcal S=0}_{(0,0),(m_2,n)}-G^{\mathcal S=0}_{(0,0),(m_1,n)}\\
&=G_{(0,0),(m_2,n)}-G_{(0,0),(m_1,n)}.
\end{aligned}
\ee
\end{lemma}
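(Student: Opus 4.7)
The proof of \eqref{comw} splits into the right-hand \emph{equality} identifying $G^{\mathcal W=0}$ and $G$, and the left-hand \emph{sub-modular inequality} between bulk and $\mathcal W=0$ increments; \eqref{coms} follows from the symmetric statement with the roles of the two boundaries swapped.

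For the equality, I would use the cluster decomposition introduced before \eqref{eq:phiordering}. Trivially $G^{\mathcal W=0}_{(0,0),(m,n)}\le G_{(0,0),(m,n)}$; for the converse when $m>v(n)$, observe that $v(n)$ is the leftmost $x$-coordinate at which $\varphi$ hits level $n$, so $(m,n)$ lies strictly to the right of $\varphi$ and, by the ordering $\tilde\varphi\le\varphi$, strictly right of the cluster boundary $\tilde\varphi$. Hence $(m,n)\in\mathcal C_\rightarrow$, meaning every maximal path from $(0,0)$ to $(m,n)$ has a horizontal first step and therefore never visits the west boundary $\{(0,j):j\ge 1\}$. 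Zeroing those weights leaves $G_{(0,0),(m,n)}$ unchanged, and subtracting the identity at $m_1$ and $m_2$ yields the right-hand equality in \eqref{comw}.

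For the inequality, let $\xi_1\ge 0$ be the south-boundary exit point of the right-most maximal path attaining $G^{\mathcal W=0}_{(0,0),(m_1,n)}$, with $\xi_1=0$ encoding a direct diagonal first step from the origin and $(\xi_1,1):=(1,1)$ in that case. Denoting the corresponding boundary contribution by $S(\xi_1)$,
\[
G^{\mathcal W=0}_{(0,0),(m_1,n)}=S(\xi_1)+G_{(\xi_1\vee 1,\,1),(m_1,n)},\qquad G^{\mathcal W=0}_{(0,0),(m_2,n)}\ge S(\xi_1)+G_{(\xi_1\vee 1,\,1),(m_2,n)},
\]
the second bound obtained by reusing the (possibly suboptimal) exit $\xi_1$ in the larger rectangle. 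Subtracting cancels $S(\xi_1)$ and reduces the required inequality to
\[
G_{(1,1),(m_1,n)}+G_{(\xi_1\vee 1,\,1),(m_2,n)}\ge G_{(1,1),(m_2,n)}+G_{(\xi_1\vee 1,\,1),(m_1,n)},
\]
the standard super-additivity of LPP in nested rectangles. To prove it, let $\tau$ be the right-most maximal path from $(1,1)$ to $(m_2,n)$ and $\sigma$ the right-most maximal path from $(\xi_1\vee 1,\,1)$ to $(m_1,n)$: $\tau$ starts at abscissa $1\le\xi_1\vee 1$ and ends at $m_2\ge m_1$, so $\tau$ starts left of (or at) $\sigma$ but ends right of (or at) $\sigma$. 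Since the abscissa range of each path at every horizontal level is a contiguous interval of integers, there is a smallest level $h^\ast$ at which the two ranges overlap; at $h^\ast$ the leftmost point of $\sigma$ lies inside the range of $\tau$, producing a shared lattice site. Swapping the tails of $\tau$ and $\sigma$ at this site produces admissible paths from $(1,1)$ to $(m_1,n)$ and from $(\xi_1\vee 1,\,1)$ to $(m_2,n)$ whose combined weight equals $w(\tau)+w(\sigma)$, yielding the super-additive bound.

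The proof of \eqref{coms} is verbatim the symmetric statement: for $m<v(n)$ the right-most maximal path from $(0,0)$ to $(m,n)$ uses only the west boundary, giving $G^{\mathcal S=0}_{(0,0),(m,n)}=G_{(0,0),(m,n)}$, and the same crossing argument with boundaries interchanged yields the reversed sub-modular bound. The main technical delicacy is the planar crossing claim: with the diagonal step $e_1+e_2$ available, two coordinate-monotone paths could in principle leapfrog each other via simultaneous diagonals without sharing a site, and ruling this out requires the observation above that each path's abscissa range at each horizontal level is a contiguous block of integers, combined with a short case analysis of how $e_2$ versus $e_1+e_2$ steps shift this block from $h^\ast-1$ to $h^\ast$. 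The boundary case $\xi_1=0$ also must be handled separately to validate the passage-time decomposition when the optimal first step is a direct diagonal from the origin.
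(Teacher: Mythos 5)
Your proof of the \emph{equality} in \eqref{comw} rests on an inverted geometric implication. You argue that $m>v(n)$ (so $(m,n)$ lies strictly to the right of $\varphi$ at level $n$) forces, ``by the ordering $\tilde\varphi\le\varphi$,'' that $(m,n)$ lies below/right of $\tilde\varphi$ and hence in $\mathcal C_{\rightarrow}$. The ordering \eqref{eq:phiordering} says that $\varphi$ runs weakly \emph{above and to the left} of $\tilde\varphi$, so the containment goes the other way: being to the left of $\varphi$ forces being to the left of (above) $\tilde\varphi$ --- which is exactly what proves the $\mathcal S=0$ statement \eqref{coms} for $m<v(n)$, and is the paper's argument --- whereas a site with $m>v(n)$ may well lie \emph{between} the two curves, hence above $\tilde\varphi$ and in $\mathcal C_{\uparrow,\nearrow}$. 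For such a site you cannot conclude that every (or even some) maximal path avoids the west boundary, so the identity $G^{\mathcal W=0}_{(0,0),(m,n)}=G_{(0,0),(m,n)}$ is not established and the right-hand equality in \eqref{comw} is unproven. Note the two halves are genuinely not ``verbatim symmetric'': $\tilde\varphi$ is by construction the boundary of the horizontal-first-step cluster and lies on one specific side of $\varphi$, so the $\mathcal W=0$ case requires a separate argument on the \emph{other} side of $\varphi$, not a reuse of $\tilde\varphi$.

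For the first inequality in \eqref{comw} your route (exit-point decomposition plus a path-crossing/quadrangle inequality) is different from the paper's, which obtains it in one line from the coupling Lemma \ref{lem:2w}: the horizontal increments at level $n$ are monotone in the boundary weights, and summing them over $i=m_1+1,\dots,m_2$ gives \eqref{comw} with no case analysis. Your crossing lemma itself is fine in outline (the contiguous-abscissa-interval argument does rule out two paths leapfrogging through simultaneous diagonal steps), but the reduction has a hole: in the $\mathcal W=0$ system a path may still run up the west axis (collecting zeros) and enter the bulk at $(1,k)$, and --- by the boundary convention \eqref{eq:varform}, equivalently because that entry step is diagonal from $(0,k-1)$ --- it then collects $\omega_{1,k}$. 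Such a path can strictly beat every path covered by your dichotomy ``south exit $\xi_1\ge1$ or diagonal first step from the origin'' (take $m_1=2$, all south weights $0$, $\omega_{1,k}=1$ and some $\omega_{2,j}=1$ with $j>k$), so the asserted identity $G^{\mathcal W=0}_{(0,0),(m_1,n)}=S(\xi_1)+G_{(\xi_1\vee1,\,1),(m_1,n)}$ can fail. This is repairable --- handle the west-entry case by the same crossing argument with vertically ordered start points, and fix the off-by-one (after a south exit at $\xi_1\ge1$ the continuation starts at $(\xi_1+1,1)$) --- but as written the decomposition step is incomplete, and in any case the monotone-increment route of Lemma \ref{lem:2w} sidesteps all of it.
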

\begin{proof}
We prove \eqref{coms} and similar arguments prove \eqref{comw}.
The first inequality in \eqref{coms} follows from Lemma \ref{lem:2w} in the case $ \tilde \om_{0,j}= \tilde \om_{i,0}=0$. The subsequent equality comes from the fact that if $v(n)\ge m_1 \ge m_2$. By \eqref{eq:phiproj} the target points $(m_1,n)$ and $(m_2,n)$ are above the competition interface $\varphi$ and therefore, by \eqref{eq:phiordering} are strictly above $\tilde\varphi$. This implies that $(m_1, n)$ and $(m_2, n)$ belong to the cluster $\mathcal C_{\uparrow, \nearrow}$ and therefore we can choose the respective maximal paths to not take a horizontal first step. In turn, the maximal path does not need to go through the $x$-axis and hence it does not see the boundary values $\omega_{i, 0}$. Thus, $G^{\mathcal S=0}_{(0,0),(m,n)}=G_{(0,0),(m_,n)}$.
\end{proof}

\subsection{Lower bound}
In this section we prove the lower bound for the order of the variance. Before giving the proof we need to prove two preliminary lemmas. For the rest of this section, whenever we say maximal path, we mean the down-most maximal path.
\begin{lemma}\label{lem:probban}
Let $a,b>0$ two positive numbers. Then there exist a positive integer $N_0=N(a,b)$ and constant $C=C(a,b)$ such that for all $N>N_0$ we have
\begin{align}\notag
\mathbb{P}\Big\{\sup_{0\leq z\leq aN^{2/3}}\{\sS^{(u)}_z+G_{(z\vee1,1),(m_u(N),n_u(N))}&-G_{(1,1),(m_u(N),n_u(N))}\}\geq bN^{1/3}\Big\}\\
&\phantom{xxxxxxxxxxxxxxx}\leq Ca^3(b^{-3}+b^{-6}). \label{lempstate}
\end{align}
\end{lemma}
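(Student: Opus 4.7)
The plan is to reduce the supremum-type tail to a pointwise Chebyshev moment estimate by combining mean cancellation at the characteristic direction with a boundary-rate coupling.

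First, I would decompose $Y_z := \sS^{(u)}_z + G_{(z\vee 1, 1),(m_u(N),n_u(N))} - G_{(1,1),(m_u(N),n_u(N))}$, observing that $\sS^{(u)}_z$ depends only on the boundary weights and hence is independent of the bulk difference. The very definition of the characteristic direction forces $u$ to attain the infimum in \eqref{eq:busopt} at $(1, n_u(N)/N)$, equivalently $\partial_1 g_{pp}(1, n_u(N)/N) = u$. A Taylor expansion of $g_{pp}$ in the horizontal direction then cancels the linear-in-$z$ term in $\mathbb{E}[G_{(z\vee 1, 1),(m_u,n_u)} - G_{(1,1),(m_u,n_u)}]$ against $\mathbb{E}[\sS^{(u)}_z] = zu$, leaving $\mathbb{E}[Y_z] \le Cz^2/N$, hence $\mathbb{E}[Y_z] \le Ca^2 N^{1/3}$ uniformly on $[0, aN^{2/3}]$---subleading compared with $bN^{1/3}$ in the regime of interest.

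Second, to handle the supremum over $z$, I would introduce a slightly higher-rate boundary $\omega^{(\lambda)}$ with $\lambda = u + \delta$, $\delta \sim N^{-1/3}$, coupled to $\omega^{(u)}$ by common uniforms on the $x$-axis so that $\sS^{(u)}_z \le \sS^{(\lambda)}_z$ pointwise in $z$. The variational identity \eqref{eq:varform} then produces $\sup_{0 \le z \le aN^{2/3}} Y^{(u)}_z \le \sup_z Y^{(\lambda)}_z \le G^{(\lambda)}_{(m_u(N),n_u(N))} - G_{(1,1),(m_u(N),n_u(N))}$, a single random variable. Applying Chebyshev at orders three and six to the centred right-hand side, with moment bounds assembled from $\Var(\sS^{(\lambda)}_{aN^{2/3}}) = O(aN^{2/3})$, the bulk variance of order $N^{2/3}$ coming from Theorem \ref{thm:varub}, and higher-moment analogues of Lemma \ref{lem:varcomp} built on the mutual independence in Corollary \ref{cor:downrightpath}, yields tails of orders $a^{3/2}/b^3$ and $a^3/b^6$; these combine into the desired $Ca^3(b^{-3}+b^{-6})$.

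The principal obstacle is extending the second-moment bound of Theorem \ref{thm:varub} to cubic and sixth central moments in the mildly off-characteristic regime produced by $\lambda \neq u$. Although Lemma \ref{lem:varcomp} contributes a correction $CN(\lambda - u) = O(N^{2/3})$ that is acceptable at the second-moment level, higher central moments must be controlled by an iteration of Burke's property along the coupled boundary staircase in order to isolate the characteristic contribution and avoid the $O(N)$ off-characteristic variance inflation that would otherwise destroy the scaling. If the direct coupling falters in the small-$a$ regime, a parallel route through the reversed-process exit-point bound of Corollary \ref{cor:CC}, transferred to horizontal boundary statistics via Lemma \ref{revintcon}, provides a fallback argument that directly exploits the $r^{-3}$ decay already proven for the exit point.
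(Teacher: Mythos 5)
Your proposal has two structural gaps that would each sink the argument. First, the mean-cancellation step conflates the finite-$N$ expectation $\E[G_{(z\vee1,1),(m_u,n_u)}-G_{(1,1),(m_u,n_u)}]$ with differences of the limit shape $g_{pp}$: a Taylor expansion of $g_{pp}$ says nothing about $\E G$ at finite $N$ unless one already controls $\E G-Ng_{pp}$ uniformly to order $o(N^{1/3})$, which is not available at this point (indeed, bounding $G-Ng_{pp}$ at scale $N^{1/3}$ is the content of Theorem \ref{thm:nbsl}, whose proof uses the present lemma, so this is circular). Second, and more fatally, your reduction of the supremum via $\sup_{z}Y^{(u)}_z\le G^{(\lambda)}_{(m_u,n_u)}-G_{(1,1),(m_u,n_u)}$ with $\lambda=u+\delta$ discards all dependence on $a$: the right-hand side does not shrink as $a\to0$, so no Chebyshev estimate applied to it can produce the factor $a^3$, which is essential downstream (in Lemma \ref{lem:asym} the bound is used precisely with $a=\delta$ small and must vanish). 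On top of that, a tail bound for $G^{(\lambda)}-G_{(1,1)}\ge bN^{1/3}$ would again need $\E[G^{(\lambda)}-G_{(1,1)}]=O(N^{1/3})$ (same circularity) plus third and sixth central moments of passage times, which the paper never establishes; your acknowledged ``principal obstacle'' of upgrading Theorem \ref{thm:varub} and Lemma \ref{lem:varcomp} to cubic and sixth moments is genuinely hard and is not a detail one can defer to ``an iteration of Burke's property.''

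The paper's route avoids both problems. It perturbs the boundary \emph{downward}, $\lambda=u-rN^{-1/3}$ with the key calibration $r=b/(4a)$, and uses Lemma \ref{bosw} together with the competition interface to replace the bulk increments $G_{(1,1),(m,n)}-G_{(1,1),(m-z+1,n)}$ by the stationary increments $\mathcal V^{(\lambda)}_{z-1}$, valid on the event $\{v^{(\lambda)}(n_u(N))>N\}$. The complementary event is bounded by $C(a/b)^3$ through the reversed-process identification of $v^{(\lambda)}$ with an exit point and Corollary \ref{cor:CC} (the ingredients you mention only as a fallback). On the good event, $\sS^{(u)}_{z}-\mathcal V^{(\lambda)}_{z-1}$ is a random walk with i.i.d.\ bounded increments and drift $\approx rzN^{-1/3}\le raN^{1/3}=bN^{1/3}/4$, so Doob's inequality at order $d=6$ applied to the centred martingale gives $Ca^{3}b^{-6}$ as in \eqref{eq:Doob}; no moment bounds on passage times are needed, and the $a$-dependence survives because the walk is only run for $aN^{2/3}$ steps. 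If you want to salvage your outline, you must keep the supremum as a supremum over a walk of length $aN^{2/3}$ (rather than dominating it by a single passage-time difference) and replace the shape-function Taylor expansion by the exact stationary means $\E\mathcal V^{(\lambda)}_{z-1}=(z-1)\lambda$.
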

\begin{proof}
First note that if the supremum in the probability is attained at $z = 0$ then the expression in the braces is tautologically $0$ and the statement of the lemma is vacuously true. Therefore without loss of generality, we can prove the bound for the supremum when $1 \le z \le aN^{2/3}$.
 
Select and fix any parameter $0< r < b/a$ and let $N$ large enough. The exact dependence or $r$ on the parameters $a$ and $b$ will be obtained later in the proof. 
Define $\lambda$ by 
\be\label{lam}
\lambda=u-rN^{-1/3}.
\ee
and use it to define boundary weights on both axes using that parameter and independently of the original boundary weights with parameter $u$. 
The environment in the bulk is the same for both processes.  
Let  $\varphi^{(\lambda)}$ be the competition interface under environment $\omega^{(\lambda)}$ and let $v^{(\lambda)}$ be as in equation \eqref{eq:phiproj}. 
Restrict on the event $v^\lambda(n) > m$. Define the increment $\mathcal V^{(\lambda)}_{z-1}=G^{(\lambda)}_{(0,0),(m,n)}-G^{(\lambda)}_{(0,0), (m-z+1,n)}$. Then use Lemma \ref{bosw} to obtain
\[
G_{(1,1),(m,n)}-G_{(1,1),(m-z+1,n)}\geq \mathcal V^{(\lambda)}_{z-1}.
\]
Recall that $\mathcal  V^{(\lambda)}_{z-1}$ is a sum of i.i.d. Bernoulli$(\lambda)$ variables and it is independent of $\mathscr S^{(u)}_z$. 
When $(m,n)$ equals the characteristic direction  $(m_u(N), n_u(N))$ corresponding to $u$, 
\begin{align}
\mathbb{P}\big\{\sup_{1\leq z\leq aN^{2/3}}\{\sS^{(u)}_z& +G_{(z,1),(m_u(N),n_u(N))}-G_{(1,1),(m_u(N),n_u(N))}\}\geq bN^{1/3}\big\} \notag \\
&\leq\mathbb{P}\Big\{v^{(\lambda)}\Big(\Big\lfloor \frac{N}{p}\big(p + (1-p)u\big)^2\Big\rfloor\Big)\le \lfloor N\rfloor\Big\} \notag \\
&\phantom{xxxxxxxxxxxxxx} +\mathbb{P}\Big\{\sup_{1\leq z\leq aN^{2/3}}\{\sS^{(u)}_z-\mathcal V^{(\lambda)}_{z-1}\}\geq bN^{1/3}\Big\} \notag\\
&\leq\mathbb{P}\Big\{v^{(\lambda)}\Big(\Big\lfloor \frac{N}{p}\big(p + (1-p)u\big)^2\Big\rfloor\Big)\le \lfloor N\rfloor\Big\} \label{plb} \\
&\phantom{xxxxxxxxxxxxxx} +\mathbb{P}\Big\{\sup_{1\leq z\leq aN^{2/3}}\{\sS^{(u)}_{z-1}-\mathcal V^{(\lambda)}_{z-1}\}\geq bN^{1/3}- 1\Big\}. \label{plb1}
\end{align}
We bound the two probabilities separately. We begin with \eqref{plb1}.
Define the martingale as $M_{z-1}=\sS^{(u)}_{z-1}-\mathcal V^{(\lambda)}_{z-1}-\E[\sS^{(u)}_{z-1}-\mathcal V^{(\lambda)}_{z-1}]$, and note that for $1 \le z\leq aN^{2/3}$,
\be\label{expmar}
\E[\sS^{(u)}_{z-1}-\mathcal V^{(\lambda)}_{z-1}]=(z-1)u-(z-1)\lambda\leq raN^{1/3}.
\ee
From \eqref{expmar} follows that 
\[\sS^{(u)}_{z-1}-\mathcal V^{(\lambda)}_{z-1}\leq M_{z-1}+raN^{1/3}.\]
Using this result and taking $N$ large enough so that 
\begin{equation}\label{eq:condonb}
b>ra +N^{-1/3}
\end{equation}
we get by Doob's inequality, for any $d\geq1$.
\begin{align}
\mathbb{P}\big\{\sup_{1\leq z\leq aN^{2/3}}&\{\sS^{(u)}_{z-1}-\mathcal V^{(\lambda)}_{z-1}\} \geq bN^{1/3} -1\big\}\notag\\
&\leq\mathbb{P}\big\{\sup_{1\leq z\leq aN^{2/3}}M_{z-1}\geq N^{1/3}(b-ra-N^{-1/3})\big\}\nonumber\\
&\leq\frac{C(d)N^{-d/3}}{(b-ra-N^{-1/3})^p}\E[|M_{\lfloor aN^{2/3}\rfloor}|^d]\leq\frac{C(d,u)a^{d/2}}{(b-ra-N^{-1/3})^d}.\label{eq:Doob}
\end{align}
Then for $N\geq 4^3b^{-3}$ the above bound is further dominated by 
$
C(d,u)a^{d/2}{(\frac{3b}{4}-ra\big)^{-d}}
$
which becomes $C(d,u)a^3b^{-6}$ once we choose
\begin{equation}\label{eq:r}
r=\frac{b}{4a},
\end{equation}
$d=6$, and properly re-define the  constant $C(d,u)$. This concludes the bound for \eqref{plb1}.

For \eqref{plb}, we rescale $N$ as 
\[
N'=\Big(\frac{p+(1-p)u}{p+(1-p)\lambda}\Big)^2N.
\]
Then we write
\[
\mathbb{P}\Big\{v^{(\lambda)}\Big(\Big\lfloor  \frac{N'}{p}\big(p + (1-p)\lambda\big)^2\Big\rfloor\Big)<\Big\lfloor \Big(\frac{p+(1-p)\lambda}{p+(1-p)u}\Big)^2N'\Big\rfloor\Big\}
\]
Since $u>\lambda$, then
\[
\Big\lfloor \Big(\frac{p+(1-p)\lambda}{p+(1-p)u}\Big)^2N'\Big\rfloor\leq\lfloor N' \rfloor .
\]
Thus, by redefining \eqref{eq:bound} and \eqref{xire} with $N'$ and $\lambda$, we have that the event $v^{(\lambda)}(\lfloor  \frac{N'}{p}(p + (1-p)\lambda)^2\rfloor)<\lfloor (\frac{p+(1-p)\lambda}{p+(1-p)u})^2N'\rfloor$ is equivalent to
\begin{align*}
\xi_{e_1}^{*(\lambda)}(N')&\geq\lfloor N' \rfloor-v^{(\lambda)}\Big(\Big\lfloor  \frac{N'}{p}\big(p + (1-p)\lambda\big)^2\Big\rfloor\Big)\\
&>\lfloor N' \rfloor-\Big\lfloor \Big(\frac{p+(1-p)\lambda}{p+(1-p)u}\Big)^2N'\Big\rfloor.
\end{align*}
By \eqref{eqdis}, we conclude
\begin{align}
\mathbb{P}\Big\{v^{(\lambda)}\Big(\Big\lfloor \frac{N}{p}\big(p + (1-p)u\big)^2\Big\rfloor\Big)&<\lfloor N\rfloor\Big\}\nonumber\\
&=\mathbb{P}\Big\{\xi_{e_1}^{(\lambda)}(N')>\lfloor N' \rfloor-\Big\lfloor \Big(\frac{p+(1-p)\lambda}{p+(1-p)u}\Big)^2N'\Big\rfloor\Big\}.
\end{align}
Utilizing the definitions \eqref{lam} and \eqref{eq:r} of $\lambda$ and $r$, for $N\geq N_0$ there exists a constant $C=C(u)$ such that 
\[\lfloor N' \rfloor-\Big\lfloor \Big(\frac{p+(1-p)\lambda}{p+(1-p)u}\Big)^2N'\Big\rfloor\geq CrN'^{2/3}.\]
Combining this with Corollary \ref{cor:CC} and definition \eqref{eq:r} of $r$ we get the bound 
\begin{align}
\mathbb{P}\Big\{v^{(\lambda)}\Big(\Big\lfloor \frac{N}{p}\big(p + (1-p)u\big)^2\Big\rfloor\Big)<\lfloor N\rfloor\Big\}&\leq\mathbb{P}[\xi_{e_1}^{(\lambda)}(N')>CrN'^{2/3}]\nonumber\\
&\leq Cr^{-3}\leq C(a/b)^3.\label{eq:vresult}
\end{align}
The result now follows.
\end{proof}
 The other Lemma gives an asymptotic limit of the probability order of the exit point from the $x$-axis. We will discuss the exit point from the $y$-axis as a Corollary of this Lemma.
\begin{lemma}\label{lem:asym} Let $u \in (0,1)$ and $(m_u(N), n_u(N))$ the characteristic direction. Then the exit point of a maximal path from $0$ to  $(m_u(N), n_u(N))$ can collect, satisfies  
\[
\lim_{\delta\to0}\varlimsup_{N\to\infty}\mathbb{P}\Big\{0\leq {\xi^{(u)}_{e_1}(N)} \vee {\xi^{(u)}_{e_2}(N)}  \leq\delta N^{2/3}\Big\}=0.
\]
\end{lemma}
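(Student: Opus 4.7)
The plan is to combine Lemma~\ref{lem:probban} with a matching anti-concentration estimate for the \emph{boundary gain}
\[
\Delta_N := G^{(u)}_{m_u(N),n_u(N)} - G_{(1,1),(m_u(N),n_u(N))}.
\]
First, observe that the maximal path exits through at most one of the two axes, so the event
$E_\delta := \{0\le \xi^{(u)}_{e_1}(N) \vee \xi^{(u)}_{e_2}(N)\le\delta N^{2/3}\}$ decomposes as $E_\delta = \mathcal E^H_\delta\cup\mathcal E^V_\delta$, where $\mathcal E^H_\delta=\{0\le\xi^{(u)}_{e_1}(N)\le\delta N^{2/3},\ \xi^{(u)}_{e_2}(N)=0\}$ and $\mathcal E^V_\delta$ is its vertical analogue. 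The exchange symmetry in Burke's property (Lemma~\ref{burke}) -- which swaps the parameters $u$ and $p(1-u)/(u+p(1-u))$ governing the two boundaries -- reduces the vertical case to the horizontal one by a mirrored argument; only the horizontal case is detailed below.

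On $\mathcal E^H_\delta$ the variational representation~\eqref{eq:varform} forces
\[
\Delta_N \le \sup_{0\le z\le \delta N^{2/3}}\bigl\{\mathscr S^{(u)}_z + G_{(z\vee 1,1),(m_u(N),n_u(N))}\bigr\} - G_{(1,1),(m_u(N),n_u(N))}.
\]
Applying Lemma~\ref{lem:probban} with $a=\delta$, and combining with the symmetric bound on the vertical axis, yields for every $b>0$
\begin{equation}\label{eq:plankey}
\P(E_\delta) \le \P\{\Delta_N < bN^{1/3}\} + C\delta^{3}\bigl(b^{-3}+b^{-6}\bigr).
\end{equation}

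The core remaining task is to establish the anti-concentration statement
\begin{equation}\label{eq:planac}
\lim_{b\to 0^+}\varlimsup_{N\to\infty}\P\{\Delta_N < bN^{1/3}\} = 0,
\end{equation}
after which~\eqref{eq:planac} plugged into~\eqref{eq:plankey}, by letting first $N\to\infty$, then $b\to 0^+$, and finally $\delta\to 0^+$, concludes the lemma. The plan for~\eqref{eq:planac} is to exhibit for each small $b>0$ an auxiliary scale $a=a(b)>0$ such that the deterministic lower bound
\[
\Delta_N \ge \mathscr S^{(u)}_{\lfloor aN^{2/3}\rfloor} + G_{(\lfloor aN^{2/3}\rfloor,1),(m_u(N),n_u(N))} - G_{(1,1),(m_u(N),n_u(N))}
\]
already exceeds $bN^{1/3}$ with probability tending to one. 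A second-order Taylor expansion of the shape function $g_{pp}$ at the characteristic point $(1,t_0)$, with $t_0=(u+p(1-u))^2/p$, shows that the mean of the right-hand side equals $-\kappa\, a^2 N^{1/3}+o(N^{1/3})$, where $\kappa=-\tfrac12 t_0^2\,\partial_t^2 g_{pp}(1,t_0)>0$ by strict concavity in the characteristic cone. The boundary sum $\mathscr S^{(u)}_{\lfloor aN^{2/3}\rfloor}$ is independent of the bulk difference $G_{(1,1),(m_u(N),n_u(N))} - G_{(\lfloor aN^{2/3}\rfloor,1),(m_u(N),n_u(N))}$, whose variance is controlled at scale $N^{2/3}$ by Theorem~\ref{thm:nbsl}. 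A Paley--Zygmund estimate applied to the centred sum, with $a=a(b)\to 0$ chosen so that $\kappa\,a(b)^2\ll b$, then produces~\eqref{eq:planac}.

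The main obstacle is the quantitative anti-concentration estimate~\eqref{eq:planac}: the curvature correction $-\kappa a^2 N^{1/3}$ and the independent-sum fluctuations at scale $N^{1/3}$ must be balanced with care through the choice of $a(b)$, and this relies essentially on the moment bounds of Theorem~\ref{thm:nbsl} together with the strict positivity of $\kappa$ inside the macroscopic cone $\frak J_p$.
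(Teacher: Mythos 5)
Your first half is sound and matches the paper's skeleton: on the event $\{0\le\xi^{(u)}_{e_1}(N)\le\delta N^{2/3}\}$ the boundary gain $\Delta_N$ is dominated by the small-$z$ supremum, so Lemma \ref{lem:probban} with $a=\delta$ gives exactly the term $C\delta^3(b^{-3}+b^{-6})$, and the vertical case follows by symmetry. The genuine gap is in your anti-concentration step \eqref{eq:planac}. Fixing a single deterministic exit point $z=\lfloor aN^{2/3}\rfloor$ cannot give ``probability tending to one'': by the characteristic choice of direction the first-order terms cancel and the mean of $\mathscr S^{(u)}_{\lfloor aN^{2/3}\rfloor}+G_{(\lfloor aN^{2/3}\rfloor,1),\cdot}-G_{(1,1),\cdot}$ is \emph{negative}, of order $-\kappa a^2N^{1/3}$, while its fluctuations are of the same order $N^{1/3}$ (the boundary sum alone fluctuates at scale $\sqrt a\,N^{1/3}$). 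Hence the probability that this quantity exceeds $+bN^{1/3}$ converges to a Gaussian-type probability strictly below $1/2$, for every choice of $a=a(b)$; Paley--Zygmund (which applies to nonnegative variables and only yields constant lower bounds) cannot repair this, so at best you get $\varlimsup_N\P\{\Delta_N<bN^{1/3}\}\ge 1/2$, not $\to0$. Two further problems: the Taylor-expansion claim requires $\E G_{(1,1),\cdot}$ to agree with $Ng_{pp}$ up to $o(N^{1/3})$, whereas Theorem \ref{thm:nbsl} only gives $O(N^{1/3})$ control, i.e.\ an error of exactly the scale you are trying to resolve; and invoking Theorem \ref{thm:nbsl} here is delicate because that theorem sits downstream of Theorem \ref{thm:varub}, whose lower bound is proved \emph{using} the present lemma, so you must check you are not importing a circular dependency.

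The paper avoids all of this by never working with a single $z$ and never touching the bulk expectations directly. It bounds $\P\{0\le\xi^{(u)}_{e_1}(N)\le\delta N^{2/3}\}$ by the probability that the supremum over the \emph{macroscopic window} $\delta N^{2/3}<x\le N^{2/3}$ of $\mathscr S^{(u)}_x+G_{(x,1),\cdot}-G_{(1,1),\cdot}$ stays below $bN^{1/3}$ (plus the Lemma \ref{lem:probban} term you already have). To handle that supremum it introduces a tilted boundary parameter $\lambda=u+rN^{-1/3}$ and uses Lemma \ref{bosw} to replace the intractable bulk increments $G_{(x,1),\cdot}-G_{(1,1),\cdot}$ by boundary increments $-\mathcal V^{(\lambda)}_x$ of the stationary model, on a high-probability event controlled through the competition interface, the reversal Lemma \ref{revintcon}, and Corollary \ref{cor:CC}. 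Then $\mathscr S^{(u)}_x-\mathcal V^{(\lambda)}_x$ is a random walk with drift $-rN^{-1/3}$, and Donsker scaling turns the supremum into $\sup_{\delta\le y\le1}\{\sigma(u)\mathfrak B(y)-ry\}$, which is positive with probability close to $1$ as $\delta\to0$; choosing $b$ small makes the offending probability at most $\eta$. It is precisely this ``supremum over all mesoscopic exit points'' plus the stationary-increment comparison that your single-scale Paley--Zygmund plan is missing, and without it the argument does not close.
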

 \begin{proof} We only show the result for ${\xi^{(u)}_{e_1}(N)}$. The same result for  ${\xi^{(u)}_{e_2}(N)}$ follows by interchanging vertical and horizontal directions and the fact that both boundaries have Bernoulli variables. 
  
First pick a parameter $\delta > 0$. 
Recall that $\xi^{(u)}_{e_1}(N)=0$ if the down-most maximal path makes the first step diagonally or up. Also keep in mind that $\xi^{(u)}_{e_1}(N)=0$ is the right-most possible exit point, therefore all paths that exit later, have to have a obtain smaller passage time.
Then, we may bound 
\begin{align*}
\mathbb{P}\{0\leq\xi^{(u)}_{e_1}(N)\leq\delta N^{2/3}\}&\leq\mathbb{P}\big\{\sup_{\delta N^{2/3}<x\leq N^{2/3}}\{\sS^{(u)}_{x}+G_{(x,1),(m_u(N),n_u(N))}\}\\
&\hspace{3cm}<\sup_{0\leq x\leq \delta N^{2/3}}\{\sS^{(u)}_{x}+G_{(x\vee1,1),(m_u(N),n_u(N))}\}\big\}.
\end{align*}
Then, we subtract the term $G_{(1,1),(m(N),n(N))}$ from both sides and we bound the resulting probability from above by
\begin{align}
&\mathbb{P}\big\{\sup_{\delta N^{2/3}<x\leq N^{2/3}}\{\sS^{(u)}_{x}+G_{(x,1),m_u(N),n_u(N))}-G_{(1,1),(m_u(N),n_u(N))}\}\nonumber\\
&\hspace{4cm}<\sup_{0\leq x\leq \delta N^{2/3}}\{\sS^{(u)}_{x}+G_{(x\vee1,1),(m_u(N),n_u(N))}-G_{(1,1),(m_u(N),n_u(N))}\}\big\}\nonumber\\
&\hspace{1.5cm}\leq\mathbb{P}\big\{\sup_{\delta N^{2/3}<x\leq N^{2/3}}\{\sS^{(u)}_{x}+G_{(x,1),(m_u(N),n_u(N))}-G_{(1,1),(m_u(N),n_u(N))}\}<bN^{1/3}\big\}\label{fte}\\
&\hspace{1.5cm}+\mathbb{P}\big\{\sup_{0\leq x\leq \delta N^{2/3}}\{\sS^{(u)}_{x}+G_{(x\vee1,1),(m_u(N),n_u(N))}-G_{(1,1),(m_u(N),n_u(N))}\}>bN^{1/3}\big\}.\label{ste}
\end{align}
\eqref{ste} is bounded from above using Lemma \ref{lem:probban} by $C\delta^3(b^{-3}+b^{-6})$. 

To bound \eqref{fte} we use similar arguments that we employed in the proof of Lemma \ref{lem:probban}. Define an auxiliary parameter $\lambda$  
\be \label{lamu}
\lambda=u+rN^{-1/3},
\ee
where conditions on $r$ will be specified in the course of the proof.
From Lemma \ref{bosw} the following inequalities hold
\begin{align*}
G_{(x,1),(m_u(N),n_u(N))}-G_{(1,1),(m_u(N),n_u(N))}&\geq G^{(\lambda)}_{(0,0),(m_u(N),n_u(N))}-G^{(\lambda)}_{(0,0), (m_u(N)-x+1,n_u(N))}\\
&=-\mathcal V^{(\lambda)}_{x-1}\geq-\mathcal V^{(\lambda)}_{x}.
\end{align*}
whenever $v^{(\lambda)}\Big(\Big\lfloor \frac{N}{p}\big(p + (1-p)u\big)^2\Big\rfloor\Big)\leq\lfloor N\rfloor-x.$
Using these, we have
\begin{align}
&\mathbb{P}\big\{\sup_{\delta N^{2/3}<x\leq N^{2/3}}\{\sS^{(u)}_{x}+G_{(x,1),(m_u(N),n_u(N))}-G_{(1,1),(m_u(N),n_u(N))}\}<bN^{1/3}\big\}\nonumber\\
&\hspace{2cm}\leq\mathbb{P}\big\{v^{(\lambda)}\Big(\Big\lfloor \frac{N}{p}\big(p + (1-p)u\big)^2\Big\rfloor\Big)>\lfloor N\rfloor-N^{2/3}\big\}\label{2fte}\\
&\hspace{5cm}+\mathbb{P}\big\{\sup_{\delta N^{2/3}<x\leq N^{2/3}}\{\sS^{(u)}_{x}-\mathcal V^{(\lambda)}_{x}\}<bN^{1/3}\big\}.\label{2ste}
\end{align}
We claim that, for $\eta>0$ and parameter $r$, it is possible to fix $\delta,b>0$ small enough so that, for some $N_0<\infty$, the probability in \eqref{2ste} satisfies
\begin{equation}\label{eq:p5}
\mathbb{P}\big\{\sup_{\delta N^{2/3}<x\leq N^{2/3}}\{\sS^{(u)}_{x}-\mathcal V^{(\lambda)}_{x}\}<bN^{1/3}\big\}\leq\eta\qquad\text{for all }N\geq N_0.
\end{equation}

In order to prove this, we use a scaling argument: Uniformly over $y\in[\delta,1]$ as $N\to\infty$,
\begin{align*}
N^{-1/3}\E[\sS^{(u)}_{\lfloor yN^{2/3}\rfloor}-\mathcal V^{(\lambda)}_{\lfloor yN^{2/3}\rfloor}]=N^{-1/3}(\lfloor yN^{2/3}\rfloor u-\lfloor yN^{2/3}\rfloor(u+rN^{-1/3}))\to -ry
\end{align*}
and
\begin{align*}
N^{-2/3}\Var(\sS^{(u)}_{\lfloor yN^{2/3}\rfloor}-\mathcal V^{(\lambda)}_{\lfloor yN^{2/3}\rfloor})&= y\bigg(\sqrt{u(1-u)}+\sqrt{(u+rN^{-1/3})(1-u-rN^{-1/3})}\bigg)^2\\
&\to4u(1-u)y=\sigma^2(u)y 
\end{align*}
Since we are scaling the supremum of a random walk with bounded increments, the probability \eqref{eq:p5} converges as $N\to\infty$, to
\[
\mathbb{P}\big\{\sup_{\delta\leq y\leq1}\{\sigma(u)\mathfrak{B}(y)-ry\}\leq b\big\}
\]
where $\mathfrak{B}(\cdot)$ is a standard Brownian motion. The random variable
\[\sup_{\delta \leq y\leq1}\{\sigma(u)\mathfrak{B}(y)-ry\}\]
is positive almost surely when $\delta$ is sufficiently small. Therefore, the above probability is less than $\eta/2$ for a suitably small $b$. This implies \eqref{eq:p5}.

Finally we bound \eqref{2fte}. Using \eqref{vwrel} and the transpose environment $\tilde \om_{i,j} = \omega_{j, i}$ for $i,j \ge 0$ under the measure $\widetilde \P$
\begin{align}
&\mathbb{P}\Big\{v^{(\lambda)}\Big(\Big\lfloor \frac{N}{p}\big(p + (1-p)u\big)^2\Big\rfloor\Big)> \lfloor N-N^{2/3}-1\rfloor\Big\}\notag\\
&\hspace{2cm}\leq  \widetilde{\mathbb{P}}\Big\{v^{\big(\frac{p(1-\lambda)}{ \lambda+ p(1-\lambda) }\big)}(\lfloor N-N^{2/3}-1\rfloor)\le \Big\lfloor \frac{N}{p}\big(p + (1-p)u\big)^2\Big\rfloor\Big\}.\label{eq:fc0}
\end{align}
Under measure $\widetilde\P$  the environment is still i.i.d.\ and the only change is the alternation of parameter values on the boundaries. Moreover, in the transposed environment, the new competition interface $\varphi^{(\frac{p(1-\lambda)}{ \lambda+ p(1-\lambda) })}$ constructed using \eqref{compint}, would be above (as a curve) from the transposed competition interface $\varphi^{(\lambda)}$, so it would still exit from the north boundary. (see \ref{fig:whole3}).
From \eqref{lamu} substitute $u$ as a function  of $\lambda$,
\begin{figure}%
\begin{subfigure}[t]{0.5\linewidth}
\centering
\begin{tikzpicture}[>=latex, scale=0.4]
\draw [fill=sussexg!20,sussexg!20] (0,0)--(0,1)--(1,1)--(1,2)--(2,3)--(3,3)--(3,4)--(4,5)--(6,5)--(7,6)--(7,7)--(9,7)--(10,8)--(10,9)--(0,9)--(0,0);
\draw[->] (-0.2,0) -- (10,0) ;
  \draw[->] (0,-0.2) -- (0,10) ;
\draw (9,0)--(9,9)--(0,9);

\draw[blue, line width=2pt] (0,0)--(0,1)--(1,1)--(1,2)--(2,3)--(3,3)--(3,4)--(4,5)--(6,5)--(7,6)--(7,7)--(9,7)--(9,9)node[xshift=-2.5em,yshift=0.5em,black]{\tiny$(m_u(N),n_u(N))$};
\draw[blue,dashed] (9,7)--(10,8)--(10,9)node[xshift=3.3em,black]{\tiny$(v^{(\lambda)}(n_u(N)),n_u(N))$};
\draw (5,3)node[xshift=0.5em,yshift=3em,black]{$\varphi^{(\lambda)}$};
\end{tikzpicture}
\end{subfigure}%
\begin{subfigure}[t]{0.5\linewidth}
\centering
\begin{tikzpicture}[>=latex, scale=0.4]
\draw [fill=sussexp!20,sussexp!20](0,0)--(0,2)--(1,3)--(2,3)--(3,4)--(3,5)--(4,6)--(4,7)--(5,7)--(5,8)--(6,8)--(6,9)--(7,9)--(8,10)--(9,10)--(9,0)--(0,0);
\draw[->] (-0.2,0) -- (10,0) ;
  \draw[->] (0,-0.2) -- (0,10) ;
\draw (9,0)--(9,9)--(0,9);

\draw[blue, line width=2pt] (0,0)--(1,0)--(1,1)--(2,1)--(3,2)--(3,3)--(4,3)--(5,4)--(5,6)--(6,7)--(7,7)--(7,9)--(9,9)node[xshift=2.5em,black]{\tiny$(\tilde m_u(N),\tilde n_u(N))$};
\draw[sussexg,line width=2 pt] (0,0)--(0,2)--(1,3)--(2,3)--(3,4)--(3,5)--(4,6)--(4,7)--(5,7)--(5,8)--(6,8)--(6,9)--(7,9)--(8,10)--(9,10)node[xshift=5em,black]{\tiny$(v^{(\frac{p(1-\lambda)}{ \lambda+ p(1-\lambda) })}(\tilde n_u(N)),\tilde n_u(N))$};
\draw[nicos-red, line width=2pt] (0,0)--(0,3)--(1,4)--(2,4)--(2,5)--(3,6)--(3,7)--(4,8)--(4,9)--(9,9);
\draw (3,5)node[xshift=2.5em,yshift=-3em,black]{\tiny$\varphi^{(\frac{p(1-\lambda)}{ \lambda+ p(1-\lambda) })}$};
\draw (3,8)node{$\tilde \pi^{*}$};
\draw[->,sussexg](3,5)--(9.7,5)node[xshift=2.5em,yshift=0.5em,black]{\tiny$\tilde\varphi^{(\frac{p(1-\lambda)}{ \lambda+ p(1-\lambda) })}$};
\end{tikzpicture}
\end{subfigure}
\caption{Comparison of various curves in $ \omega_{i,j}$ and  $\tilde \om_{i,j} =\omega_{j, i}$ environments. The thickset blue curve (color online) in the left figure is the competition interface in $\om_{i,j}$ and the reflected curve can be seen in the same color to the right. The green curve is the competition interface in $\tilde \om_{i,j}$ weights which is higher than the reflected $\varphi$ and the red curve is the right-most maximal paths in the reversed $\tilde \om_{i,j}^*$ weights with boundaries on north and east, which is higher than both the other curves.  \label{fig:whole3}}
\end{figure}
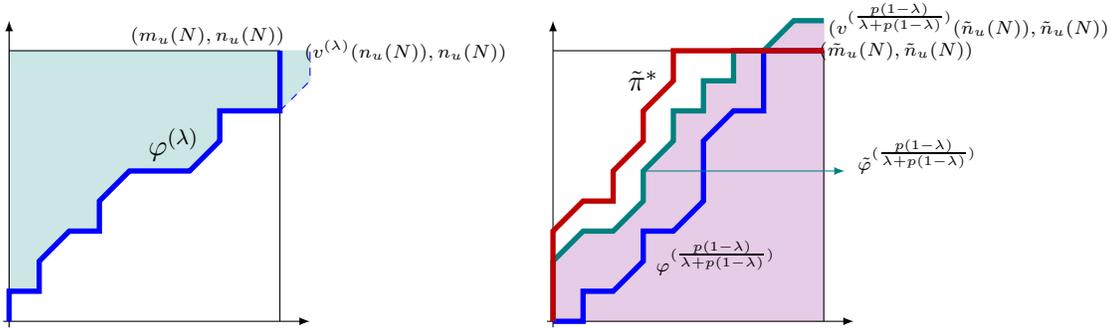

\begin{align*}
\text{Proability in \eqref{eq:fc0}}=& \widetilde{\mathbb{P}}\Big\{v^{\big(\frac{p(1-\lambda)}{ \lambda+ p(1-\lambda) }\big)}(\lfloor N-N^{2/3}-1\rfloor)\\
&\le \Big\lfloor \frac{N}{p}\big(p + (1-p)\lambda\big)^2-\frac{2}{p}(p + (1-p)\lambda)(1-p)rN^{2/3}+o(N^{2/3})\Big\rfloor\Big\}.
\end{align*}
Define $N'$ as  
\[
N'= N-N^{2/3}-1\implies N=N'+N'^{2/3}+o(N'^{2/3}).
\]
Replace $N$ with $N'$ in the probability above to obtain 
\[
\text{Probability in }\eqref{eq:fc0} \leq  \widetilde \P \Big\{v^{\big(\frac{p(1-\lambda)}{ \lambda+ p(1-\lambda) }\big)}(\lfloor N'\rfloor)\le \Big\lfloor\frac{N'}{p}(p + (1-p)\lambda)^2\Big\rfloor-KN'^{2/3}\Big\},
\]
where $K=p^{-1}(p+(1-p)\lambda)(2(1-p)r-(p+(1-p)\lambda))$ which is positive for $r$ large enough.  
Using \eqref{xire} and  \eqref{eqdis}
\begin{align*}
\text{Probability}\eqref{eq:fc0}&=\widetilde \P\big\{\xi_{e_1}^{*\big(\frac{p(1-\lambda)}{ \lambda+ p(1-\lambda) }\big)}(N')>KN'^{2/3}]=\widetilde \P \big\{\xi_{e_1}^{\big(\frac{p(1-\lambda)}{ \lambda+ p(1-\lambda) }\big)}(N')>KN'^{2/3}]\\
&\leq CK^{-3},
\end{align*}
where the last inequality follows from Corollary \ref{cor:CC}.We are now ready to prove the lemma. Start with a fixed $\eta > 0$. Then, fix an $r$ large enough so that  $CK^{-3}<\eta$ and probability \eqref{2fte} is controlled. This also imposes a restriction on the smallest value of $N$ that we can take, since we must have $\lambda < 1$. Under a fixed $r$, we can modulate $\delta,b$ and select them small enough, so that \eqref{eq:p5} holds. Finally, make $\delta$ smaller so that $C\delta^3(b^{-3}+b^{-6})<\eta$ and probability \eqref{ste} is also controlled. Thus, unifying all these results we have
\be\label{eq:Zpiures}
\mathbb{P}\{0\leq\xi^{(u)}_{e_1}(N)\leq\delta N^{2/3}\}\leq2\eta.
\ee
Note that by shrinking $\delta$ while $b$ remains fixed, \eqref{eq:p5} is reinforced.
%
This concludes the proof of the lemma.
\end{proof}

%
%
%
%
%
%
%

\begin{proof}[Proof of Theorem \ref{thm:varub}, lower bound] 
We first claim that
\be\label{eq:b-om}
\mathcal{A}_{\cN^{(u)}} = \E\Big( \xi - \sum_{i = 1}^{\xi}\om_{i,0} \Big) = \E\Big(\sum_{i = 1}^{\xi}(1 - \om_{i,0} )\Big).
\ee
Under this claim,  we can write 
\begin{align*}
\mathcal{A}_{\cN^{(u)}} &= \E\Big(\sum_{i = 1}^{\xi_{e_1}^{(u)}}(1 - \om_{i,0} )\Big)\\
& \ge  \E\Big(\mathbbm{1}\{ \xi_{e_1}^{(u)}(N) \ge \delta N^{2/3} \}\sum_{i = 1}^{\fl{\delta N^{2/3}}}(1 - \om_{i,0} )\Big)\\
&\ge \alpha N^{2/3} \P\Big\{ \xi_{e_1}^{(u)}(N) \ge \delta N^{2/3},\,\, \sum_{i = 1}^{\fl{\delta N^{2/3}}}(1 - \om_{i,0} ) \ge \alpha N^{2/3} \Big\}.
\end{align*}
Fix an $\eta$ positive and smaller than $1/4$. Now, by making $\delta$ sufficiently small, we can make the event $\{ \xi_{e_1}^{(u)}(N) \ge \delta N^{2/3} \}$ have probability larger than $1 -\eta$ by Lemma \ref{lem:asym}, for $N$ sufficiently large. With $\delta$ fixed, we can make $\alpha$ smaller, so that the event $\Big\{\sum_{i = 1}^{\fl{\delta N^{2/3}}}(1 - \om_{i,0} ) \ge \alpha N^{2/3} \Big\}$ also has probability larger than $1 -\eta$. Therefore their intersection has probability greater than  $1 - 2\eta$. 

By Proposition \ref{lem:exit} and the fact that we are in a characteristic direction, the result follows.

It now remains to verify  \eqref{eq:b-om}. Using the fact that 
\[ 
\mathscr H^{(\e)}_{i,0}\vee \om_{i,0} - \om_{i,0} = \mathscr H^{(\e)}_{i,0} -  \mathscr H^{(\e)}_{i,0} \om_{i,0}, 
\]
we write using \eqref{eq:exn-n}
\begin{align*}
\E_{\P\otimes\mu_\e}(\mathcal N^{u_\e} - \mathcal N^{(u)} ) &\ge \E_{\P\otimes\mu_\e}\Big( \mathscr S^{u_\e}_{\xi^{(u)}_{e_1}} -  \mathscr S^{(u)}_{\xi^{(u)}_{e_1}}\Big) = \E_{\P\otimes\mu_\e}\Big( \sum_{i=1}^{\xi^{(u)}_{e_1}} \mathscr H^{(\e)}_{i,0} -  \mathscr H^{(\e)}_{i,0} \om_{i,0}\Big)\\
&=\e\E(\xi^{(u)}_{e_1}) -\E_{\P\otimes\mu_\e}\Big( \sum_{i=1}^{\xi^{(u)}_{e_1}}\mathscr H^{(\e)}_{i,0} \om_{i,0}\Big)\\
&= \e\E(\xi^{(u)}_{e_1}) -\E_{\P\otimes\mu_\e}\Big(\sum_{y = 1}^{m_{u}(N)} \sum_{i=1}^{y}\mathscr H^{(\e)}_{i,0} \om_{i,0}\mathbbm1\{ \xi^{(u)}_{e_1}=y\}\Big)\\
&= \e\E(\xi^{(u)}_{e_1}) -\E_{\P\otimes\mu_\e}\Big(\sum_{i = 1}^{m_{u}(N)} \mathscr H^{(\e)}_{i,0} \om_{i,0}\mathbbm1\{ \xi^{(u)}_{e_1} \ge i \}\Big)\\
&= \e\E(\xi^{(u)}_{e_1}) - \e \E\Big(\sum_{i = 1}^{m_{u}(N)} \om_{i,0}\mathbbm1\{ \xi^{(u)}_{e_1} \ge i \}\Big) =  \e\E(\xi^{(u)}_{e_1}) - \e \E\Big(\sum_{i = 1}^{ \xi^{(u)}_{e_1} } \om_{i,0}\Big).
\end{align*}
Combine the expectations and divide by $\e$. Then take a limit as $\e \to 0$ to finish the proof.
\end{proof}

\section{Variance in off-characteristic directions}
\label{sec:offchar}
In this section we want to deduce the central limit theorem for rectangles that do not have characteristic shape. 

\begin{proof}[Proof of Theorem \ref{thrm:voffchar}] We prove the theorem in the case $c<0$, analogue arguments follow for $c>0$. Set $m^*_u(N)=m_u(N)+\fl{c N^\alpha}$. Now, the point $(m^*_u(N), n_u(N)+\fl{c N^\alpha})$ is in the characteristic direction. Thus
\[G^{(u)}_{m_u(N), n_u(N)+\fl{c N^\alpha}}=G^{(u)}_{m^*_u(N), n_u(N)+\fl{c N^\alpha}}+\sum_{i=m^*_u(N)+1}^{m_u(N)}I_{i, n_u(N)+\fl{c N^\alpha}}.\]
Note that the second the term on the right hand side is a sum of $m_u(N)-m_u^*(N)=\fl{c N^\alpha}$  i.i.d Bernoulli distributed with parameter $\lambda$. We center by the mean of each random variable and we indicate them with a bar over the random variable. Multiply both sides by $N^{-\alpha/2}$ to obtain
\[N^{-\alpha/2}\bar G^{(u)}_{m_u(N), n_u(N)+\fl{c N^\alpha}}=N^{-\alpha/2}(\bar G^{(u)}_{m^*_u(N), n_u(N)+\fl{c N^\alpha}}+\sum_{i=m^*_u(N)+1}^{m_u(N)}\bar I_{i, n_u(N)+\fl{c N^\alpha}}).\]
The first term on the right hand side is stochastically $O(N^{1/3-\alpha/2})$. Since $\alpha>2/3$ this term converges to zero in probability. On the other hand the second term satisfies a CLT.
\end{proof}
Note that for any $\lambda \in (0,1)$, for any $\e>0$ the endpoint $(N,pN-\e N)$ (resp. $(N,N/p + \e N)$) will always be the north-east corner of an off-characteristic rectangle - no matter what the value of $\lambda$.  

\section{Variance without boundary} 
\label{sec:noboundary}
In this section we prove some results for the last passage time in the model without boundaries but still with fixed endpoint. We begin reminding the last passage time of the model without boundaries to reach a point in the characteristic direction \eqref{eq:bound} is $G_{(1,1),(m_u(N),n_u(N))}$ and the last passage time of the model with boundaries to reach the same point is $G^{(u)}_{(0,0),(m_\lambda(N),n_\lambda(N))}$. We want to prove another version of Lemma \ref{lem:probban}.
\begin{lemma}\label{lem:bulk}
Fix $0<\alpha<1$. Then there exist a positive integer $N_0=N(b,u)$ and constant $C=C(\alpha,u)$ such that, for all $N\geq N_0$ and $b\geq C_0$ we have
\[\P\{G^{(u)}_{(0,0),(m_\lambda(N),n_\lambda(N))}-G_{(1,1),(m_u(N),n_u(N))}\geq bN^{1/3}\}\leq Cb^{-3\alpha/2}.\]
\end{lemma}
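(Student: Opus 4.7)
The plan is a two-scale decomposition of the event $\{G^{(u)}_{(0,0),\cdot}-G_{(1,1),\cdot}\ge bN^{1/3}\}$ according to whether the exit point from the axes in the boundary model is below or above the scale $aN^{2/3}$, for a parameter $a$ to be chosen later as a suitable power of $b$. First I would use the variational formula \eqref{eq:varform} to write
\[
G^{(u)}_{(0,0),(m_u(N),n_u(N))} - G_{(1,1),(m_u(N),n_u(N))} = \max\bigl\{\mathscr S^{(u)}_{\xi^{(u)}_{e_1}}+G_{(\xi^{(u)}_{e_1}\vee 1,1),\cdot}-G_{(1,1),\cdot},\ \text{vertical analogue}\bigr\},
\]
with an additional at-most-$\omega_{1,1}\le 1$ contribution from a diagonal first step which is negligible for $N$ large. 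Since $G_{(z,1),\cdot}\le G_{(1,1),\cdot}$ for every $z\ge 1$ by path inclusion, each admissible contribution is bounded above by the corresponding boundary sum, so the exit-point scale naturally controls the whole difference.

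Next, I union-bound
\begin{align*}
\P\{G^{(u)}-G_{(1,1)}\ge bN^{1/3}\} &\le \P\{\xi^{(u)}_{e_1}\vee \xi^{(u)}_{e_2}>aN^{2/3}\}\\
&\phantom{\le}+ \P\Bigl\{\sup_{0\le z\le aN^{2/3}}\bigl\{\mathscr S^{(u)}_z+G_{(z\vee 1,1),\cdot}-G_{(1,1),\cdot}\bigr\}\ge bN^{1/3}\Bigr\}\\
&\phantom{\le}+ (\text{vertical-exit analogue}).
\end{align*}
The first probability is bounded by $Ka^{-3}$ via Corollary \ref{cor:CC} applied to each axis (the statement for $\xi^{(u)}_{e_1}$ follows from the corresponding symmetric version of Proposition \ref{yextibound}, obtained by swapping the two boundaries together with the paired parameter transformation $u\leftrightarrow p(1-u)/(u+p(1-u))$). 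The second and third probabilities are controlled by Lemma \ref{lem:probban} and its vertical counterpart, giving $Ca^3(b^{-3}+b^{-6})$ each.

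Finally I would choose $a=b^{\alpha/2}$ for the prescribed $\alpha\in(0,1)$. The exit-tail terms then become $Kb^{-3\alpha/2}$, while the Lemma \ref{lem:probban} terms become $C(b^{3\alpha/2-3}+b^{3\alpha/2-6})$. Because $\alpha<1$ forces $3\alpha/2-3<-3\alpha/2$, these last two contributions are dominated by $b^{-3\alpha/2}$ once $b\ge 1$, so the total is at most $C(\alpha,u)b^{-3\alpha/2}$ as desired. The lower threshold $b\ge C_0$ is precisely what is needed to validate the hypothesis $b>ra+N^{-1/3}$ of Lemma \ref{lem:probban} with $r=b/(4a)=\tfrac14 b^{1-\alpha/2}$ and to absorb the subdominant $b^{-6}$-term into the constant. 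The main obstacle I foresee is the bookkeeping required to verify the symmetric versions of Corollary \ref{cor:CC} and Lemma \ref{lem:probban} needed for the vertical-exit case; however, these follow from the axis-swap symmetry of the boundary model together with the matching symmetry in the arguments of Section \ref{sec:varub}, and require no new probabilistic input.
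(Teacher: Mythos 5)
Your decomposition is the same as the paper's: split according to whether the exit point exceeds $aN^{2/3}$ (bounded by $Ca^{-3}$ via Corollary \ref{cor:CC}, transposed to the relevant axis) or the supremum over $z\le aN^{2/3}$ of $\sS^{(u)}_z+G_{(z\vee1,1),\cdot}-G_{(1,1),\cdot}$ reaches $bN^{1/3}$, and then take $a=b^{\alpha/2}$; this is exactly the splitting into \eqref{probnb1} and \eqref{probnb2}. The only divergence is in the second term: you invoke Lemma \ref{lem:probban} as a black box with its original choices $r=b/(4a)$, $d=6$, obtaining $Ca^3(b^{-3}+b^{-6})=C(b^{3\alpha/2-3}+b^{3\alpha/2-6})\le Cb^{-3\alpha/2}$ for $b\ge1$, while the paper re-runs the proof of Lemma \ref{lem:probban} with $r=a=b^{\alpha/2}$ and $d=2$ in Doob's inequality, arriving at $Cb^{-3\alpha/2}+Cb^{\alpha/2-2}$, which is again $\le Cb^{-3\alpha/2}$ since $\alpha\le1$. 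Your route is legitimate, but note one caveat: as stated, Lemma \ref{lem:probban} gives constants $C=C(a,b)$ and $N_0=N(a,b)$, so using it with $a=a(b)=b^{\alpha/2}$ and demanding a bound uniform over $b\ge C_0$ requires observing from its proof that $C$ in fact depends only on $u$ and $p$, with the $(a,b)$-dependence entering only through $N_0$; the paper's re-derivation makes this explicit, which is presumably why it does not cite the lemma wholesale. Also, the threshold $b\ge C_0$ is not what validates the hypothesis $b>ra+N^{-1/3}$ in your parametrisation (with $r=b/(4a)$ that condition reads $b>b/4+N^{-1/3}$, i.e.\ a lower bound on $N$, absorbed into $N_0$); it is only needed to absorb the subdominant powers of $b$ into the constant, a harmless misattribution.
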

\begin{proof}
We prove only the case where the maximal path exits from the $x$-axis. Similar arguments hold for the maximal path exits from the $y$-axis and find the same bound.

Note that
\begin{align}
&\P\{G^{(u)}_{(0,0),(m_\lambda(N),n_\lambda(N))}-G_{(1,1),(m_u(N),n_u(N))}\geq bN^{1/3}\}\notag\\
&\hspace{1cm}\leq\P\Big\{\sup_{1\leq z\leq aN^{2/3}}\big\{\sS^{(u)}_z+G_{(z,1),(m_u(N),n_u(N))}-G_{(1,1),(m_u(N),n_u(N))}\big\}\geq bN^{1/3}\Big\}\label{probnb1}\\
&\hspace{3cm}+\P\Big\{\sup_{1\leq z\leq aN^{2/3}}\big\{\sS^{(u)}_z+G_{(z,1),(m_u(N),n_u(N))}\big\}\not= G_{(1,1),(m_u(N),n_u(N))}\Big\}.\label{probnb2}\
\end{align}
For \eqref{probnb2} using \ref{cor:CC}, there exists a $C=C(u)$ such that
\be\label{ppb2}
\begin{aligned}
\P\Big\{\sup_{1\leq z\leq aN^{2/3}}\big\{\sS^{(u)}_z+G_{(z,1),(m_u(N),n_u(N))}\big\}&\not= G_{(1,1),(m_u(N),n_u(N))}\Big\}\\
&\leq\P[\xi_{e_1}^{(u)}(N)\geq aN^{2/3}]\leq Ca^{-3}.
\end{aligned}
\ee

For\eqref{probnb1} we use the results from the proof of Lemma \ref{lem:probban}. Define
\[\lambda=u-rN^{-1/3}\]
From \eqref{plb1} and \eqref{eq:Doob}, where we choose $a=b^{\alpha/2},d=2,$ and $r=b^{\alpha/2}$ we have the upper bound 
\begin{equation}\label{eq:Doobmod}
\mathbb{P}\Big\{\sup_{1\leq z\leq aN^{2/3}}\{\sS^{(u)}_{z-1}-\mathcal V^{(\lambda)}_{z-1}\}\geq bN^{1/3}- 1\Big\}\leq\frac{C(\alpha, u)b^{\alpha/2}}{(b-b^\alpha-N^{-1/3})^2}
\end{equation}
where $C(\alpha,u)>0$ is large enough so that for $b\geq C$ \eqref{eq:condonb} is satisfied and the denominator in \eqref{eq:Doobmod} is at least $b/2$. Then we can claim that for all $b\geq C$ and $N\geq N_0=4^3b^{-3}$

\begin{align*}
\P\Big\{\sup_{1\leq z\leq aN^{2/3}}\big\{\sS^{(u)}_z&+G_{(z,1),(m_u(N),n_u(N))}-G_{(1,1),(m_u(N),n_u(N))}\big\}\geq bN^{1/3}\Big\}\\
&\hspace{1cm}\leq\mathbb{P}\Big\{v^{(\lambda)}\Big(\Big\lfloor \frac{N}{p}\big(p + (1-p)u\big)^2\Big\rfloor\Big)\le \lfloor N\rfloor\Big\}+Cb^{\alpha/2-2}.
\end{align*}
Since $N\geq N_0$ we can use the result \eqref{eq:vresult} and remembering that $r=b^{\alpha/2}$ in this case we obtain
\be\label{ppb1}
\begin{aligned}
\P\Big\{\sup_{1\leq z\leq aN^{2/3}}\big\{\sS^{(u)}_z+G_{(z,1),(m_u(N),n_u(N))}&-G_{(1,1),(m_u(N),n_u(N))}\big\}\geq bN^{1/3}\Big\}\\
&\hspace{2.5cm}\leq Cb^{-3\alpha/2}+Cb^{\alpha/2-2}.
\end{aligned}
\ee
Combining \eqref{ppb1} and \eqref{ppb2} we obtain the final result.
\end{proof}

All the constants which will be defined in this section depend on the values $x,y$ and $p$.

\begin{proof}[Proof of Theorem \ref{thm:nbsl}]
By Chebyshev, Theorem \ref{thm:varub} for the lower bound, Lemma \ref{lem:bulk}
\begin{align*}
\P\{|&G_{(1,1),(\lfloor Nx\rfloor,\lfloor Ny\rfloor)}-Ng_{pp}(x,y)|\geq bN^{1/3}\}\\
&\leq \P\{|G_{(1,1),(m_u(N),n_u(N))}-G^{(u)}_{(0,0),(m_\lambda(N),n_\lambda(N))}|\geq \frac{1}{2}bN^{1/3}\}\\
&\hspace{6cm}+\P[|G^{(u)}_{(0,0),(m_\lambda(N),n_\lambda(N))}-Ng_{pp}(x,y)|\geq\frac{1}{4}bN^{1/3}]\\
&\leq Cb^{-3\alpha/2}+Cb^{-2}\leq Cb^{-3\alpha/2}.
\end{align*}
To get the moment bound, 
\[
\E\bigg[\bigg|\frac{G_{(1,1),(\lfloor Nx\rfloor,\lfloor Ny\rfloor)}-Ng_{pp}(x,y)}{N^{1/3}}\bigg|^r\bigg]=\int_0^\infty\P\bigg[\bigg|\frac{G_{(1,1),(\lfloor Nx\rfloor,\lfloor Ny\rfloor)}-Ng_{pp}(x,y)}{N^{1/3}}\bigg|^r\geq b^r\bigg]db.
\]
At this point using \eqref{eq:ppp} where $b$ in this case is $b^{1/r}$ 
\[\int_0^\infty\P\bigg[\bigg|\frac{G_{(1,1),(\lfloor Nx\rfloor,\lfloor Ny\rfloor)}-Ng_{pp}(x,y)}{N^{1/3}}\bigg|^r\geq b^r\bigg]db\leq C_0 + \int_{C_0}^\infty Cb^{\frac{-3\alpha}{2r}}db<\infty,\]
which converges iff $1\leq r <3\alpha/2$.
\end{proof}

\subsection{Variance in flat-edge directions without boundary} 
 
 We only treat explicitly the case for which $y\leq px$. Since our model is symmetric, the same arguments can be repeated to prove the case $y\geq \frac{1}{p}x$.

%

We force macroscopic distance from the critical line, i.e. we assume that we can find $\e > 0$ so that the sequence of endpoints $(N, n(N))$ satisfy
\be \label{eq:fatbelow}
\varlimsup_{n \to \infty} \frac{ n(N) }{ N } \le p - \e. 
\ee
%

\begin{proof}[Proof of Theorem \ref{thm:flatvar}]
Consider the following naive strategy:  
We construct an approximate maximal path $\pi$ for  $G_{N, n(N)}$, knowing that for large $n(N)< \fl{(p - \e/2)N}$ without using the boundaries. $\pi$ enters immediately inside the bulk and moves right until it finds a weight  to collect diagonally. After that this procedure repeats. For each iteration of this procedure, the horizontal length of this path increases by a random Geometric($1/p$) length, independently of the past. 

The probability  that $\pi$ will take more than $N$ steps before reaching level $n(N)$ is the same as the probability that the sum of $n(N)$ independent $X_i \sim$ Geometric$(1/p)$ r.v.'s exceeds the value $N$ which is a large deviation event. In symbols 
\[
\P\{ G_{N, n(N)}(\pi) < n(N)\} = \P\Big\{ \sum_{i=1}^{n(N)}X_i > N \Big\} \le \P\Big\{ \sum^{\fl{(p - \e/2)N}}_{i=1}X_i > N \Big\} \le e^{-cN}.
\] 
Now, let $\mathcal A = \{  G_{N, n(N)}(\pi) = n(N) \}$. 
\begin{align*}
\text{Var}(G_{N, n(N)}) &= \E(G_{N, n(N)}^2) - (\E(G_{N, n(N)}))^2 \\
&\le (n(N))^2 - (\E(G_{N, n(N)}\mathbbm 1_{\mathcal A})^2 = (n(N))^2 - (n(N))^2 \P\{ \mathcal A\}^2\\
&\le (n(N))^2( 1 - (1 - e^{-cN})^2) \le C N^2 e^{-cN} \to 0. \qedhere
\end{align*}
\end{proof}

\section{Fluctuations of the maximal path in the boundary model}
 \label{sec:paths}
In this last section we prove the path fluctuations in the characteristic direction in the model with boundaries. The idea behind it is to study how long the maximal path spends on any horizontal (or vertical) level and find a bound for the distance between the maximal path and the line which links the starting and the ending point which corresponds to the macroscopic maximal path.

 
 
Fix a boundary parameter $\lambda$ and for this section the characteristic direction  $(m_{\lambda}(N),n_{\lambda}(N))$  is abbreviated by $(m,n)$ and it is the endpoint for the maximal path. Consider two rectangles $\mathcal R_{(k,\ell),(m,n)}\subset\mathcal R_{(0,0),(m,n)}$ with $0<k<m_{\lambda}(N)$ and $0<\ell<n_{\lambda}(N)$. In the smaller rectangle $\mathcal R_{(k,\ell),(m_{\lambda}(N),n_{\lambda}(N))}$ impose boundary conditions on the south and west edges given by the distributions defined in Lemma \ref{burke}.
\begin{equation}\label{eq:newb}
I_{i,\ell}\dis I_{i,0}\quad J_{k,j}\dis J_{0,j}\text{ with } i\in\{k+1,\dots,m\},j\in\{\ell+1,\dots, n\}.
\end{equation}

Recall that  \eqref{v1} and \eqref{v2} define respectively the $i$ coordinate where the maximal path enters and exits from a fixed horizontal level $j$. Since we are interested in studying either the horizontal and vertical fluctuations we also define the $j$ coordinate where the maximal path enters and exits from a fixed vertical level $i$ as
\begin{equation}
w_0(i)=\min\{j\in\{0,\dots,n\}:\exists k\text{ such that } \pi_k=(i,j)\},
\end{equation}
and 
\begin{equation}\label{eq:w1}
w_1(i)=\max\{j\in\{0,\dots,n\}:\exists k\text{ such that } \pi_k=(i,j)\}.
\end{equation}

To make our notation clearer we distinguish the exit point for the path which starts from $(0,0)$ to the one which starts from $(k,\ell)$ adding the superscript $(0,0)$ or $(k,\ell)$. We define the exit point from the south edge of the rectangle $\mathcal R_{(k,\ell),(m,n)}$ as
\begin{equation}
\xi_{e_1}^{(k,\ell)}=\max_{\pi\in\Pi_{(k,\ell),(m,n)}}\{r\geq 0 : (k+i,\ell) \in \pi \text{ for }0\leq i\leq r, \pi \text{ is the right-most maximal}\}.
\end{equation}
Observe from \eqref{eq:newb} that $\xi_{e_1}^{(k,\ell)}$ and $v_1(\ell)-k$ have the same distribution, i.e.
\begin{equation}\label{eq:neweq}
\P\{\xi_{e_1}^{(k,\ell)}=r\}=\P\{v_1(\ell)=k+r\}.
\end{equation}
\begin{proof}[Proof of Theorem \ref{thm:pathflucts}]
Note that if $\tau=0$ \eqref{eq:path1} and \eqref{eq:path2} are already contained in \eqref{eq:Z} and \eqref{eq:Zpiures}.

For $0<\tau<1$ set $v=\lfloor bN^{2/3}\rfloor$ and $(k,\ell)=(\lfloor \tau m\rfloor,\lfloor \tau n\rfloor)$. We add a superscript $\P^{(\cdot,\cdot)}\{\cdot\}$ when we want to emphasise the target point for which we are computing the probability. Remember that the rectangle $\mathcal R_{(k,\ell),(m,n)}$ has boundary condition \eqref{eq:newb}. By Lemma \ref{burke} 
\begin{equation}\label{eq:neweq2}
\begin{aligned}
\P^{(m,n)}\{v_1(\lfloor \tau n\rfloor)\geq\lfloor \tau m\rfloor+v\}&=\P^{(m,n)}\{\xi_{e_1}^{(k,\ell)}\geq v\}, \quad &&\text{ by \eqref{eq:neweq}}\\
&=\P^{(m-k,n-\ell)}\{\xi_{e_1}^{(0,0)}\geq v\}, \quad &&\text{ by \eqref{eq:gradients},  \eqref{eq:newb}} .
\end{aligned}
\end{equation}
Note that $(m-k,n-\ell)$ is still in the characteristic direction since $(m-k,n-\ell)=(1-\tau)(m,n)$. Therefore, from \eqref{eq:neweq2} and Corollary \ref{cor:CC} 
\[\P^{(m,n)}\{v_1(\lfloor\tau n\rfloor)>\tau m+bN^{2/3}\}\leq C_2b^{-3}.\]
To prove the other part of \eqref{eq:path1} notice that
\be\label{imp}
\P^{(m,n)}\{v_0(\lfloor \tau n\rfloor)<\lfloor \tau m\rfloor-v\}\leq \P^{(m,n)}\{w_1(\lfloor \tau m\rfloor-v)\geq \lfloor \tau n\rfloor\}.
\ee
Let $k=\lfloor\tau m\rfloor-v$ and $\ell=\lfloor\tau n\rfloor-\lfloor nv/m\rfloor$. Then, up to integer-part corrections, $k/\ell=m/n$. For a constant $C_{\lambda}>0$ and $N$ sufficiently large , $\lfloor \tau n\rfloor\geq \ell+C_{\lambda}bN^{2/3}$. Note that from \eqref{eq:newb} we can write 
\begin{equation}\label{eq:neweq3}
\P^{(m,n)}\{\xi_{e_2}^{(k,\ell)}=r\}=\P^{(m,n)}\{w_1(k)=\ell+r\}.
\end{equation}
 The vertical analogue of \eqref{eq:neweq2} for $w_1$ is
\begin{align*}
\P^{(m,n)}\{w_1(\lfloor \tau m\rfloor -v)\geq \lfloor \tau n\rfloor\}&=\P^{(m,n)}\{w_1(k)\geq \ell+C_\lambda bN^{2/3}\}\\
&=\P^{(m,n)}\{\xi_{e_2}^{(k,\ell)}\geq C_{\lambda} bN^{2/3}\}\quad&&\text{ by }\eqref{eq:neweq3}\\
&= \P^{(m-k,n-\ell)}\{\xi_{e_2}^{(0,0)}\geq C_{\lambda}bN^{2/3}\}&&\text{ by \eqref{eq:gradients},  \eqref{eq:newb}}.
\end{align*}
Combine this last result with \eqref{imp} and from Corollary \ref{cor:CC} applied to $\xi_{e_2}$ \eqref{eq:path1} follows. 

Finally, we prove \eqref{eq:path2}. We want to compute
\[\P\{\exists \,k\text{ such that } |\hat\pi_k-(\tau m,\tau n)|\leq \delta N^{2/3}\}.\]
If the path $\hat\pi$ comes within $\ell_{\infty}$ distance $\delta N^{2/3}$ of $(\tau m,\tau n)$, then it necessarily enters through the south or west side of the rectangle $\mathcal R_{(k+1,\ell+1),(k+4\lfloor \delta N^{2/3}\rfloor,\ell+ 4\lfloor c\delta N^{2/3}\rfloor)}$ (or via a diagonal step from the south-west corner), where the point $(k,\ell)$ $=(\lfloor\tau m\rfloor-2\lfloor \delta N^{2/3}\rfloor,\lfloor\tau n\rfloor-2\lfloor c\delta N^{2/3}\rfloor$ and the constant $c>m/n$ for large enough $N$. The constant $c$ is there to make the rectangle of characteristic shape.

From the perspective of the rectangle $\mathcal R_{(k,\ell), (m,n)}$ this event is equivalent to either $0\le \xi_{e_1}^{(k,\ell)}\leq 4\delta N^{2/3}$ or $0\le \xi_{e_2}^{(k,\ell)}\leq 4c\delta N^{2/3}$. For these reasons we have
\begin{align*}
\P^{(m,n)}\{\exists k &\text{ such that } |\hat\pi_k-(\tau m,\tau n)|\leq \delta N^{2/3}\}\\
&\leq \P^{(m,n)}\{0<\xi_{e_1}^{(k,\ell)}\leq 4\delta N^{2/3}\text{ or }0<\xi_{e_2}^{(k,\ell)}\leq 4c\delta N^{2/3}\}\\
&= \P^{(m-k,n-l)}\{0\le \xi^{(0,0)}_{e_1}\leq 4\delta N^{2/3}\text{ or }0 \le \xi^{(0,0)}_{e_2}\leq 4c\delta N^{2/3}\}.
\end{align*}
We get the result using equation \eqref{eq:Zpiures} for both exit points.
\end{proof}
  
\bibliographystyle{plain}

\end{document}